\title{Heteroclinic traveling waves of 1D parabolic systems with degenerate stable states}
\author{Ramon Oliver-Bonafoux\thanks{Sorbonne Université, Laboratoire Jacques-Louis Lions. 4 Place Jussieu, 75005 Paris (France). \linebreak Email: \texttt{ramon.oliver\_bonafoux@upmc.fr}}}
\numberwithin{equation}{section}
\def\R{{\mathbb R}}
\def\N{{\mathbb N}}
\def\CA{\mathcal{A}}
\def\CD{\mathcal{D}}
\def\CS{\mathcal{S}}
\def\CC{\mathcal{C}}
\def\CE{\mathcal{E}}
\def\FU{\mathfrak{U}}
\def\FP{\mathfrak{P}}
\def\FM{\mathfrak{M}}
\def\scrH{\mathscr{H}}
\def\scrL{\mathscr{L}}
\def\scrF{\mathscr{F}}
\def\sfC{\mathsf{C}}
\def\sfT{\mathsf{T}}
\def\Fu{\mathfrak{u}}
\def\Fm{\mathfrak{m}}
\def\Fw{\mathfrak{w}}
\def\Fq{\mathfrak{q}}
\def\Fh{\mathfrak{h}}
\def\bV{\mathbf{V}}
\def\bU{\mathbf{U}}
\def\bE{\mathbf{E}}
\def\bW{\mathbf{W}}
\def\be{\mathbf{e}}
\def \bb{\mathbf{b}}
\def\bv{\mathbf{v}}
\def\bu{\mathbf{u}}
\def\sstar{{\star\star}}
\def\eps{\varepsilon}
\def\loc{\mathrm{loc}}
\def\dist{\mathrm{dist}}
\titleclass{\subsubsubsection}{straight}[\subsection]
\newcounter{subsubsubsection}[subsubsection]
\renewcommand\thesubsubsubsection{\thesubsubsection.\arabic{subsubsubsection}}
\renewcommand\theparagraph{\thesubsubsubsection.\arabic{paragraph}} % optional; useful if paragraphs are to be numbered
\renewcommand\paragraph{\@startsection{paragraph}{5}{\z@}%
  {3.25ex \@plus1ex \@minus.2ex}%
  {-1em}%
  {\normalfont\normalsize\bfseries}}
\renewcommand\subparagraph{\@startsection{subparagraph}{6}{\parindent}%
  {3.25ex \@plus1ex \@minus .2ex}%
  {-1em}%
  {\normalfont\normalsize\bfseries}}
\def\toclevel@subsubsubsection{4}
\def\toclevel@paragraph{5}
\def\toclevel@paragraph{6}
\def\l@subsubsubsection{\@dottedtocline{4}{7em}{4em}}
\def\l@paragraph{\@dottedtocline{5}{10em}{5em}}
\def\l@subparagraph{\@dottedtocline{6}{14em}{6em}}
\newtheorem{theorem}{Theorem}\setcounter{theorem}{0}
\newtheorem{definition}{Definition}
\newtheorem{proposition}{Proposition}
\newtheorem{lemma}{Lemma}
\newtheorem{corollary}{Corollary}
\theoremstyle{definition}
\newtheorem{remark}{Remark}
\newtheorem{asu}{}
\newtheorem{hyp}{}
\numberwithin{proposition}{section}
\numberwithin{lemma}{section}
\numberwithin{remark}{section}
\numberwithin{definition}{section}
\numberwithin{example}{section}
\numberwithin{corollary}{section}
\numberwithin{figure}{section}
\begin{document}
\maketitle

\begin{abstract}
We study the existence of traveling waves for the parabolic system
\begin{equation}
\partial_t w - \partial_{x}^2 w = -\nabla_{\bu} W(w) \mbox{ in } [0,+\infty) \times \R
\end{equation}
where $W$ is a potential bounded below and possessing two minima at \textit{different levels}. We say that $\Fw$ is a traveling wave solution of the previous equation if there exist $c^\star>0$ and $\Fu \in \CC^2(\R,\R^k)$ such that $\Fw(t,x)=\Fu(x-ct)$. For a class of potentials $W$, heteroclinic traveling waves of the previous equation where shown to exist by Alikakos and Katzourakis \cite{alikakos-katzourakis}. More precisely, assuming the existence of two local minimizers of $W$ at \textit{different} levels which, in addition, satisfy some non-degeneracy assumptions, the authors in \cite{alikakos-katzourakis} show the existence of a speed $c^\star>0$ and profile $\Fu \in \CC^2(\R,\R^k)$ such that $\Fu$ connects the two local minimizers at infinity.  In this paper, we show that the non-degeneracy assumption on the local minima can be dropped and replaced by another one which allows for potentials possessing degenerate minima. As we do in \cite{oliver-bonafoux-tw}, our main result is in fact proven for curves which take values in a general Hilbert space and the main result is deduced as a particular case, in the spirit of the earlier works by Monteil and Santambrogio \cite{monteil-santambrogio} and Smyrnelis \cite{smyrnelis} devoted to the existence of stationary heteroclinics.
\end{abstract}
\section{Introduction}
We are concerned with the following one-dimensional parabolic system of reaction-diffusion type
\begin{equation}\label{AC_unbalanced}
\partial_t w - \partial_{x}^2 w = -\nabla_{\bu} W(w) \mbox{ in } [0,+\infty) \times \R,
\end{equation}
where $w:[0,+\infty) \times \R \to \R^k$ and $W$ is an \textit{unbalanced} double-well potential with possibly degenerate minima, meaning that we can find two minimum points of $W$ at different levels and we do not assume that $D^2W$ is positive definite at such points. For potentials $W$ which possess two non-degenerate minimum points $a^-, a^+$ in $\R^k$ such that $W(a^-)<0=W(a^+)$ satisfying some additional assumptions (namely, non-degeneracy and local radial monotonicity), Alikakos and Katzourakis \cite{alikakos-katzourakis} (see also Lucia, Muratov and Novaga \cite{lucia-muratov-novaga} and Risler  \cite{risler}) showed the existence of traveling wave solutions for \eqref{AC_unbalanced} with heteroclinic behavior at infinity. More precisely, they showed that there exist $c^\star>0$ and $\Fu \in \CC^2(\R,\R^k)$ such that
\begin{equation}\label{PROFILE_eq}
-c^\star \Fu' - \Fu''=-\nabla_{\bu} W(\Fu) \mbox{ in } \R
\end{equation}
and
\begin{equation}\label{PROFILE}
\lim_{t \to \pm \infty} \Fu(t)=a^\pm.
\end{equation}
Such a thing implies that $\Fw:[0,+\infty) \times \R \to \R^k$ defined as
\begin{equation}
\Fw(t,x)=\Fu(x-ct)
\end{equation}
for $(t,x) \in [0,+\infty) \times \R$ is a solution to \eqref{AC_unbalanced}. More precisely, $\Fw$ is a traveling wave solution of \eqref{AC_unbalanced} which propagates with speed $c^\star$ and with profile $\Fu$. Our main contribution here is to show that such solutions exist when we consider some class potentials $W$ with possibly degenerate and non-isolated minima. More precisely, we assume that $W$ takes the minimum values on sets, instead of isolated points, and that it satisfies suitable properties around such sets. Our motivation comes from a closely related work by the author \cite{oliver-bonafoux-tw}. There, traveling waves solutions for some classes of two dimensional parabolic Allen-Cahn systems are obtained. The approach of the proof is to deduce the result from a more general one, proved in an abstract setting on Hilbert spaces, following ideas that worked for similar problems (see Monteil and Santambrogio \cite{monteil-santambrogio} and Smyrnelis \cite{smyrnelis}). More precisely, one assumes that the potential $W$ is defined in a Hilbert space $\scrH$ which has no restriction in its  dimension, so that in case $\scrH$ is infinite-dimensional and $W$ is suitably chosen, one recovers a system which has more than one dimension in space.  In this paper, we show that the abstract approach used in our previous work can be also used to recover results in the \textit{finite-dimensional setting}. For this we mean that $\scrH=\R^k$, so that the resulting equation is of the type \eqref{AC_unbalanced}, which is 1D on space. These results are not included in \cite{alikakos-katzourakis}. The abstract result of \cite{oliver-bonafoux-tw} applies to our problem, but the main result of this paper follows by a modified version of such abstract result, in which a key assumption in \cite{oliver-bonafoux-tw} is replaced by another one playing the same role. Such an assumption is the natural adaptation of the one used by Alikakos and Katzourakis \cite{alikakos-katzourakis} to the case of degenerate minima, and we use it following the ideas from \cite{alikakos-katzourakis} but with more involved arguments, which is not surprising since the fact that the minima might be degenerate adds extra difficulties. Nevertheless, at present we are only able to apply the abstract result of this paper to the finite-dimensional problem, meaning that we do not use the full strength of the abstract setting, which could be useful in other situations. We also mention that the result we obtain is weaker than that by Alikakos and Katzourakis \cite{alikakos-katzourakis} since we cannot prove convergence of the profile to the global minimum at $-\infty$, only that it stays close to it for small enough times. This might obey to merely technical limitations, but it could also be due to the fact that we allow for degenerate minima. However, we show that under an additional assumption (essentially, either an upper bound $W(a^+)$ or the set of global minimizers is a singleton, see \ref{asu_bc}) one essentially recovers the full result from \cite{alikakos-katzourakis} for our setting.

The scheme of the proof of the abstract result of this paper is analogous to that in \cite{oliver-bonafoux-tw}, which is based on the variational approach of Alikakos and Katzourakis \cite{alikakos-katzourakis}, inspired by that in Alikakos and Fusco \cite{alikakos-fusco}. While the existence of a variational structure in the context of (some) reaction-diffusion problems is known since Fife and McLeod \cite{fife-mcleod77,fife-mcleod81}, it has not been widely used in the previous literature. Besides \cite{alikakos-katzourakis} and our \cite{oliver-bonafoux-tw}, other references which use such a variational structure for studying traveling waves in reaction-diffusion problems are Bouhours and Nadin \cite{bouhours-nadin}, Lucia, Muratov and Novaga \cite{lucia-muratov-novaga04,lucia-muratov-novaga}, Muratov \cite{muratov}, Risler \cite{risler,risler2021-1,risler2021-2} and, more recently, Chen, Chien and Huang \cite{chen-chien-huang}. 

In \cite{alikakos-katzourakis,oliver-bonafoux-tw}, one considers a family of constrained minimization problems and the main difficulty of the problem can be reduced to excluding a degenerate oscillatory behavior for the minimizers of these problems, as the energy density under consideration changes its sign due to the fact that the connected minima are at different levels. In \cite{alikakos-katzourakis}, this is done by imposing a non-degeneracy assumption, as well as radial monotonicity properties, on the local minima and then using the ODE system along with optimality of the minimizers. For several reasons, an assumption of this type was not available to us in \cite{oliver-bonafoux-tw}, so that we needed to replace it by a different one which allowed an analogous type of conclusion. Such an assumption consists essentially on an upper bound on the difference between the energy of the minima, which implies that the energy of the constrained minimizers  to. The abstract result of this paper further replaces the previous assumption by another one which plays the same role (but which does not apply to the problem considered in \cite{oliver-bonafoux}) and still allows for potentials with degenerate minima. More precisely, it allows us to apply to the constrained minimizers an argument based on the use of the ODE system, in the spirit of \cite{alikakos-katzourakis} but more involved in some aspects. The reader is referred to \cite{oliver-bonafoux-tw} for a detailed scheme of proof as well as a discussion on related literature.

In order to conclude this paragraph, we recall that in the realm of \textit{scalar} reaction-diffusion problems one can obtain important results of existence and qualitative properties of traveling waves by means of the maximum principle and comparison results. There is a large literature on the subject, the more classical papers are Fife and McLeod \cite{fife-mcleod77,fife-mcleod81},Aronson and Weinberger \cite{aronson-weinberger}, Berestycki and Nirenberg \cite{berestycki-nirenberg}, all devoted to the Fisher KPP equation (introduced by Fisher \cite{fisher}, Kolmogorov, Petrovsky and Piskunov \cite{kpp}). In the context of the Allen-Cahn equation (which falls in a different framework than the Fisher-KPP), the stability of traveling waves was shown by Matano, Nara and Taniguchi \cite{matano-nara-taniguchi}. However, the maximum and comparison principles are no longer available (in general) for vector-values problems (i. e., systems), so that one needs different tools. There is also a huge amount of research in this direction, we refer to the books by Smoller \cite{smoller} as well as Volpert, Volpert and Volpert \cite{volpertx3}. However, the use of variational methods is not covered in the previously cited books and it has not been extensively used in the parabolic context, which is in contrast with the framework of dispersive equations.

\subsection*{Acknowledgments}

I wish to thank my PhD advisor Fabrice Bethuel for his encouragement and many useful comments and remarks during the elaboration of this paper.

\includegraphics[scale=0.3]{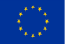}
This program has received funding from the European Union's Horizon 2020 research and innovation programme under the Marie Skłodowska-Curie grant agreement No 754362.
\tableofcontents
\tableofcontents
\section{Statement of the main results}
\subsection{Main assumptions and result}
The main result of this paper is Theorem \ref{THEOREM_W_main}, which establishes the existence of an heteroclinic traveling wave with the uniqueness properties on the speed already found in \cite{alikakos-katzourakis} and exponential convergence at $+\infty$. The behavior at $-\infty$ is however weaker than that proved in \cite{alikakos-katzourakis}, which might be due to the fact that we allow for degenerate minima. The necessary assumptions for Theorem \ref{THEOREM_W_main} are \ref{asu_unbalancedW}, \ref{asu_projection_inverse} and \ref{asu_conv_mon}.  In Theorem \ref{THEOREM_W_bc} we show that one can obtain convergence of the profile at $-\infty$ (so that one recovers the full result from \cite{alikakos-katzourakis} if one assumes either an upper bound on the difference of the energy of the minima or that the set of global minimizers is a singleton, see \ref{asu_bc}. Finally, in Theorem \ref{THEOREM_W_speed} we show that the formula and min-max characterization of the speed given in \cite{alikakos-katzourakis} also holds here, again under assumption \ref{asu_bc}.
\begin{asu}\label{asu_unbalancedW}
$W \in \CC^3(\R^k)$. There exist $\CA^-$ and $\CA^+$ subsets of $\R^k$ and $\Fh <0$ such that for all $a^- \in \CA^-$ we have $W(a^-)=\Fh<0$, for all $a^+ \in \CA^+$ we have $W(a^+)=0$ and for all $u \in \R^k$ we have $W(\bu) \geq \Fh$. There exist $\rho_0^\pm>0$ such that for all $\bu \in \R^k$ such that $\dist(\bu,\CA^\pm) \leq \rho_0^\pm$ it holds
\begin{equation}\label{coercivity_W}
\dist(\bu,\CA^\pm)^2 \leq C^\pm (W(\bu)-\min\{ \pm(-\Fh),0\})
\end{equation}
for some $C^\pm>0$. Moreover, there exists a unique $a^\pm(\bu) \in \CA^\pm$ such that
\begin{equation}
\dist(\bu,\CA^\pm)=\lvert \bu-a^\pm(\bu) \rvert
\end{equation}
and the mappings $p^\pm : \bu \to a^\pm(\bu)$ defined on
\begin{equation}
\CA^\pm_{\rho^\pm}:= \{ \bu \in \R^k: \dist(\bu,\CA^\pm) \leq r_0^\pm \}
\end{equation}
are $C^2$.  Moreover, we have that there exist $c_0>0$ and $R_0>0$ such that for all $\bu \in \R^k$ such that $\lvert \bu \rvert \geq R_0$, we have
\begin{equation}\label{W_R0}
\langle \nabla W(\bu), \bu \rangle \geq c_0 \lvert \bu \rvert^2
\end{equation}
\end{asu}
In particular, the existence of the projection mappings $p^\pm$ holds if  the sets $\CA^\pm$ are convex and smooth. The next assumption writes as follows:
\begin{asu}\label{asu_projection_inverse}
One of the two follows:
\begin{enumerate}
\item $\CA^\pm$ is bounded.
\item For any $(\bu,a^\pm) \in \CA_{\rho_0^\pm}^\pm \times \CA^\pm$, there exist maps $p^\pm_{(\bu,a^\pm)}: \R^k \to \R^k$ such that $p^\pm_{(\bu,a^\pm)}(\bu)=a^\pm$, $W(p^\pm_{(\bu,a^\pm)}(\bu))=W(\bu)$ and $\dist(p^\pm_{(\bu,a^\pm)}(\bu),\CA^\pm)=\dist(\bu,\CA^\pm)$. Moreover, $p^\pm_{(\bu,a^\pm)}$ is differentiable and $\lvert D(p^\pm_{(\bu,a^\pm)})(\bu_1,\bu_2) \rvert = \lvert \bu_2 \rvert$ for any $(\bu_1,\bu_2) \in \left( \R^k\right)^2$.
\end{enumerate}
\end{asu}
In particular, if $W$ is invariant with respect to some group action inside $\CA_{\rho_0^\pm}^\pm$ and $\CA^\pm$ is a single orbit with respect to such an action, then 2. in \ref{asu_projection_inverse} is met. However, less rigid structures are also allowed by 2. Finally, for each $h \in \R$ define the level set
\begin{equation}
W^h:= \{ \bu \in \R^k: W(\bu) \leq h\}
\end{equation}
and notice that by continuity and \ref{asu_unbalancedW} we can find $h_0>0$ such that
\begin{equation}
W^{h_0}=W^{h_0,-} \cup W^{h_0,+}
\end{equation}
with $W^{h_0,\pm}$ closed and disjoint and such that $W^{h_0,+} \subset \CA_{\rho_0^+/2}^+$. In particular
\begin{equation}
W^{h_0,-} \cap \CA_{\rho_0^+/2}^+ = \emptyset.
\end{equation}
For each $h \leq h_0$, we can then write
\begin{equation}
W^{h}=W^{h,-} \cup W^{h,+}
\end{equation}
with $W^{h,\pm}$ closed and disjoint and such that $W^{h,\pm} \subset W^{h_0,\pm}$. It is then clear that
\begin{equation}
W^{0}=W^{0,-} \cup \CA^+, W^{\Fh}=\CA^-
\end{equation}
and for all $h < 0$,
\begin{equation}
W^{h}=W^{h,-}.
\end{equation}
Under these notations, our last assumption writes as follows:
\begin{asu}\label{asu_conv_mon}
For any $h \leq h_0$, the set $W^{h,-}$ is convex. For any $h \leq h_0$, $\bu \in W^{h,-}$ and $a^- \in \CA^-$ define the set
\begin{equation}
I(h,\bu,a^-):= \{ \lambda \in \R: W(a^-+\lambda(\bu-a^-)) > h \mbox{ and } a^-+\lambda(\bu-a^-)\in W^{h_0,-}\}.
\end{equation}
Then, there exists $h_- \in (\Fh,0)$ such that
\begin{enumerate}
\item $W^{h_-}\subset \CA^-_{\rho_0^-/2}$.
\item For some constant $\sigma>0$ it holds that for $\bu \in W^{h_0,-}$, $a^- \in \CA^-$ and $\theta \in I((\Fh+h_-)/2,\bu,a^-)$ we have
\begin{equation}
\frac{d}{d\lambda}(W(a^-+\lambda(\bu-a^-)))(\theta) \geq \sigma.
\end{equation}
\item For all $h \in (\Fh,h_-]$, there exists $\sigma(h)>0$ such that for all $\bu \in W^{h_-}$ such that $W(\bu) \geq h$, there exists $\delta(\bu)>0$ such that for all $\theta \in (1-\delta(\bu),1+\delta(\bu))$ we have 
\begin{equation}\label{sigma_h}
\frac{d}{d\lambda}(W(p^-(\bu)+\lambda(\bu-p^-(\bu)))(\theta) \geq \sigma(h)
\end{equation}
with $p(\bu)$ the projection given in \ref{asu_unbalancedW}.
\end{enumerate}
\end{asu}
Essentially, assumption \ref{asu_conv_mon} imposes some convexity on some suitable subsets of the level sets of $W$, as well as some uniform monotinicity on segments. Some explanatory designs are shown in Figures \ref{fig_item1}, \ref{fig_item2}, \ref{fig_item3}. One can see assumption \ref{asu_projection_inverse} as an adaptation of the key assumption by Alikakos and Katzourakis to the case of degenerate minima. Indeed, in case $\CA^-$ is reduced to a singleton, \ref{asu_conv_mon} essentially reduces to the hypothesis formulated in \cite{alikakos-katzourakis}. Finally, recall that, as in \cite{alikakos-katzourakis}, multi-well potentials can satisfy the assumptions \ref{asu_unbalancedW}, \ref{asu_projection_inverse} and \ref{asu_conv_mon} as long as they possess a local minimum at a level higher than 0 and they are modified by an additive constant. We now come back to the following equation for a pair $(c,u)$
\begin{equation}\label{PROFILE_eq_Thm}
-c u' - u''=-\nabla_{\bu} W(u) \mbox{ in } \R
\end{equation}
and, assuming that \ref{asu_conv_mon} holds, consider the conditions at infinity
\begin{equation}\label{conditions_infinity}
\exists T^- \in \R: \forall t \leq T^-, \hspace{2mm} u(t) \in W^{h_-} \mbox{ and } \exists a^+(u) \in \CA^+: \lim_{t \to +\infty} \lvert u(t)-a^+(u) \rvert =0.
\end{equation}
For $c>0$, we consider as in \cite{alikakos-katzourakis}, the following weighted functional introduced in Fife and McLeod \cite{fife-mcleod77,fife-mcleod81}
\begin{equation}
E_c(v):= \int_\R e_c(v(t))dt:= \int_\R \left[ \frac{\lvert v'(t) \rvert^2}{2}+W(v(t)) \right] e^{ct}dt
\end{equation}
where $v$ belongs to the class
\begin{equation}
\CS:= \left\{ v \in H^1_\loc(\R,\R^k): \exists T \geq 1: \forall t \geq T, \hspace{2mm} v(t) \in \CA^+_{\rho_0^+/2} \mbox{ and } \forall t \leq -T, \hspace{2mm} v(t) \in \CA^-_{\rho_0^-/2} \right\}
\end{equation}
so that \eqref{PROFILE_eq_Thm} is (at least formally) the Euler-Lagrange equation of $E_c$. As in \cite{alikakos-katzourakis}, we shall find the solution profile $\Fu$ as a critical point (in fact, a global minimizer) of $E_{c^\star}$ in $\CS$ for a suitable $c^\star$ which, in addition, satisfies some uniqueness properties. In fact, assuming that \ref{asu_conv_mon} holds we shall consider the class
\begin{equation}
\overline{\CS}:= \left\{ v \in H^1_\loc(\R,\R^k): \exists T \geq 1: \forall t \geq T, \hspace{2mm} v(t) \in \CA^+_{\rho_0^+/2} \mbox{ and } \forall t \leq -T, \hspace{2mm} v(t) \in W^{h_-} \right\},
\end{equation}
which by 1. in \ref{asu_conv_mon} satisfies $\overline{S} \subset \CS$. As it was pointed out in \cite{alikakos-katzourakis}, for any $c>0$, $v \in \CS$ and $\tau \in \R$ we have
\begin{equation}
E_c(v(\cdot+\tau))=e^{-c\tau}E_c(v)
\end{equation}
which implies that for all $c>0$
\begin{equation}\label{inf_v}
\inf_{v \in \CS}E_c(v), \inf_{v \in \overline{\CS}}E_c(v) \in \{-\infty,0\}
\end{equation}
and that in case the infimum above is 0, then for any $v \in \CS$ with $E_c(v)>0$ we have that $E_c(v(\cdot+n)) \to 0$ as $n \to +\infty$. This remark shows that one cannot expect to solve the minimization problem \eqref{inf_v} directly, so that an indirect approach is needed. We can now state the main result of this paper:
\begin{theorem}[Main Theorem]\label{THEOREM_W_main}
Assume that \ref{asu_unbalancedW}, \ref{asu_projection_inverse} and \ref{asu_conv_mon} hold. Then, we have that
\begin{enumerate}
\item \textbf{Existence}. There exist $c^\star>0$ and $\Fu \in \CC^2_\loc(\R,\R^k) \cap \overline{S}$ such that $(c^\star,\Fu)$ fulfills \eqref{PROFILE_eq_Thm} and \eqref{conditions_infinity} as well as the variational characterization
\begin{equation}
E_{c^\star}(\Fu)=0= \inf_{v \in \overline{\CS}}E_{c^\star}(v).
\end{equation}
\item \textbf{Uniqueness of the speed}. Assume that $\overline{c^\star}>0$ is such that
\begin{equation}
\inf_{v \in \overline{\CS}}E_{\overline{c^\star}}(v)=0
\end{equation}
and that $\overline{\Fu} \in \CS$ is such that $(\overline{c^\star},\overline{\Fu})$ solves \eqref{PROFILE_eq_Thm} and $E_{\overline{c^\star}}(\overline{\Fu})<+\infty$.  Then, $\overline{c^\star}=c^\star$.
\item \textbf{Exponential convergence}. The convergence of $\Fu$ at $+\infty$ is exponential: There exists $\FM^+>0$ such that for all $t \in \R$
\begin{equation}
\lvert \Fu(t)-a^+(\Fu) \rvert \leq \FM^+ e^{-c^\star t}
\end{equation}
where $a^+(\Fu)$ is given by \eqref{conditions_infinity}.
\end{enumerate}
\end{theorem}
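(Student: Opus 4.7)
The plan is to follow the variational strategy of \cite{alikakos-katzourakis}, adapted to the degenerate setting as in \cite{oliver-bonafoux-tw}. Because \eqref{inf_v} prevents one from directly minimizing $E_c$ over $\overline{\CS}$, I would first introduce a constrained subclass of $\overline{\CS}$ which breaks the translation invariance --- for instance, by fixing $v(0)$ to lie on a prescribed intermediate level set of $W$ between $\CA^-$ and $\CA^+$. For each $c>0$, let $m(c)$ be the infimum of $E_c$ over this constrained class. Existence of a constrained minimizer $v_c$ then follows from the direct method: coercivity is supplied by the weight $e^{ct}$ together with \eqref{coercivity_W} and \eqref{W_R0} from \ref{asu_unbalancedW}, lower semicontinuity is standard, and assumption \ref{asu_projection_inverse} is used to bring minimizing sequences back into tubular neighborhoods of $\CA^\pm$ without increasing the energy, which is essential when $\CA^\pm$ is unbounded.

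The principal difficulty --- and where I expect the bulk of the work to lie --- is to show that $v_c$ is non-oscillatory, i.e. enters $W^{h_-,-}$ and $\CA^+_{\rho_0^+/2}$ only once and in between behaves monotonically in a radial sense from $\CA^-$ towards $\CA^+$. In \cite{alikakos-katzourakis} this step rests on the strict convexity of $W$ at the isolated minima; here, because $\CA^-$ is not a single point, I would substitute \ref{asu_conv_mon}. Item $1$ confines the relevant dynamics to the tubular neighbourhood where $p^-$ is smooth. Item $2$ supplies a uniform positive slope of $\lambda \mapsto W(a^- + \lambda(v_c - a^-))$ at the exit from the level set $\{W \leq (\Fh + h_-)/2\}$, so any oscillation can be strictly undercut by replacing $v_c$ on the oscillating piece by a radial segment from $\CA^-$. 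Item $3$ handles, via $p^-(v_c)$, the annular region close to $\CA^-$ where monotonicity is only local. Assembling these three items, in the spirit of \cite{alikakos-katzourakis} but with the pointwise ODE analysis replaced by segment comparisons, yields the required non-oscillation of $v_c$.

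With this in hand, I would study $c \mapsto m(c)$: it is continuous, $m(c) \to -\infty$ as $c \to 0^+$ (using $\Fh < 0$ to build competitors whose weighted negative bulk overwhelms the weight), and $m(c) > 0$ for $c$ large (the weight then concentrates the positive transition cost). Pick the critical value $c^\star$ with $m(c^\star) = 0$. A short argument based on the translation identity $E_c(v(\cdot + \tau)) = e^{-c\tau} E_c(v)$ and the non-oscillation shows that the constraint is inactive at the minimizer $\Fu$ at $c = c^\star$, so $\Fu$ is actually a free critical point of $E_{c^\star}$, satisfies \eqref{PROFILE_eq_Thm}, and realises $E_{c^\star}(\Fu) = 0 = \inf_{\overline{\CS}} E_{c^\star}$. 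Standard elliptic regularity yields $\CC^2_\loc$, and the condition at $-\infty$ in \eqref{conditions_infinity} is built into $\overline{\CS}$ via item $1$ of \ref{asu_conv_mon}.

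For uniqueness of the speed in point $2$, the divergence identity obtained by multiplying \eqref{PROFILE_eq_Thm} by $e^{ct}\overline{\Fu}'$ and integrating gives $E_{\overline{c^\star}}(\overline{\Fu}) = 0$ whenever $\overline{\Fu} \in \overline{\CS}$ solves \eqref{PROFILE_eq_Thm} with finite energy; hence $\overline{\Fu}$ realises $\inf_{\overline{\CS}} E_{\overline{c^\star}} = 0$, and a by-product of the non-oscillation analysis --- namely, that this infimum is attained only at $c = c^\star$ --- forces $\overline{c^\star} = c^\star$. For the exponential convergence in point $3$, the equality $E_{c^\star}(\Fu) = 0$ together with pointwise non-negativity of the energy density past the last entry of $\Fu$ into $\CA^+_{\rho_0^+/2}$ yields a weighted $L^2$-bound on $\Fu - a^+(\Fu)$ via \eqref{coercivity_W}; plugging this into \eqref{PROFILE_eq_Thm} and applying an ODE comparison on $\lvert \Fu(t) - a^+(\Fu) \rvert^2$ upgrades this integral bound to pointwise exponential decay at some rate $b > c^\star/2$. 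The subtlety is that $D^2 W(a^+)$ may be degenerate, so the decay rate cannot be read off a linearisation at $a^+$; it must instead be extracted from balancing the weighted energy against the transport term $c^\star \Fu'$ in the profile equation.
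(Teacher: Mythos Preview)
Your overall strategy is the right one and matches the variational scheme of \cite{alikakos-katzourakis} that the paper also follows: constrained minimization, a critical speed where the constrained infimum vanishes, non-oscillation via \ref{asu_conv_mon}, and then showing the constraint is inactive at the critical speed. Two points of comparison are worth noting.

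First, the paper does not prove Theorem~\ref{THEOREM_W_main} directly in $\R^k$. It instead routes everything through an abstract Hilbert-space result (Theorem~\ref{THEOREM-ABSTRACT}, under hypotheses \ref{hyp_unbalanced}--\ref{hyp_levelsets_bis}) and then specializes via $\scrL=\scrH=\scrH'=\R^k$, $\CE=W$ (Lemma~\ref{LEMMA_implication_W}). That abstract layer is there because the author wants the same machinery to apply to infinite-dimensional problems in \cite{oliver-bonafoux-tw}; for the present finite-dimensional statement it is inessential, so your direct approach is perfectly reasonable and arguably cleaner.

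Second, and more substantively, the paper's constraint is different from yours. Rather than pinning $v(0)$ to an intermediate level set, the paper works in the spaces $Y_T$ (see \eqref{YT}), which impose tail conditions $\CE(U(t))\leq\alpha_-$ for $t\leq -T$ and $U(t)\in\scrF^+_{r_0^+/2}$ for $t\geq T$ with a \emph{fixed} $T$. This makes lower semicontinuity and compactness immediate for each $T$, and the comparison results (Propositions~\ref{PROPOSITION_comparison_bis_1} and \ref{PROPOSITION_comparison_bis_+}, Corollary~\ref{COROLLARY_comparison_bis}) then give a uniform bound on the transition window, allowing one to pass to the limit $c\to c^\star$ and remove the constraint. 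Your single-point constraint ``$v(0)\in\{W=h^*\}$'' does \emph{not} by itself give coercivity: a competitor can leave $v(0)$, return to $\CA^-$, linger there on $[1,T]$ picking up a contribution $\sim\Fh e^{cT}/c$, and only then transition to $\CA^+$ at cost $\sim C e^{cT}$; for $c$ small this drives the infimum to $-\infty$. This is not fatal --- for $c$ large the balance reverses and your $m(c)$ becomes finite and positive --- but it means your coercivity claim (``supplied by the weight together with \eqref{coercivity_W} and \eqref{W_R0}'') is incomplete as stated, and the non-oscillation argument must be interleaved with the existence step rather than invoked afterwards. The paper's $Y_T$ mechanism avoids this circularity. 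If you want to keep a pointwise constraint, you would need to combine it with the tail conditions already present in $\overline{\CS}$ in a way that fixes the transition location uniformly, which in effect reproduces the $Y_T$ structure.
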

\begin{remark}
Notice that the conditions at infinity \eqref{conditions_infinity} show that $\Fu$ is not constant, since we clearly have that $\CA^-_{\rho_0^-/2} \cap \CA^+_{\rho^+_0/2}=\emptyset$. Regarding the uniqueness of the speed, in particular it holds that $c^\star$ is unique among the class of global minimizers.
\end{remark}
\begin{figure}[h!]
\centering
\includegraphics[scale=0.3]{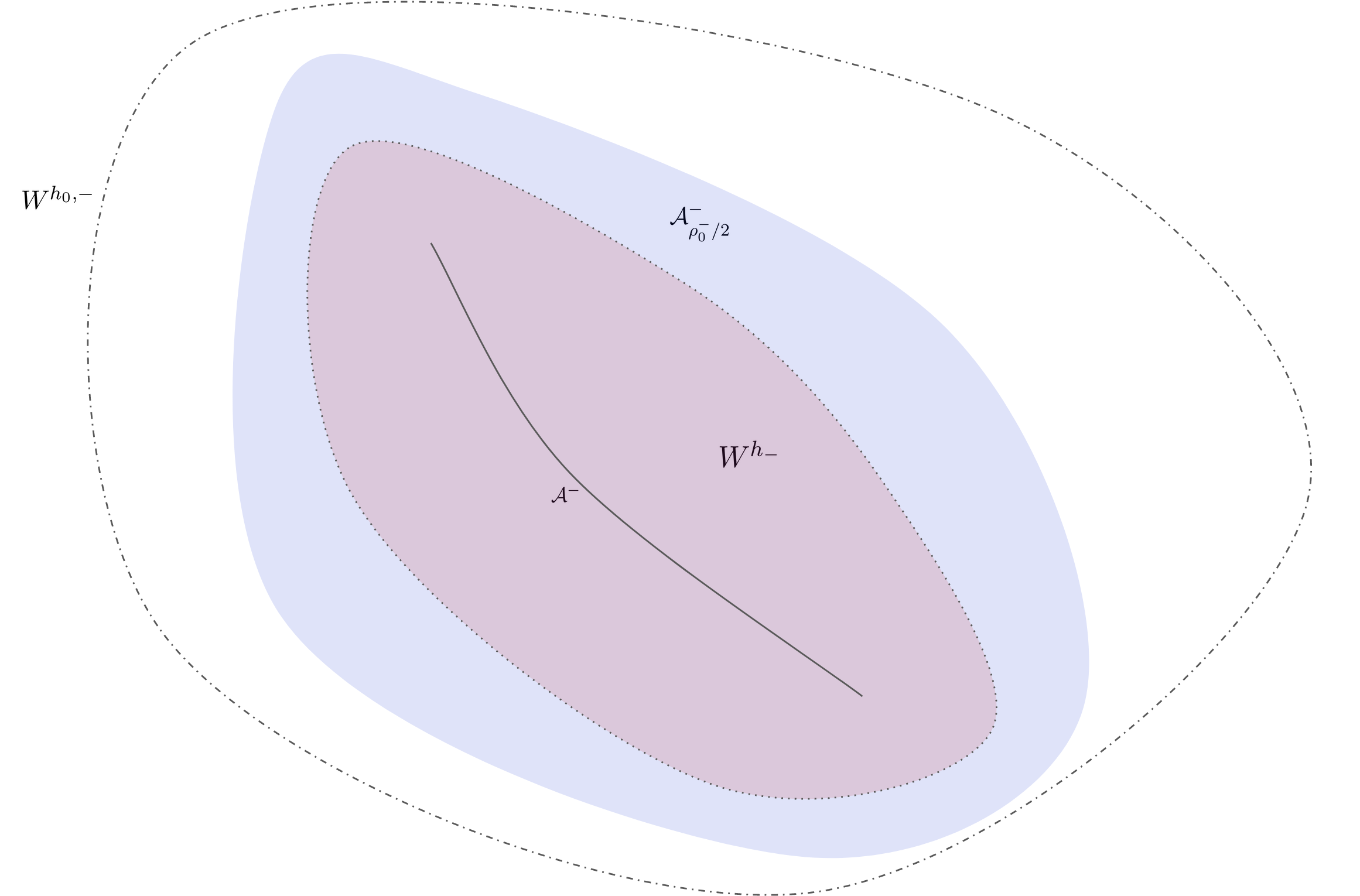}
\caption{Representation of 1. in \ref{asu_conv_mon}. The curve represents
the set $\CA^-$, while the inner convex shadowed region corresponds to the level set $W^{h_-}$,
which is contained in $\CA^-_{\rho_0^-/2}$ , the larger shadowed region. Finally, the outer
punctured line contains the convex region representing $W^{h_0,-}$.}\label{fig_item1}
\end{figure}
\begin{figure}[h!]
\centering
\includegraphics[scale=0.3]{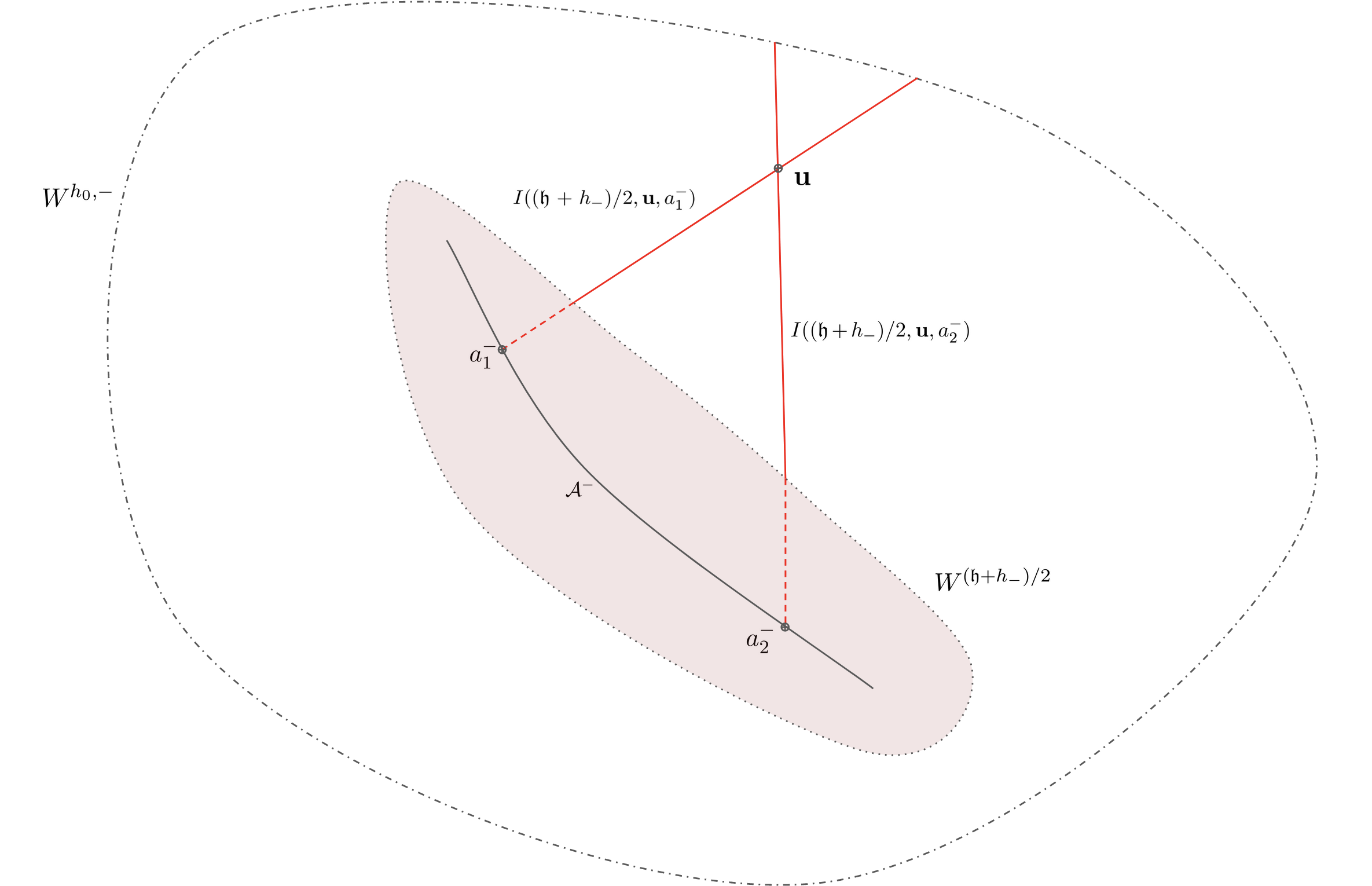}
\caption{Representation of 2. in \ref{asu_conv_mon}. We have $\bu$, which is contained
in $W^{h_0,-}$ but not in $W^{(\Fh+h_-)/2}$ . The full lines represent the segments $I((\Fh+h_-)/2,\bu,a_1^-)$ and $I((\Fh+h_-)/2,\bu,a_2^-)$ for $a_1^-$ and $a_2^-$ in $\CA^-$. The discontinuous lines complete the previous segments into the segment starting in $a^-_1$ and $a^-_2$.}\label{fig_item2}
\end{figure}
\begin{figure}[h!]
\centering
\includegraphics[scale=0.5]{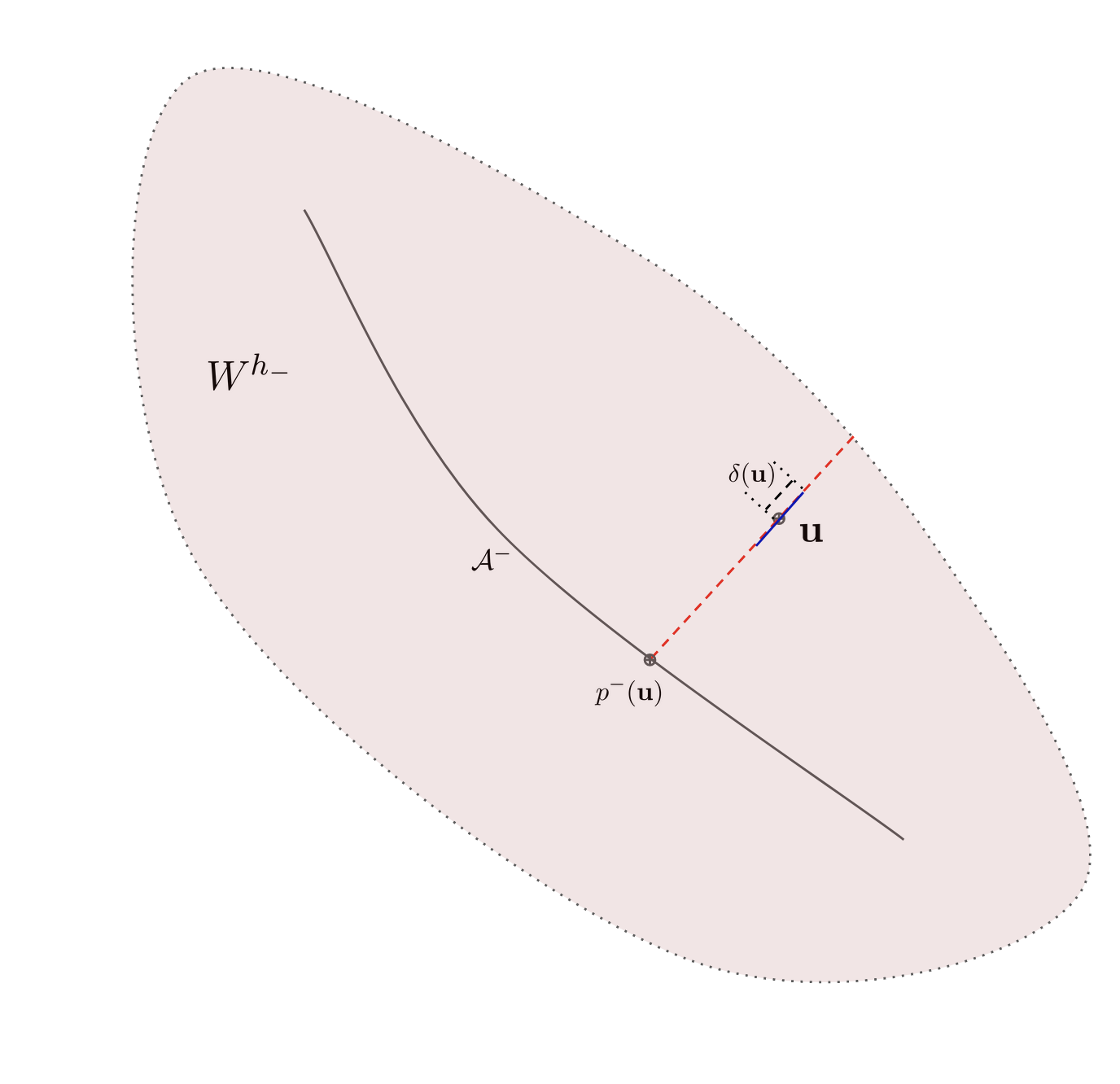}
\caption{Representation of 3. in \ref{asu_conv_mon}. We take $\bu \in W^{h_-}$ not contained in $\CA^-$. The discontinuous red line is a segment starting in $p^-(\bu)$ and containing $\bu$. The blue line represents the small portion of the segment centered in $\bu$ contained in the line going trough $p^-(\bu)$ and with length $2\delta(\bu)$, $\delta(\bu)> 0$. We ask uniform strict monotonicity property \eqref{sigma_h} to hold in this segment.}\label{fig_item3}
\end{figure}
Once the main result has been stated, we give some properties on the solution $(c^\star,\Fu)$, which are essentially the adaptation from \cite{oliver-bonafoux-tw} to the current setting.
\subsection{Conditions at infinity}
We now explain how the condition at infinity \eqref{conditions_infinity} can be upgraded. In particular, we would like conditions at infinity of the type
\begin{equation}\label{boundary_conditions_def}
\exists a^\pm(u) \in \CA^\pm: \lim_{t \to \pm \infty} \lvert u(t)-a^\pm(u) \rvert =0.
\end{equation}
Under the previous assumptions, we have no proof that $(c,\Fu)$ satisfies \eqref{boundary_conditions_def}. However, we can establish such a behavior adding the following assumption:
\begin{asu}\label{asu_bc}
We have one of the two following situations:
\begin{enumerate}
\item $\CA^-:=\{a^-\}$.
\item We have that
\begin{equation}
-\Fh < \frac{\left( d \eta \right)^2}{2}
\end{equation}
where
\begin{equation}\label{d}
d:= \dist_{\R^k}(	\CA^+_{\rho_0^+/2},W^{h_0,-})
\end{equation}
which is positive by the definition of $h_0$ and
\begin{equation}\label{eta}
\eta:= \frac{1}{\frac{1}{2}(C^-)^2((C^-)^2+(C^-+1)^2)+C^-}>0,
\end{equation}
where $C^-$ is the constant from \ref{asu_unbalancedW}.
\end{enumerate}
\end{asu}
Essentially, \ref{asu_bc} requires that either $\CA^-$ is a singleton (while $\CA^+$ does not need to be) or that the value $-\Fh$ is not too large. Then we have the following:
\begin{theorem}\label{THEOREM_W_bc}
Assume that \ref{asu_unbalancedW}, \ref{asu_projection_inverse}, \ref{asu_conv_mon} and \ref{asu_bc} hold. Let $(c^\star,\Fu)$ be a solution given by Theorem \ref{THEOREM_W_main}. Then, $\Fu$ also satisfies \eqref{boundary_conditions_def}. Moreover, the convergence is exponential in the following way: We have that $c^\star<\eta$, $\eta$ as in \eqref{eta}, and there exists $\FM^->0$ such that for all $t \in \R$
\begin{equation}
\lvert \Fu(t)-a^-(\bu) \rvert \leq \FM^- e^{(\eta-c^\star)t}
\end{equation}
\end{theorem}

\subsection{Min-max characterization of the speed}
We now give some results which characterize the speed $c^\star$. As before, such results are very close to the ones obtained in \cite{alikakos-katzourakis} and \cite{oliver-bonafoux-tw}. More specifically, in case \ref{asu_bc} holds, the speed $c^\star$ satisfies the following additional properties, which include a min-max characterization as in Heinze \cite{heinze}, Heinze, Papanicolau and Stevens \cite{heinze-papanicolau-stevens}:
\begin{theorem}\label{THEOREM_W_speed}
Assume that \ref{asu_unbalancedW}, \ref{asu_projection_inverse}, \ref{asu_conv_mon} and \ref{asu_bc} hold. Let $(c^\star,\Fu)$ be a solution given by Theorem \ref{THEOREM_W_main}. Then, for any $\tilde{\Fu} \in \overline{\CS}$ such that $E_{c^\star}(\tilde{\Fu})=0$ we have that $(c^\star,\tilde{\Fu})$ solves \eqref{PROFILE_eq_Thm} and
\begin{equation}
c^\star= \frac{-\Fh}{\int_\R \lvert \tilde{\Fu}' \rvert^2}.
\end{equation}
In particular, the quantity $\int_\R \lvert \tilde{\Fu}' \rvert^2$ is well-defined and constant among the class of global minimizers of $E_{c^\star}$ in $\overline{\CS}$. Moreover, it holds
\begin{equation}
c^\star \leq \frac{\sqrt{-2\Fh}}{d} < \eta,
\end{equation}
with $d$ as in \eqref{d}, where we have also used \ref{asu_bc}. Finally, we have the variational characterization
\begin{equation}
c^\star= \sup \{c>0: \inf_{v \in \overline{\CS}}E_c(v)=-\infty \} = \inf\{c>0: \inf_{v \in \overline{\CS}}E_c(v)=0 \}.
\end{equation}
\end{theorem}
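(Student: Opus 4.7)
The plan is to derive all three conclusions from a single Hamiltonian identity applied to any global minimizer $\tilde{\Fu}$. I would first show that any $\tilde{\Fu} \in \overline{\CS}$ with $E_{c^\star}(\tilde{\Fu})=0$ solves (\ref{PROFILE_eq_Thm}) classically at speed $c^\star$: for any $\phi \in \CC^\infty_c(\R,\R^k)$ the perturbation $\tilde{\Fu}+s\phi$ lies in $\overline{\CS}$ for all $s$ (the conditions at infinity are untouched by a compactly supported variation), so $s=0$ being a minimum of the smooth map $s \mapsto E_{c^\star}(\tilde{\Fu}+s\phi)$ yields the weak Euler--Lagrange equation, and $W \in \CC^3$ promotes $\tilde{\Fu}$ to a $\CC^2$ classical solution. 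Under \ref{asu_bc}, Theorem~\ref{THEOREM_W_bc} gives convergence $\tilde{\Fu}(t)\to a^-\in \CA^-$ at $-\infty$, while Theorem~\ref{THEOREM_W_main} provides exponential convergence at $+\infty$; a compactness argument on the translates $\tilde{\Fu}(\cdot+t_n)$, $t_n\to-\infty$, then upgrades this to $\tilde{\Fu}'(t)\to 0$ as $t\to-\infty$.

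Multiplying the profile equation by $\tilde{\Fu}'$ produces the Hamiltonian identity $\frac{d}{dt}\bigl[W(\tilde{\Fu}) - \tfrac{1}{2}|\tilde{\Fu}'|^2\bigr] = c^\star |\tilde{\Fu}'|^2$. Integrating over $\R$ using the above boundary behaviours yields $c^\star \int_\R |\tilde{\Fu}'|^2\,dt = -\Fh$, proving the stated formula for $c^\star$ and hence the invariance of $\int |\tilde{\Fu}'|^2$ across global minimizers. One further integration by parts, writing $W(\tilde{\Fu}) = \tfrac{1}{2}|\tilde{\Fu}'|^2 + [W(\tilde{\Fu})-\tfrac{1}{2}|\tilde{\Fu}'|^2]$ and integrating the bracketed term against $e^{ct}$ via its derivative $c^\star|\tilde{\Fu}'|^2$, produces
\begin{equation*}
E_c(\tilde{\Fu}) = \frac{c-c^\star}{c}\int_\R |\tilde{\Fu}'|^2 e^{ct}\,dt
\end{equation*}
for every $c$ in a neighbourhood of $c^\star$ (boundary terms vanish by exponential decay at $+\infty$ and by $e^{ct}\to 0$ at $-\infty$). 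Since $\tilde{\Fu}$ is non-constant, this quantity is negative for every $c\in(0,c^\star)$, so $\inf_{\overline{\CS}}E_c=-\infty$ on $(0,c^\star)$; combined with $\inf_{\overline{\CS}}E_{c^\star}=0$ this already forces $\inf\{c>0:\inf_{\overline{\CS}}E_c=0\}=c^\star$.

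To complete the min-max formula I would show that the set $A:=\{c>0:\inf_{\overline{\CS}}E_c=-\infty\}$ is downward closed. Given $v \in \overline{\CS}$ with $E_c(v)<0$ and any $c'\in(0,c)$, the dilation $w(t):=v\bigl((c'/c)t\bigr)$ lies in $\overline{\CS}$, and a direct change of variables gives
\begin{equation*}
E_{c'}(w) = \frac{c'}{2c}\int_\R |v'|^2 e^{cs}\,ds + \frac{c}{c'}\int_\R W(v)e^{cs}\,ds.
\end{equation*}
The inequality $E_c(v)<0$, rewritten as $\int W(v)e^{cs}\,ds < -\tfrac{1}{2}\int |v'|^2 e^{cs}\,ds$, together with $c'<c$, yields $E_{c'}(w) < \tfrac{1}{2}\bigl(\tfrac{c'}{c}-\tfrac{c}{c'}\bigr)\int_\R |v'|^2 e^{cs}\,ds < 0$. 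Hence $A$ is downward closed; any $c>c^\star$ in $A$ would force $c^\star \in A$, contradicting Theorem~\ref{THEOREM_W_main}. This gives $\sup A = c^\star$.

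The main obstacle is the boundary analysis at $-\infty$ for a general global minimizer: establishing $\tilde{\Fu}'(t)\to 0$ and that $W(\tilde{\Fu}) - |\tilde{\Fu}'|^2/2$ has a finite limit $\Fh$ when $W$ may be degenerate at $\CA^-$ and linearization at $a^-$ is unavailable. My approach would rely on compactness of the translates $\tilde{\Fu}(\cdot+t_n)$ together with the fact that the only bounded $\CC^2$ ODE-solution lying entirely in $\CA^-$ is the constant profile, using the coercivity from \ref{asu_unbalancedW} and the monotonicity encoded in \ref{asu_conv_mon} to obtain the uniform bounds needed on translates.
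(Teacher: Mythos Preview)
Your argument is essentially correct and is more explicit than the paper's treatment, which simply invokes the abstract Theorem~\ref{THEOREM_abstract_speed} (whose proof is in turn deferred to \cite{oliver-bonafoux-tw}). In that framework the min-max identity comes out of the structural result $\CD=(0,c^\star)$ (Corollary~\ref{COROLLARY_CD}), which is proved by combining openness of $\CD$ with the uniqueness Proposition~\ref{PROPOSITION-uniqueness}; the speed formula then follows from the equipartition identity (Lemma~\ref{LEMMA_equipartition}) together with the asymptotics of Lemmas~\ref{LEMMA_conv_sol_+} and~\ref{LEMMA_conv_sol_-}. Your route is more self-contained: the Hamiltonian identity plus a dilation argument for downward closure of $A$ replaces the appeal to uniqueness. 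The dilation competitor $w(t)=v((c'/c)t)$ is a genuinely different and clean device not used in the paper.

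Two points deserve care. First, you cite Theorem~\ref{THEOREM_W_bc} and the exponential rate in Theorem~\ref{THEOREM_W_main} for an \emph{arbitrary} minimizer $\tilde{\Fu}$, but both are stated only for the constructed solution. This is not a real obstruction: any $\tilde{\Fu}\in\overline{\CS}$ with $E_{c^\star}(\tilde{\Fu})=0$ is a constrained minimizer in some $Y_T$, so Lemmas~\ref{LEMMA_conv_sol_+}, \ref{LEMMA_conv_sol_-} and \ref{LEMMA_isolated} apply to it directly; you should cite those lemmas rather than the theorems. Second, the integration-by-parts identity $E_c(\tilde{\Fu})=\frac{c-c^\star}{c}\int|\tilde{\Fu}'|^2e^{ct}\,dt$ requires the boundary terms $W(\tilde{\Fu}(t))e^{ct}$ and $|\tilde{\Fu}'(t)|^2e^{ct}$ to vanish at $+\infty$; Lemma~\ref{LEMMA_conv_sol_+} only guarantees this for $c<\tfrac{c^\star+\gamma^+}{2}$, which need not exceed $c^\star$. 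So the formula may not hold on a full left neighbourhood of $c^\star$. This is easily repaired: the formula does give $E_{c_0}(\tilde{\Fu})<0$ for some $c_0<c^\star$, and then your dilation argument (downward closure of $A$) already yields $(0,c_0]\subset A$; to reach all of $(0,c^\star)$ you can either invoke the openness of $A$ together with the fact that $c^\star=\sup\CD$ by construction, or observe that the same Hamiltonian argument applied to the minimizers $\bV_{c,T}$ for $c$ slightly below $c^\star$ fills the remaining interval.
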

\begin{remark}
Combining the uniqueness part in Theorem \ref{THEOREM_W_main} and Theorem \ref{THEOREM_W_speed}, we obtain that for any $c>c^\star$, the infimum
\begin{equation}
\inf_{v \in \overline{\CS}}E_c(v)=0
\end{equation}
is never attained.
\end{remark}
\section{Extensions and open problems}
\subsection{Other classes of potentials}
Applying directly the abstract result from \cite{oliver-bonafoux-tw}, it is possible to replace \ref{asu_conv_mon} by an assumption which consists on an upper bound on $\Fh$. Following the idea in \cite{oliver-bonafoux-tw}, a particular family of potentials satisfying such an assumption can be obtained by suitably perturbing a given multi-well potential (or even a more general potential). More precisely, one can consider a non-negative smooth potential $V$ vanishing in two sets $\CA_1$ and $\CA_2$ in $\R^k$ and with local properties as in \ref{asu_unbalancedW} and \ref{asu_projection_inverse}, with the obvious modifications. A suitable perturbation around $\CA_2$ gives a family of unbalanced potentials $(W_\delta)_{\delta  \geq 0}$ such that $W_{0}=V$ and for $\delta \in (0,\delta_0]$ (for some $\delta_0>0$), the abstract framework from \cite{oliver-bonafoux-tw} applies to $W_\delta$ (up to an additive constant). In particular, one has results as the ones of this paper for $W_\delta$, which was a type of potential not allowed in \cite{alikakos-katzourakis}.
\subsection{The problem of traveling waves for 2D parabolic Allen-Cahn systems}
Our initial motivation comes from the study of the parabolic Allen-Cahn system
\begin{equation}\label{parabolic-allencahn_W}
\partial_t w - \Delta w = -\nabla_\bu V(w) \mbox{ in } [0,+\infty) \times \R^2,
\end{equation}
where $V$ is a standard, smooth, multi-well potential. In \cite{oliver-bonafoux-tw}, we show by an abstract approach that there exist traveling wave solutions $w:(t,x_1,x_2) \in [0,+\infty) \times \R^2 \to \FU(x_1-c^\star t,x_2) \in \R^k$ of \eqref{parabolic-allencahn_W} such that
\begin{equation}
\lim_{x_1 \to \pm +\infty}\lVert \FU(x_1,\cdot)-\Fq^\pm \rVert_{H^1(\R,\R^k)}=0
\end{equation}
where $\Fq^\pm$ solve
\begin{equation}
\left(\Fq^\pm\right)''=\nabla_\bu V(\Fq^\pm) \mbox{ in } \R
\end{equation}
and satisfy
\begin{equation}
\lim_{t \to \pm \infty} \Fq^\pm(t)=\sigma^\pm
\end{equation}
with $\sigma^\pm \in \Sigma:= \{ \bu \in \R^k: V(\bu)=0\}$. Moreover, $\Fq^-$ and $\Fq^+$ are (in some sense) \textit{local minimizers} of the energy
\begin{equation}
E: q \in H^1_\loc(\R,\R^k) \to E(q):= \int_\R \left[ \frac{\lvert q'(t) \rvert^2}{2}+V(q(t)) \right] dt
\end{equation}
with \textit{different energy levels}. That is
\begin{equation}
E(\Fq^\pm)=\Fm^\pm
\end{equation} 
with $\Fm^+> \Fm^-$ and $\Fm^-$ the global minima. The main point is that $\Fm^+>\Fm^+$, as the case $\Fm^+=\Fm^-$ had already been treated by several authors and it is by now well understood. See Fusco \cite{fusco}, Monteil and Santambrogio \cite{monteil-santambrogio}, Schatzman \cite{schatzman}, Smyrnelis \cite{smyrnelis}. As already explained, in \cite{oliver-bonafoux-tw}  we have to assume that $0 < \Fm^+-\Fm^-$ is bounded above by a given constant. As in here, the approach of the proof is to deduce the result from an abstract setting. In an attempt to extend our result to other situations, we were able to replace the perturbation assumption by another one in the abstract setting. The other assumption is the abstract version of \ref{asu_conv_mon} and the one we use here (see \ref{hyp_levelsets_bis}). However, the inconvenient is that such condition does not seem to apply when dealing with traveling waves of the system \eqref{parabolic-allencahn_W}. Even for simple examples of the potential $V$, we cannot prove that such condition is satisfied, which does not allow us to apply the abstract result. The reason is that properties such as convexity of level sets and uniform strict monotonicity along segments are harder to verify in infinite dimensions. However, if an example of potential $V$ such that \ref{hyp_levelsets_bis} is satisfied could be found, then we would have the existence of a pair $(c^\star,\FU)$ satisfying the same properties than the solution obtained in \cite{oliver-bonafoux-tw}.

\subsection{Traveling waves between homoclinics}
One considers again the Allen-Cahn system
\begin{equation}\label{parabolic-allencahn_W_bis}
\partial_t w - \Delta w = -\nabla_\bu V(w) \mbox{ in } [0,+\infty) \times \R^2,
\end{equation}
with $V$ as in the previous paragraph. We use the same notation as above. This time, we consider the conditions at infinity
\begin{equation}
\lim_{x_1 \to \pm +\infty}\lVert \FU(x_1,\cdot)-\Fu^\pm \rVert_{H^1(\R,\R^k)}=0
\end{equation}
for the associated profile $\FU$. Here $\Fu^-$ is the constant homoclinic solution equal to $\sigma \in \Sigma$ and $\Fu^+$ is such that $E(\Fu^+)>0$ and it solves
\begin{equation}
\left( \Fu^+ \right)''=\nabla_{\bu}V(\Fu^+) \mbox{ in } \R
\end{equation}
satisfying
\begin{equation}
\lim_{t \to \pm \infty} \Fu^+(t) = \sigma.
\end{equation}
That is, $\Fu^+$ is a non-constant homoclinic emanating from $\sigma$. If $\Fu^+$ is locally minimizing and, moreover, it satisfies the assumptions given by some of our abstract results\footnote{That is, either the perturbative one that we use in \cite{oliver-bonafoux-tw} or the one that we give here.}, then the existence of a solution $(c^\star,\FU)$ is guaranteed. The fact that $\Fu^+ \not = \Fu^+$ implies that $\FU$ is not constant, and since $\Fu^-$ and $\Fu^+$ are homoclinic, then $\FU$ can be thought as a heteroclinic between homoclinics. Moreover, notice that such type of solution cannot exist in the stationary case, as stationary waves join two different 1D solutions which are global minimizers, which is not possible in the homoclinic case\footnote{Clearly, the only globally minimizing homoclinic is the constant one.}. Even though for the moment we have no explicit example of potential $V$ such that the necessary assumptions are fulfilled, a previous result by the author \cite{oliver-bonafoux} suggest that homoclinic solutions might exist in some situations.

\section{The abstract setting}

As we advanced in the introduction, the main results of this paper are obtained from exploiting the abstract setting introduced in \cite{oliver-bonafoux-tw}. The current setting only differs to that in \cite{oliver-bonafoux-tw} by the modification of some assumptions, but the structure is analogous. As a consequence, we shall be more brief here in the exposition. We begin by recalling the main notations. Let $\scrL$ be a Hilbert space with inner product $\langle \cdot,\cdot \rangle_{\scrL}$ and induced norm $\lVert \cdot \rVert_{\scrL}$. Let $\scrH \subset \scrL$ a Hilbert space with inner product $\langle \cdot,\cdot \rangle_{\scrH}$. We will take $\CE: \scrL \to (-\infty,+\infty]$ an \textit{unbalanced} potential. We impose a set of general assumptions on $\CE$. Those assumptions are an adaptation of those in \cite{alikakos-katzourakis} for the infinite-dimensional setting with non-isolated minima. We will begin by fixing two sets $\scrF^-$ and $\scrF^+$ in $\scrL$.  For $r>0$, we define
\begin{equation}\label{Fr}
\scrF^\pm_r:= \left\{ v \in \scrL: \inf_{\bv \in \scrF^\pm}\lVert v-\bv \rVert_{\scrL}\leq r \right\},
\end{equation}
and
\begin{equation}\label{FrH}
\scrF^\pm_{\scrH,r}:= \left\{ v \in \scrH: \inf_{\bv \in \scrF^\pm}\lVert v-\bv \rVert_{\scrH}\leq r \right\},
\end{equation}
that is, the closed balls in $\scrL$ and $\scrH$ respectively, with radius $r>0$ and center $\scrF^\pm$. The main assumption reads as follows:
\begin{hyp}\label{hyp_unbalanced}
The potential $\CE$ is weakly lower semicontinuous in $\scrL$. The sets $\scrF^-$ and $\scrF^+$ are closed in $\scrL$. There exists a constant $a<0$ such that
\begin{equation}
\forall v \in \scrL, \forall \bv^- \in \scrF^-, \hspace{2mm} \CE(v)  \geq \CE(\bv^-) = a
\end{equation}
and each $\bv^+ \in \scrF^+$ is a local minimizer satisfying $\CE(\bv^+) =0$. Moreover, there exist two positive constants $r_0^-$, $r_0^+$ such that $\scrF^+_{r_0^-} \cap \scrF^-_{r_0^+} = \emptyset$ (see \eqref{Fr}). There also exist $C^\pm>1$ such that
\begin{equation}\label{coercivityL}
\forall v \in \scrF^\pm_{r_0^\pm}, \hspace{2mm} (C^\pm)^{-1}\dist_{\scrL}(v,\scrF^\pm)^2 \leq \CE(v)-\min\{ \pm(-a),0\}.
\end{equation}
Moreover, for any $v \in \scrF_{r_0^\pm}^\pm$, there exists a unique $\bv^\pm(v) \in \scrF^\pm$ such that
\begin{equation}
\lVert v-\bv^\pm(v) \rVert_{\scrL}=\inf_{\bv^\pm \in \scrF^\pm}\lVert v-\bv^\pm \rVert_{\scrL}.
\end{equation}
Moreover, the projection maps
\begin{equation}\label{projection}
P^\pm: v \in \scrF^\pm_{r_0^\pm} \to \bv^\pm(v) \in \scrF^\pm
\end{equation}
are $C^2$ with respect to the $\scrL$-norm.
\end{hyp}
The abstract result consists essentially on establishing existence of a pair $(c,\bU)$ in $(0,+\infty) \times Y$ ($Y$ is a function space to be defined later, see \eqref{Y}) which fulfills
\begin{equation}\label{abstract_equation}
\bU''-D_{\scrL}\CE(\bU)=-c\bU' \mbox{ in } \R
\end{equation}
and satisfies the conditions at infinity
\begin{equation}\label{abstract_bc_weak}
\exists T^- \leq 0: \forall t \leq T^-, \hspace{2mm} \bU(t) \in \scrF^-_{r_0^-/2} \hspace{1mm}\mbox{ and }\hspace{1mm} \exists \bv^+(\bU) \in \scrF^+: \lim_{t \to +\infty} \lVert \bU(t)-\bv^+(\bU) \rVert_{\scrH}=0.
\end{equation}
Hypothesis \ref{hyp_unbalanced} defines $\CE$ as an unbalanced double well potential with respect to $\scrF^-$ and $\scrF^+$ and gives local information of the minimizing sets. We have the following immediate consequence, which will be useful in the sequel:
\begin{lemma}\label{LEMMA_positivity}
Assume that \ref{hyp_unbalanced} holds. If we define for $r \in (0, r_0^\pm]$ we define
\begin{equation}\label{kappa_r}
\kappa^\pm_r:=\inf\{ \CE(v): \dist_\scrL(v,\scrF^\pm) \in [r,r_0^\pm] \}
\end{equation}
then we have $\kappa^\pm_r>\min\{ \pm(-a),0\}$. Moreover,
\begin{equation}\label{positivity}
\forall v \in \scrF^+_{r_0^+/2},\hspace{2mm} \CE(v) \geq 0.
\end{equation}
\end{lemma}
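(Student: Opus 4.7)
The plan is to derive both claims as essentially immediate consequences of the quadratic coercivity estimate (\ref{coercivityL}) in \ref{hyp_unbalanced}, after carefully evaluating $\min\{\pm(-a),0\}$ using the sign assumption $a<0$.

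For the first claim, fix any $v$ with $\dist_\scrL(v,\scrF^\pm)\in[r,r_0^\pm]$. In particular $v\in\scrF^\pm_{r_0^\pm}$, so (\ref{coercivityL}) applies and gives
\[
\CE(v)\;\geq\; \min\{\pm(-a),0\} + (C^\pm)^{-1}\dist_\scrL(v,\scrF^\pm)^2 \;\geq\; \min\{\pm(-a),0\} + (C^\pm)^{-1} r^2.
\]
Taking the infimum over such $v$ yields $\kappa^\pm_r \geq \min\{\pm(-a),0\} + (C^\pm)^{-1} r^2$, and since $C^\pm>0$ and $r>0$ the additive term is strictly positive, proving the strict inequality. (If the set defining $\kappa^\pm_r$ happens to be empty, the infimum is $+\infty$ and the conclusion is trivial.) I would state these two cases in parallel, just noting separately that $\min\{-a,0\}=0$ when working with $\scrF^+$ and $\min\{a,0\}=a$ when working with $\scrF^-$ (both because $a<0$), so that the lower bound reads $(C^+)^{-1}r^2 > 0$ in the first case and $a + (C^-)^{-1}r^2 > a$ in the second.

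For the second claim (\ref{positivity}), I observe that any $v\in\scrF^+_{r_0^+/2}$ is \emph{a fortiori} in $\scrF^+_{r_0^+}$, so (\ref{coercivityL}) with the $+$ sign applies and, using $\min\{-a,0\}=0$, gives
\[
\CE(v) \;\geq\; (C^+)^{-1}\dist_\scrL(v,\scrF^+)^2 \;\geq\; 0.
\]
This closes the proof.

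No real obstacle is expected: the whole lemma is a bookkeeping exercise unpacking the quadratic growth built into \ref{hyp_unbalanced}. The only point requiring slight attention is the sign convention in $\min\{\pm(-a),0\}$, which must be traced consistently through both signs; once this is done the strict inequalities follow from the positivity of $(C^\pm)^{-1}r^2$.
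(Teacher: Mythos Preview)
Your proof is correct and follows exactly the approach the paper intends: both claims are read off directly from the coercivity estimate (\ref{coercivityL}) in \ref{hyp_unbalanced}, with the sign of $\min\{\pm(-a),0\}$ tracked as you do. The paper's own proof consists of the single sentence ``It follows directly from (\ref{coercivityL}) in \ref{hyp_unbalanced},'' so your write-up is simply the spelled-out version of that.
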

\begin{proof}
It follows directly from \eqref{coercivityL} in \ref{hyp_unbalanced}.
\end{proof}
 We now impose the following regarding the relationship between $\scrL$ and $\scrH$:
\begin{hyp}\label{hyp_smaller_space}
We have that $\scrH=\{ v \in \scrL: \CE(v)<+\infty\}$ and $\lVert \cdot \rVert_{\scrL} \leq \lVert \cdot \rVert_{\scrH}$. In particular, $\scrF^\pm \subset \scrH$. Furthermore $\CE$ is coercive in $\scrH$ in the following weak sense: there exists a constant $C_{\textit{c}}$ such that
\begin{equation}\label{coercivity}
\forall v \in \scrH, \hspace{2mm} \lVert v \rVert_{\scrH}^2-\lVert v \rVert_{\scrL}^2 \leq C_{\textit{c}}(\CE(v)-a)
\end{equation}
Moreover, $\CE$ is a $C^1$ functional on $(\scrH,\lVert \cdot \rVert_{\scrH})$ with differential $D\CE:v \in \scrH \to D\CE(v) \in \scrH$ and
\begin{equation}\label{CE_R_est}
\forall R>0, \exists C_{\CE}(R)>0, \forall (v,w) \in \scrH^2: \lVert v \rVert_{\scrH} \leq R, \hspace{2mm} \lvert D\CE(v)(w)  \rvert \leq C_{\CE}(R)\lVert w \rVert_{\scrH}.
\end{equation}
Finally, there exists an even smaller space $\scrH'$ with an inner product $\langle \cdot, \cdot \rangle_{\tilde{\scrH}}$ and associated norm $\lVert \cdot \rVert_{\tilde{\scrH}} \geq \lVert \cdot \rVert_{\scrH}$ such that we can find a continuous correspondence
\begin{equation}\label{DL}
D_{\scrL}\CE:v \in (\tilde{\scrH},\lVert \cdot \rVert_{\tilde{\scrH}}) \to D_{\scrL}\CE(v) \in (\scrL,\lVert \cdot \rVert_{\scrL})
\end{equation}
such that
\begin{equation}\label{abstract_ipp}
\forall v \in \tilde{\scrH}, \forall w \in \scrH,\hspace{2mm} D_{\scrL}\CE(v)(w)= D\CE(v)(w),
\end{equation}
where we have identified the derivative of $\CE$ with an element of $\scrH$ via Riesz's Theorem.
\end{hyp}
We now continue by imposing a compactness assumption on $\scrF^\pm$.
\begin{hyp}\label{hyp_compactness}
$\scrL$-bounded subsets of $\scrF^\pm$ are compact with respect to $\scrH$-convergence.\footnote{hence, they are in particular compact with respect to $\scrL$-convergence}
\end{hyp}
Assumption \ref{hyp_compactness} readily implies the following:
\begin{lemma}\label{LEMMA_compactness}
Assume that \ref{hyp_unbalanced} and \ref{hyp_compactness} hold. Then, the sets $\scrF^\pm_{r_0^\pm/2}$ defined in \eqref{Fr} are closed in $\scrL$.
\end{lemma}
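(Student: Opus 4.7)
The plan is to show sequential closedness directly: take an $\scrL$-convergent sequence $(v_n)_{n \in \N}$ in $\scrF^\pm_{r_0^\pm/2}$ with $\scrL$-limit $v$, and prove that $v$ itself lies in $\scrF^\pm_{r_0^\pm/2}$. Since each $v_n$ belongs a fortiori to $\scrF^\pm_{r_0^\pm}$, Hypothesis \ref{hyp_unbalanced} provides a unique projection $\bv_n^\pm := P^\pm(v_n) \in \scrF^\pm$ realizing the infimum, so that
\begin{equation}
\lVert v_n - \bv_n^\pm \rVert_\scrL = \inf_{\bv \in \scrF^\pm} \lVert v_n - \bv \rVert_\scrL \leq r_0^\pm/2.
\end{equation}

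The natural candidate for a limit projection is any accumulation point of $(\bv_n^\pm)_{n \in \N}$, so the next step is to produce one. Because $(v_n)_{n \in \N}$ is $\scrL$-convergent it is $\scrL$-bounded, and the triangle inequality gives $\lVert \bv_n^\pm \rVert_\scrL \leq \lVert v_n \rVert_\scrL + r_0^\pm/2$, hence $(\bv_n^\pm)_{n \in \N}$ is an $\scrL$-bounded subset of $\scrF^\pm$. Hypothesis \ref{hyp_compactness} then yields a subsequence $(\bv_{n_k}^\pm)_{k \in \N}$ and an element $\bv^\pm \in \scrF^\pm$ (using that $\scrF^\pm$ is $\scrL$-closed by \ref{hyp_unbalanced}, together with the fact that $\scrH$-convergence implies $\scrL$-convergence since $\lVert \cdot \rVert_\scrL \leq \lVert \cdot \rVert_\scrH$) such that $\bv_{n_k}^\pm \to \bv^\pm$ in $\scrH$, and hence in $\scrL$.

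Finally, I pass to the $\scrL$-limit in the inequality $\lVert v_{n_k} - \bv_{n_k}^\pm \rVert_\scrL \leq r_0^\pm/2$ using $v_{n_k} \to v$ and $\bv_{n_k}^\pm \to \bv^\pm$ in $\scrL$, which gives $\lVert v - \bv^\pm \rVert_\scrL \leq r_0^\pm/2$ and consequently $\dist_\scrL(v, \scrF^\pm) \leq r_0^\pm/2$, i.e.\ $v \in \scrF^\pm_{r_0^\pm/2}$.

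There is no real obstacle here — the argument is a standard compactness--projection closure argument — but the one point worth checking carefully is that the compactness provided by \ref{hyp_compactness} is in the $\scrH$-topology (which is a priori stronger than $\scrL$) and that the limit indeed belongs to $\scrF^\pm$; both issues are handled by the norm comparison $\lVert \cdot \rVert_\scrL \leq \lVert \cdot \rVert_\scrH$ from \ref{hyp_smaller_space}, which however is not even needed in its full strength, together with the $\scrL$-closedness of $\scrF^\pm$ built into \ref{hyp_unbalanced}.
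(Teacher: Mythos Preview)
Your proof is correct and is precisely the natural argument the paper has in mind; in fact the paper omits the proof entirely, stating only that the lemma is readily implied by \ref{hyp_compactness}. One minor remark: you do not actually need the projection $P^\pm$ from \ref{hyp_unbalanced} to obtain the $\bv_n^\pm$ --- it suffices to pick any $\bv_n^\pm \in \scrF^\pm$ with $\lVert v_n - \bv_n^\pm \rVert_\scrL \leq r_0^\pm/2 + 1/n$, which exists by definition of the infimum --- but using $P^\pm$ is harmless since $\scrF^\pm_{r_0^\pm/2} \subset \scrF^\pm_{r_0^\pm}$.
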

%\begin{hyp}\label{hyp_projections}

% Furthermore, if $DP^\pm$ is the differential of $P^\pm$, then we have the uniform bound
%\begin{equation}\label{projection_bound}
%\exists C_P^\pm >0 : \forall v \in \scrF^\pm_{r_0}, \forall w \in \scrL, \hspace{2mm} \lVert DP^\pm(v)(w) \rVert_{\scrL}\leq C_P^\pm \lVert w \rVert_{\scrL},
%\end{equation}
%and the orthogonality relation
%\begin{equation}\label{orthogonality}
%\forall v \in \scrF^\pm_{r_0}, \forall w \in \scrL, \hspace{2mm} \langle DP^\pm(v)(w),P^\pm(v)-v \rangle_{\scrL}=0,
%\end{equation}
%where we identify $DP^\pm(v)(w)$ with an element of $\scrL$ via Riesz Theorem.
%\end{hyp}
Assumption \ref{hyp_compactness} is necessary in order to establish the conditions at infinity. Subsequently, we impose the following:
\begin{hyp}\label{hyp_projections_inverse}
Assume that \ref{hyp_unbalanced} holds. One of the two holds:
\begin{enumerate}
\item $\scrF^\pm$ is bounded.
\item Fix $(v,\bv^\pm) \in \scrF^\pm_{r_0^\pm} \times \scrF^\pm$. There exist an associated map $\hat{P}^\pm_{(v,\bv^\pm)}: \scrL \to \scrL$ such that
\begin{equation}\label{hatP}
P^\pm(\hat{P}^\pm_{(v,\bv^\pm)}(v))=\bv^\pm
\end{equation}
and
\begin{equation}\label{hatP_dist}
 \dist_{\scrL}(\hat{P}^\pm_{(v,\bv^\pm)}(v),\scrF^\pm)=\dist_{\scrL}(v,\scrF^\pm).
\end{equation}
Moreover, $\hat{P}^\pm_{(v,\bv^\pm)}: \scrL \to \scrL$ is differentiable and
\begin{equation}\label{D_inverse}
\forall (w_1,w_2) \in \scrL^2, \hspace{2mm} \lVert D(\hat{P}^\pm_{(v,\bv^\pm)})(w_1,w_2) \rVert_{\scrL}=\lVert w_2 \rVert_{\scrL}
\end{equation}
\begin{equation}\label{hatP_CE}
\CE(\hat{P}_{(v,\bv^\pm)}(v))=\CE(v).
\end{equation}
\end{enumerate}
\end{hyp}
Essentially, we impose that the projections $P^\pm$ from  \ref{hyp_unbalanced} are, in some sense, invertible. We now impose an assumption for the sets $\scrF^\pm_{\scrH, r_0}$:
\begin{hyp}\label{hyp_projections_H}
For any $v \in \scrF_{\scrH,r_0^\pm}^\pm$, as defined in \eqref{FrH}, there exists a unique $\bv_\scrH^\pm(v) \in \scrF^\pm$ such that
\begin{equation}
\lVert v-\bv_\scrH^\pm(v) \rVert_{\scrL}=\inf_{\bv^\pm \in \scrF^\pm}\lVert v-\bv^\pm \rVert_{\scrL}.
\end{equation}
Moreover, the projection maps
\begin{equation}
P^\pm_\scrH: v \in \scrF^\pm_{\scrH,r_0^\pm} \to \bv_\scrH^\pm(v) \in \scrF^\pm
\end{equation}
are $C^1$ with respect to the $\scrH$-norm. Moreover, if $C^\pm>1$ is the constant from \ref{hyp_unbalanced}, we have
\begin{equation}\label{H_difference_projections}
\forall v \in \scrF^\pm_{\scrH,r_0^\pm}, \hspace{2mm} \lVert P^\pm(v)-P^\pm_{\scrH}(v) \rVert_{\scrH} \leq C^\pm \lVert v-P^\pm_{\scrH}(v) \rVert_{\scrH}.
\end{equation}
Furthermore, for each $r^\pm \in (0,r_0^\pm]$ there exist constants $\beta^\pm(r^\pm)>0$ such that in case that $v \in \scrF^\pm_{r_0^\pm}$ satisfies
\begin{equation}\label{H_local_bound}
\CE(v) \leq \min\{ \pm(-a),0\}+\beta^\pm(r^\pm),
\end{equation}
then $v \in \scrF_{\scrH,r}^\pm$. Finally, we have the following
\begin{equation}\label{H_local_est}
\forall v \in \scrF_{\scrH,r_0^\pm}^\pm, \hspace{2mm} (C^\pm)^{-2}\lVert v-P^\pm_\scrH(v) \rVert_{\scrH}^2 \leq \CE(v)- \min\{ \pm(-a),0\} \leq (C^\pm)^2 \lVert v-P^\pm_\scrH(v) \rVert_{\scrH}^2.
\end{equation}
\end{hyp}

 %We now impose a radial monotonicity assumption with respect to projection
%\begin{hyp}\label{hyp_radial}
%Let $v \in \scrF^\pm_{r_0} \cap \scrH \setminus \scrF^\pm$. The function
%\begin{equation}
%\lambda \in \left[ 0, \frac{r_0}{\lVert v-P^\pm(v) \rVert_{\scrL}} \right] \to \CE(P^\pm(v)+\lambda(v-P^\pm(v))),
%\end{equation}
%is strictly increasing. Recall that $P^\pm$ is the projection mapping defined in \eqref{projection}).
%\end{hyp}
%In the context of Theorem \ref{THEOREM_main_TW}, hypothesis \ref{hyp_radial} will follow from the spectral assumption in \ref{asu_unbalanced}. Notice that, in such a scenario, one cannot hope to get a stronger radial monotonicity property, as the elements in $\CF^\pm$ form a continuum.

Assumption \ref{hyp_projections_H} is made in order to ensure the suitable local properties around $\scrF^\pm$ in $\scrH$. Before introducing the last assumptions, we need some additional notation. For $U \in H^1_{\loc}(\R,\scrL)$ and $c>0$, we (formally) define
\begin{equation}\label{Ec}
\bE_c(U):= \int_\R \be_c(U)(t) dt:= \int_{\R} \left[ \frac{\lVert U'(t) \rVert_{\scrL}^2}{2}+\CE(U(t)) \right]e^{ct} dt.
\end{equation}
More generally, for $I \subset \R$ a non-empty interval and $U \in H^1_{\loc}(I,\scrL)$, put
\begin{equation}\label{Ec_interval}
\bE_c(U;I) := \int_I \be_c(U)(t) dt.
\end{equation}
Notice that the integrals defined in \eqref{Ec} and \eqref{Ec_interval} might not even make sense in general due to the fact that $\CE$ has a sign. Nevertheless, we can define the notion of \textit{local minimizer} of $\bE_c(\cdot;I)$ as follows:
\begin{definition}\label{def_local_minimizer}
Assume that \ref{hyp_unbalanced} and \ref{hyp_smaller_space} hold. Let $I \subset \R$ be a bounded, non-empty interval. Assume that  $U \in H^1_{\loc}(I,\scrL)$ is such that $E_c(U;I)$ is well-defined and finite. Assume also that there exists $C>0$ such that for any $\phi \in \CC^1_c(\mathrm{int}(I),(\scrH,\lVert \cdot \rVert_{\scrH}))$ such that
\begin{equation}
\max_{t \in I} \lVert \phi(t) \rVert_{\scrH}<C,
\end{equation}
the quantity $\bE_c(U+\phi;I)$ is well-defined and larger than $\bE_c(U;I)$. Then, we say that $U$ is a \emph{local minimizer} of $\bE_c(\cdot;I)$.
\end{definition}
We assume the following property for local minimizers:
\begin{hyp}\label{hyp_regularity}
Assume that \ref{hyp_unbalanced} and \ref{hyp_smaller_space} hold. There exists a map $\FP:\scrL \to \scrL$ such that
\begin{equation}\label{FP_1}
\forall v \in \scrL, \hspace{2mm} \CE(\FP(v)) \leq \CE(v) \mbox{ and } \CE(\FP(v))=\CE(v) \Leftrightarrow \FP(v)=v,
\end{equation}
\begin{equation}\label{FP_2}
\forall (v_1,v_2) \in \scrL^2, \hspace{2mm} \lVert \FP(v_1)-\FP(v_2) \rVert_{\scrL} \leq \lVert v_1-v_2 \rVert_{\scrL},
\end{equation}
and
\begin{equation}\label{FP_3}
\FP|_{\scrF^\pm}=\mathrm{Id}|_{\scrF^\pm}.
\end{equation}

 Let $I\subset \R$, possibly unbounded and non-empty. Let $c>0$. If $\bW \in H^1_{\loc}(I,\scrL)$ is a local minimizer of $\bE_c(\cdot;I)$ in the sense of Definition \ref{def_local_minimizer}, which, additionally, is such that for all $t \in I$, $\bW(t)=\FP(\bW(t))$,  then $\bW \in \CA(I)$ where for any open set $O\subset \R$, $\CA(O)$ is defined as
\begin{equation}\label{CA_def}
\CA(O):= \CC^2_\loc(O,\scrL) \cap \CC^1_\loc(O,(\scrH,\lVert \cdot \rVert_{\scrH})) \cap \CC^0_\loc(O,(\tilde{\scrH},\lVert \cdot \rVert_{\tilde{\scrH}}))
\end{equation}
and $\bW$ solves
\begin{equation}
\bW''-D_\scrL\CE(\bW)=-c\bW' \mbox{ in } I,
\end{equation}
where $D_\scrL\CE$ was introduced in \eqref{DL}. 
\end{hyp}
Before stating the abstract result, we introduce the following constants:
\begin{equation}\label{intr_eta_0-}
\eta_0^-:= \min\left\{ \sqrt{e^{-1}\frac{r_0^-}{4}\sqrt{2(\kappa^-_{r_0^-/4}-a)}}, \frac{r_0^-}{4} \right\}>0,
\end{equation}
\begin{equation}\label{intr_r-}
\hat{r}^-:= \frac{r_0^-}{C^-+1}>0
\end{equation}
\begin{equation}\label{intr_eps_0-}
\eps_0^-:= \frac{1}{(C^-)^2(C^-+1)}\min\left\{ \frac{(\eta_0^-)^2}{4},\kappa_{\eta_0^-}^--a,\beta^-(\hat{r}^-),\beta^-(\eta_0^-)\right\}>0,
\end{equation}
\begin{equation}\label{sfC}
\sfC^\pm:= \frac{1}{2}(C^\pm)^2((C^\pm)^2+(C^\pm+1)^2)>0,
\end{equation}
\begin{equation}\label{gamma-}
\gamma^-:= \frac{1}{C^-+\sfC^-}>0
\end{equation}
and
\begin{equation}\label{d0}
d_0:= \dist_{\scrL}(\scrF^+_{r_0^+/2},\scrF^-_{r_0^-/2})>0,
\end{equation}
where the constants $C^-, \beta^-(\hat{r}^-), \beta^-(\eta_0^-)$ are those from \ref{hyp_projections_H} and $\kappa_r^\pm$ for $r>0$ are defined in \eqref{kappa_r}. The fact that $d_0>0$ follows from Lemma \ref{LEMMA_compactness} and \ref{hyp_unbalanced}. We now replace the perturbation assumption in \cite{oliver-bonafoux-tw} for a new one, the generalization of \ref{asu_conv_mon}. We define for $\alpha \in \R$ the level sets
\begin{equation}
\CE^\alpha:= \{ v \in \scrH: \CE(v) \leq \alpha\}
\end{equation}
and notice that \ref{hyp_unbalanced} and \ref{hyp_smaller_space} imply that there exists $\alpha_0 >0$ such that
\begin{equation}\label{alpha_0}
\CE^{\alpha_0}=\CE^{\alpha_0,-} \cup \CE^{\alpha_0,+}
\end{equation}
with $\CE^{\alpha_0,\pm}$ disjoint, $\scrL$-closed and such that
\begin{equation}\label{alpha_0+}
\CE^{\alpha_0,+} \subset \scrF^+_{r_0^+/2}.
\end{equation}
As a consequence, for each $\alpha \in [a,\alpha_0]$ we can write
\begin{equation}
\CE^{\alpha}=\CE^{\alpha,-} \cup \CE^{\alpha,+}
\end{equation}
with $\CE^{\alpha,\pm}$ disjoint, $\scrL$-closed and such that if $\alpha_1 \leq \alpha_2$ in $[a,\alpha_0]$, then $\CE^{\alpha_1,\pm} \subset \CE^{\alpha_2,\pm}$. In particular, we have
\begin{equation}\label{alpha-}
\forall \alpha \in [a,\alpha_0], \hspace{2mm} \CE^{\alpha,-} \cap \scrF^+_{r_0^+/2} = \emptyset.
\end{equation}
Next, notice that $\CE^{0,+}=\scrF^+$ and if $\alpha \in [a,0)$, then $\CE^{\alpha,+}=\emptyset$. Our next assumption writes then as follows:
\begin{hyp}\label{hyp_levelsets_bis}
Assume that \ref{hyp_unbalanced} and \ref{hyp_smaller_space} hold. We suppose that for all $\alpha \in [a,\alpha_0]$, we hav that $ \CE^{\alpha,-}$ is convex. For all $\bv^- \in \scrF^-$, $\alpha \in [a,\alpha_0]$ and $v \in \CE^{\alpha,-}$ define
\begin{equation}\label{A_alpha_set}
A(\alpha,\bv^-,v):=\{ \lambda \in \R:\CE( \bv^-+\lambda (v-\bv^-)) > \alpha \mbox{ and } \bv^-+\lambda (v-\bv^-) \in \CE^{\alpha_0,-}\}.
\end{equation}
Then, there exists $\alpha_{-} \in (a,0)$ such that
\begin{enumerate}
\item $\CE^{\alpha_{-}}\subset \scrF^-_{r_0^-/2}$.
\item There exists $\omega>0$ such that
\begin{equation}\label{omega_def}
\forall v \in \CE^{\alpha_0,-}, \forall \bv^- \in \scrF^-, \forall \theta \in A\left(\frac{\alpha_{-}+a}{2},\bv^-,v\right), \hspace{2mm} \frac{d}{d\lambda}(\CE(\bv^-+\lambda (v-\bv^-)))(\theta)\geq \omega.
\end{equation}
\item We have that for all $\alpha \in (a,\alpha_-]$, there exists $\omega(\alpha)>0$ such that, for all $v \in \CE^{\alpha_0,-}$ such that $\CE(v) \geq \alpha$, it holds
\begin{equation}\label{local_monotonicity}
 \exists \delta(v) > 0, \forall \theta \in (1-\delta(v),1+\delta(v)), \hspace{2mm}  \frac{d}{d\lambda}(\CE(P^-(v)+\lambda (v-P^-(v))))(\theta)\geq \omega(\alpha),
\end{equation}
where $P^-$ is the projection map defined in \ref{hyp_unbalanced}, which is well defined in $\CE^{\alpha_-}$ because $\CE^{\alpha_-} \subset \scrF_{r_0^-/2}^-$.
\end{enumerate}
Finally, we have the following global bound property: There exists $R>0$ such that for any $v \in \CE^{\alpha_0,-}$ there exists $\bv^-(v) \in \scrF^-$ such that $\lVert v-\bv^- (v)\rVert_{\scrH} \leq R$.
\end{hyp} 
Let us next define the space
\begin{align}\label{X}
X:= \left\{ U \in H^1_{\loc}(\R,\scrL): \exists T \geq 1, \right. & 
\forall t \geq T, \hspace{2mm} \dist_{\scrL}(U(t),\scrF^+) \leq \frac{r_0^+}{2}, \\ &\left.
\forall t \leq -T, \hspace{2mm} \dist_{\scrL}(U(t),\scrF^-) \leq \frac{r_0^-}{2} \right\},
\end{align}
already introduced in \cite{oliver-bonafoux-tw}. If \ref{hyp_levelsets_bis} holds, we define the class
\begin{align}\label{Y}
Y:= \left\{ U \in H^1_{\loc}(\R,\scrL): \exists T \geq 1, \right. & 
\forall t \geq T, \hspace{2mm} \dist_{\scrL}(U(t),\scrF^+) \leq \frac{r_0^+}{2}, \\ &\left.
\forall t \leq -T, \hspace{2mm} \CE(U(t)) \leq \alpha_- \right\}
\end{align}
so that $Y \subset X$ by \ref{hyp_levelsets_bis}. For working under assumption \ref{hyp_levelsets_bis}, the space $Y$ becomes more suitable in certain aspects. We can now finally state the abstract result
\begin{theorem}[Main abstract result]\label{THEOREM-ABSTRACT}
Assume that \ref{hyp_compactness}, \ref{hyp_projections_inverse}, \ref{hyp_projections_H}, \ref{hyp_regularity} and \ref{hyp_levelsets_bis} hold. Then, it holds
\begin{enumerate}
\item \textbf{Existence}. There exists $c^\star>0$ and $\bU \in \CA(\R) \cap Y$, $\CA(\R)$ as in \eqref{CA_def}, such that $(c^\star,\bU)$ solves \eqref{abstract_equation} with conditions at infinity \eqref{abstract_bc_weak} and $\bU$ is a global minimizer of $\bE_c$ in $Y$ (that is, $\bE_c(\bU)=0$). 
\item \textbf{Uniqueness of the speed}. The speed $c^\star$ is unique in the following sense: if $\overline{c^\star}>0$ is such that
\begin{equation}
\inf_{U \in Y}\bE_{\overline{c^\star}}(U) =0
\end{equation}
and there exists $\overline{\bU} \in \CA(\R) \cap X$ such that $(\overline{c^\star},\overline{\bU})$ solves \eqref{abstract_equation} and $\bE_{\overline{c^\star}}(\overline{\bU})<+\infty$, then $\overline{c^\star}=c^\star$. 
\item \textbf{Exponential convergence}. We have that for some $M^+>0$ it holds for all $t \in \R$
\begin{equation}\label{exponential_abstract}
\lVert \bU(t)-\bv^+(U) \rVert_{\scrL} \leq M^+e^{-c^\star t}
\end{equation}
where $\bv^+(U)$ is given by \eqref{abstract_bc_weak}.
\end{enumerate}
\end{theorem}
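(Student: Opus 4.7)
The plan is to construct $(c^\star, \bU)$ as a limit of minimizers of a family of \emph{constrained} minimization problems indexed by $c > 0$, following the variational scheme of \cite{alikakos-katzourakis, oliver-bonafoux-tw}. The scaling identity $\bE_c(U(\cdot + \tau)) = e^{-c\tau} \bE_c(U)$ forces $\inf_Y \bE_c \in \{0, -\infty\}$, so direct minimization on $Y$ cannot yield a nontrivial profile. To break translation invariance, I would anchor curves by a constraint such as fixing the first time at which $U$ enters the sublevel set $\CE^{\alpha_-}$, or by imposing $\dist_\scrL(U(0), \scrF^-) = r$ for a well-chosen intermediate $r$. The direct method then applies to the anchored class: weak lower semicontinuity of $\CE$ from \ref{hyp_unbalanced}, the coercivity (\ref{coercivity}) of \ref{hyp_smaller_space}, and the compactness \ref{hyp_compactness} yield constrained minimizers $\bU_c$ with value $m(c) \in [-\infty, 0]$. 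Inserting the retraction $\FP$ from \ref{hyp_regularity} into a minimizing sequence, one may further assume $\FP(\bU_c(t)) = \bU_c(t)$ for all $t$, so that \ref{hyp_regularity} gives $\bU_c \in \CA$ satisfying the Euler--Lagrange equation (\ref{abstract_equation}) on the complement of the constraint set.

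\textbf{Main obstacle.} The crux of the argument, and the main place where assumption \ref{hyp_levelsets_bis} is used, is to exclude degenerate oscillatory behavior of each constrained minimizer $\bU_c$ on $(-\infty, 0]$. Concretely, I would aim to show that $t \mapsto \CE(\bU_c(t))$ is eventually monotone on some tail $(-\infty, T^-]$, so that $\bU_c(t)$ converges in $\scrL$ (hence in $\scrH$, by \ref{hyp_projections_H}) to an element of $\scrF^-$ as $t \to -\infty$; without such rigidity, $\bU_c$ could execute arbitrary excursions into $\{\CE < 0\}$ and drive $m(c)$ to $-\infty$ for every $c$. In \cite{alikakos-katzourakis} the non-degeneracy of the wells allowed this to be done via an ODE argument. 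Here, I would instead exploit items 1--3 of \ref{hyp_levelsets_bis}: the convexity of the sublevel sets $\CE^{\alpha,-}$ together with the uniform positive lower bounds $\omega, \omega(\alpha)$ on the derivative of $\CE$ along affine segments emanating from $\scrF^-$. Heuristically, if $\bU_c$ were to exit and then re-enter some $\CE^\alpha$, convexity would allow one to replace the intermediate arc by a curve lying in $\CE^\alpha$ and then radially contract the loop toward $\scrF^-$ along segments; the uniform monotonicity forces this replacement to strictly decrease $\bE_c$, contradicting minimality. The global $\scrH$-bound stated in \ref{hyp_levelsets_bis} keeps the modified competitors in the correct function space, and \ref{hyp_projections_inverse} is what allows the asymptotic endpoint $\bv^-(v)$ to be ``slid'' when $\scrF^-$ is unbounded. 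This step is more delicate than in \cite{alikakos-katzourakis} because the projection $P^-$ is no longer trivial and the competitor constructions must respect the geometry of $\scrF^-$.

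\textbf{Conclusion.} Granted this rigidity, the map $c \mapsto m(c)$ takes values in $\{0, -\infty\}$ and is non-decreasing, so I set $c^\star := \sup\{ c > 0 : m(c) = -\infty \}$; the quantitative constants (\ref{intr_eta_0-})--(\ref{gamma-}) provide lower bounds on the cost of any admissible transition from $\scrF^-$ to $\scrF^+$, which together with the assumptions force $0 < c^\star < +\infty$. Passing to the limit as $c_n \uparrow c^\star$ after suitable translations to normalize the constraint, and applying \ref{hyp_compactness}, \ref{hyp_projections_inverse}, and \ref{hyp_projections_H}, produces a limit $\bU \in Y$ with $\bE_{c^\star}(\bU) = 0$; \ref{hyp_regularity} then gives $\bU \in \CA(\R)$, the ODE (\ref{abstract_equation}), and the $\scrH$-convergence to $\scrF^+$ in (\ref{abstract_bc_weak}) from the integrability of $\CE(\bU) \cdot e^{c^\star t}$. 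For uniqueness of the speed (item 2), multiplying (\ref{abstract_equation}) by $\bU' e^{ct}$ and integrating yields the virial identity $\bE_c(\bU) = 0$ for any finite-energy solution, and a strict monotonicity argument on $c \mapsto m(c)$ identifies $\overline{c^\star}$ with the threshold $c^\star$. Exponential convergence (item 3) follows by linearizing near $\scrF^+$: estimate (\ref{H_local_est}) converts the energy tail into $\scrH$-control on $\bU - P^+_\scrH(\bU)$, and a Gronwall-type differential inequality on the truncated weighted energy $\int_t^{+\infty} \be_{c^\star}(\bU)(s) \, ds$ yields decay with rate $b > c^\star/2$.
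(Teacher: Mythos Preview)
Your overall scheme is in the right spirit, but the constraint mechanism you propose and the way you handle the threshold both diverge from the paper and contain a genuine gap.

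\textbf{The constraint is different, and yours is problematic.} The paper does not anchor by a pointwise condition such as $\dist_\scrL(U(0),\scrF^-)=r$. Instead it works in the classes $Y_T$ (two-sided tail constraints: $\CE(U(t))\leq\alpha_-$ for $t\leq -T$ and $\dist_\scrL(U(t),\scrF^+)\leq r_0^+/2$ for $t\geq T$), and minimizes $\bE_c$ over $Y_T$ for each fixed $T\geq 1$. This is essential: with a tail constraint the infimum $\bb_{c,T}$ is automatically finite (Lemma~\ref{LEMMA_PSI_YT}), and the minimizer $\bV_{c,T}$ solves the equation on $(-T,T)$ and wherever the constraint is not saturated (Lemma~\ref{LEMMA_bcT_solution}). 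With your point anchor, nothing prevents the curve from sitting in $\{\CE<0\}$ on a long interval to the \emph{right} of the anchor, where the weight $e^{ct}$ is large, so the direct method does not obviously produce a minimizer; and the translation identity $\bE_c(U(\cdot+\tau))=e^{-c\tau}\bE_c(U)$, which forces the dichotomy $\{0,-\infty\}$, is a property of the \emph{unconstrained} infimum on $Y$ and does not transfer to your anchored infimum, so your claim that $m(c)\in\{0,-\infty\}$ is unjustified.

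\textbf{You are missing the quantitative ODE step.} Your heuristic for the ``main obstacle'' (convex projection onto $\CE^{\alpha,-}$, then radial contraction toward $\scrF^-$) is exactly the content of Lemmas~\ref{LEMMA_comparison_bis_1} and~\ref{LEMMA_comparison_bis_2}, and it does yield eventual monotonicity of $t\mapsto\CE(\bV_{c,T}(t))$ on the far left. But that alone is not enough to pass to the limit as $c_n\to c^\star$: one needs a bound on the \emph{transition time} that is uniform in $T$. The paper obtains this via an ODE argument that you explicitly say you would avoid: on the interval $(t_1^-,t_2^-)$ where $\CE(\bV_{c,T})\in(\alpha_-,\alpha_0)$, item~2 of \ref{hyp_levelsets_bis} gives $\langle D_\scrL\CE(\bV_{c,T}(t)),\bV_{c,T}(t)-\bv^-\rangle_\scrL\geq\omega>0$, so $\rho(t):=\lVert\bV_{c,T}(t)-\bv^-\rVert_\scrL^2$ satisfies $\rho''+c\rho'\geq\omega$ (Proposition~\ref{PROPOSITION_comparison_bis_1}); combined with the global bound $R$ from \ref{hyp_levelsets_bis} this yields $t_2^--t_1^-\leq\sfT_1(c)$. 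A separate positivity argument gives $t^+-t_2^-\leq\sfT_2(c)$. After translating so that $t^+=0$, all the $\bV_{c_n,T_n}$ lie in a \emph{fixed} $Y_{\sfT_\sstar}$; then Lemma~\ref{LEMMA_LSC_YT} and $c_n\in\CD$, $c^\star\notin\CD$ produce $\bU$ with $\bE_{c^\star}(\bU)=0$, and the monotonicity from the comparison lemmas shows the tail constraints are \emph{strict}, so the equation holds on all of $\R$ (Proposition~\ref{PROPOSITION_existence_bis}). Without the transition-time bound you cannot place the translated minimizers in a common compactness class, and the limit step fails.

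For uniqueness and exponential convergence your sketches are closer to the paper (Proposition~\ref{PROPOSITION-uniqueness} and Lemma~\ref{LEMMA_conv_sol_+} with $b=(c^\star+\gamma^+)/2$), though the paper's uniqueness argument does not quite reduce to a single virial identity: it uses that each candidate speed has nonnegative energy on the other's profile and then compares.
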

If $\scrF^-$ is reduced to a simple point and \ref{hyp_levelsets_bis} holds, we can shown convergence with respect to the $\scrL$-norm as $t \to -\infty$:
\begin{theorem}\label{THEOREM_abstract_isolated}
Assume that \ref{hyp_compactness}, \ref{hyp_projections_inverse}, \ref{hyp_projections_H}, \ref{hyp_regularity} and \ref{hyp_levelsets_bis} hold. Assume moreover that $\scrF^-=\{\bv^-_i\}$. Then, if $(c^\star,\bU)$ is the solution given by Theorem \ref{THEOREM-ABSTRACT}, $\bU$ satisfies in addition
\begin{equation}\label{abstract_bc_L}
\lim_{t \to -\infty}\lVert \bU(t)-\bv^-_i \rVert_{\scrL}=0.
\end{equation}
\end{theorem}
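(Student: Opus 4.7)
The strategy is to work with the squared $\scrL$-distance $\phi(t) := \lVert \bU(t) - \bv^-_i \rVert_{\scrL}^2$ and establish $\phi(t) \to 0$ via a differential inequality. Since $\bU \in \CA(\R) \cap Y$ is given by Theorem \ref{THEOREM-ABSTRACT}, for some $T \geq 1$ we have $\CE(\bU(t)) \leq \alpha_-$, hence $\bU(t) \in \scrF^-_{r_0^-/2}$, for every $t \leq -T$; in particular $\phi$ is $\CC^2$ on $\R$ and bounded above by $(r_0^-/2)^2$ on $(-\infty, -T]$. From the ODE $\bU'' = D_\scrL \CE(\bU) - c^\star \bU'$ one computes
\begin{equation*}
\phi''(t) + c^\star \phi'(t) = 2\lVert \bU'(t) \rVert_{\scrL}^2 + 2\langle D_\scrL \CE(\bU(t)), \bU(t) - \bv^-_i \rangle_{\scrL}.
\end{equation*}
The convexity of $\CE^{\alpha_-}$ (item 1 of \ref{hyp_levelsets_bis}), together with the fact that $\bv^-_i \in \CE^{\alpha_-}$ realizes the global minimum $a$, yields that $\lambda \mapsto \CE((1-\lambda)\bv^-_i + \lambda \bU(t))$ is non-decreasing on $[0,1]$: any intermediate point is a convex combination of $\bv^-_i$ (where $\CE$ is minimal) and a later point of the segment, both lying in the convex set $\CE^{\alpha_-}$. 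Differentiating at $\lambda = 1$ gives $\langle D_\scrL \CE(\bU(t)), \bU(t) - \bv^-_i\rangle_{\scrL} \geq 0$ for $t \leq -T$, so the identity above forces $\phi'' + c^\star \phi' \geq 0$ there.

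\textbf{Monotonicity at $-\infty$.} The inequality $(e^{c^\star t}\phi'(t))' \geq 0$ on $(-\infty, -T]$ shows that $e^{c^\star t}\phi'(t)$ is non-decreasing, hence has a limit $L$ as $t \to -\infty$. To prove $L = 0$, I use that, thanks to $\bE_{c^\star}(\bU) = 0$ and the exponential decay $\lVert \bU(t) - \bv^+(\bU)\rVert_{\scrH} \lesssim e^{-bt}$ with $b > c^\star/2$ from Theorem \ref{THEOREM-ABSTRACT}, both $\int_\R \lvert \CE(\bU(s))\rvert e^{c^\star s}ds$ and therefore $\int_\R \lVert \bU'(s)\rVert_{\scrL}^2 e^{c^\star s}ds$ are finite. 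A mean-value extraction on intervals $(-n-1,-n)$ then produces a sequence $s_n \to -\infty$ along which $\lVert \bU'(s_n)\rVert_{\scrL}^2 e^{c^\star s_n} \to 0$; combined with the Cauchy--Schwarz bound $|\phi'| \leq r_0^-\lVert \bU'\rVert_{\scrL}$ valid on $(-\infty, -T]$, this yields $e^{c^\star s_n}\phi'(s_n) \to 0$, hence $L = 0$. Consequently $\phi'(t) \geq 0$ and $\phi$ admits a limit $\phi_{-\infty} \in [0, (r_0^-/2)^2]$ at $-\infty$.

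\textbf{Ruling out $\phi_{-\infty} > 0$ and the main obstacle.} Suppose $\phi_{-\infty} > 0$. By monotonicity and (\ref{coercivityL}) we get $\CE(\bU(t)) \geq \alpha^* := a + (C^-)^{-1} \phi_{-\infty} \in (a, \alpha_-]$ for all $t \leq -T$. Because $\scrF^- = \{\bv^-_i\}$, the projection $P^-$ appearing in item 3 of \ref{hyp_levelsets_bis} is identically $\bv^-_i$, and evaluating (\ref{local_monotonicity}) at $\theta = 1$ yields the quantitative bound $\langle D_\scrL \CE(\bU(t)), \bU(t) - \bv^-_i\rangle_{\scrL} \geq \omega(\alpha^*) > 0$. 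This upgrades the differential inequality to $\phi'' + c^\star \phi' \geq 2\omega(\alpha^*)$; integrating $(e^{c^\star t}\phi')' \geq 2\omega(\alpha^*) e^{c^\star t}$ from $s_n$ to $t \leq -T$ and letting $n \to \infty$ (using $e^{c^\star s_n}\phi'(s_n) \to 0$) produces $\phi'(t) \geq 2\omega(\alpha^*)/c^\star$ uniformly, which forces $\phi(t) \to -\infty$ as $t \to -\infty$, contradicting $\phi \geq 0$. Hence $\phi_{-\infty} = 0$, which is the claim. The genuine technical difficulty is anchoring the boundary term at $-\infty$, i.e., producing a sequence with $e^{c^\star s_n}\phi'(s_n) \to 0$: this is where one must couple the local ODE argument to the global minimality of $\bU$, through the $e^{c^\star \cdot}$-weighted $L^2$-integrability of $\bU'$ that comes from $\bE_{c^\star}(\bU) = 0$ combined with the exponential decay at $+\infty$.
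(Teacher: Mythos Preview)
Your proof is correct and follows the same overall Lyapunov strategy as the paper's Lemma \ref{LEMMA_isolated}: study the squared distance $\phi(t)=\lVert \bU(t)-\bv_i^-\rVert_\scrL^2$, derive $\phi''+c^\star\phi'\geq 0$ from the equation and the radial monotonicity of $\CE$ near $\bv_i^-$, show $\phi$ has a limit at $-\infty$, and if that limit were positive upgrade to $\phi''+c^\star\phi'\geq 2\omega>0$ via item~3 of \ref{hyp_levelsets_bis}, yielding a contradiction.

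Two remarks. First, your one-line justification of the monotonicity of $\lambda\mapsto\CE((1-\lambda)\bv_i^-+\lambda\bU(t))$ is slightly imprecise: knowing only that $\CE^{\alpha_-}$ is convex is not enough. What actually gives the monotonicity is that \emph{every} sublevel set $\CE^{\alpha,-}$ with $a\leq\alpha\leq\alpha_-$ is convex (this is part of \ref{hyp_levelsets_bis}): given $\lambda_1<\lambda_2$, both $\bv_i^-$ and $v_{\lambda_2}$ lie in the convex set $\CE^{\CE(v_{\lambda_2}),-}$, hence so does $v_{\lambda_1}$. Alternatively, item~3 with $P^-\equiv\bv_i^-$ and $\theta=1$ gives directly $\langle D_\scrL\CE(\bU(t)),\bU(t)-\bv_i^-\rangle_\scrL\geq 0$, which is all you need.

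Second, the route to monotonicity of $\phi$ is where you and the paper diverge. The paper argues purely from the differential inequality: $\phi''+c^\star\phi'\geq 0$ forbids strict interior maxima, so $\phi$ is eventually monotone; this uses no global information and in fact proves the stronger Lemma \ref{LEMMA_isolated} for \emph{any} constrained minimizer $\bV_{c,T}$. You instead exploit the global identity $\bE_{c^\star}(\bU)=0$ to get $\int_\R\lVert\bU'\rVert_\scrL^2 e^{c^\star s}\,ds<\infty$, extract a sequence $s_n\to-\infty$ with $e^{c^\star s_n}\phi'(s_n)\to 0$, and use the integrating factor $(e^{c^\star t}\phi')'\geq 0$ to conclude $\phi'\geq 0$. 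Your route is perhaps cleaner for the specific $(c^\star,\bU)$ at hand and gives the sharper information $\phi'\geq 0$ everywhere on $(-\infty,-T]$, while the paper's no-local-maximum argument is more self-contained and applies in the broader constrained setting.
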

\begin{remark}
Notice that \eqref{abstract_bc_L} \textit{does not} imply that $\CE(\bU(t)) \to a$ as $t \to -\infty$, since $\CE$ is only supposed to be lower-semicontinuous with respect to $\scrL$-convergence.
\end{remark}
We can impose an additional assumption in order to obtain stronger conditions at $- \infty$ on the solution:
\begin{hyp}\label{hyp_convergence}
Hypothesis \ref{hyp_levelsets_bis} is fulfilled and, additionally:
\begin{equation}\label{a_convergence_bis}
-a < \frac{(d_{\alpha_0}\gamma^-)^2}{2}
\end{equation}
where $\gamma^-$ is as in \eqref{gamma-} and
\begin{equation}\label{d_alpha_0}
d_{\alpha_0}:= \dist_{\scrL}(\CE^{\alpha_0,-},\scrF^+_{r_0^+/2})
\end{equation}
which is positive by \ref{hyp_levelsets_bis} and Lemma \ref{LEMMA_compactness}.
\end{hyp}
If \ref{hyp_convergence} holds, we can show the following:
\begin{theorem}\label{THEOREM_abstract_bc}
Assume that  \ref{hyp_compactness}, \ref{hyp_projections_inverse}, \ref{hyp_projections_H}, \ref{hyp_regularity} and \ref{hyp_convergence} hold. Finally, assume that \ref{hyp_convergence} holds. Then, if $(c^\star,\bU)$ is the solution given by Theorem \ref{THEOREM-ABSTRACT}, it holds that $c^\star<\gamma^-$, $\gamma^-$ as in \eqref{gamma-} and there exists $M^->0$ such that for all $t \in \R$
\begin{equation}\label{abstract_bc_strong}
\lVert \bU(t)-\bv^-(\bU) \rVert_{\scrL}  \leq M^-e^{(\gamma^--c^\star)t}
\end{equation}
for some $\bv^-(U) \in \scrF^-$.
\end{theorem}
Finally, we have the characterization of the speed:
\begin{theorem}\label{THEOREM_abstract_speed}
Assume that  \ref{hyp_compactness}, \ref{hyp_projections_inverse}, \ref{hyp_projections_H}, \ref{hyp_regularity} and \ref{hyp_convergence} hold. Finally, assume that \ref{hyp_convergence} holds. Let $(c^\star,\bU)$ be the solution given by Theorem \ref{THEOREM-ABSTRACT}. Then, if $\tilde{\bU} \in \CA(\R) \cap Y$ is such that
\begin{equation}
\bE_{c^\star}(\tilde{\bU}) =0
\end{equation}
then we have that $(c^\star,\tilde{\bU})$ solves \eqref{abstract_equation} and
\begin{equation}\label{abstract_speed_formula}
c^\star=\frac{-a}{\int_\R \lVert \tilde{\bU}'(t) \rVert_{\scrL}^2dt}.
\end{equation}
In particular, the quantity $\int_\R \lVert \tilde{\bU}'(t) \rVert_{\scrL}^2dt$ is finite. Moreover, we have that
\begin{equation}
c^\star \leq \frac{\sqrt{-2a}}{d_{\alpha_0}} <\gamma^-,
\end{equation}
where we have used \ref{hyp_convergence} and $d_{\alpha_0}$, $\gamma^-$ are as in \eqref{d_alpha_0} and \eqref{gamma-} respectively. Finally, we have
\begin{equation}\label{abstract_speed_variational}
c^\star=\sup\{c>0: \inf_{U \in Y}\bE_c(U) = -\infty\}=\inf\{ c>0: \inf_{U \in Y}\bE_c(U)=0\}.
\end{equation}
\end{theorem}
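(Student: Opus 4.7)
The plan is to handle the three claims in sequence. For the first claim—that any $\tilde{\bU}\in\CA(\R)\cap Y$ with $\bE_{c^\star}(\tilde{\bU})=0$ already solves (\ref{abstract_equation})—I first show $\FP(\tilde{\bU})=\tilde{\bU}$ a.e. By (\ref{FP_1})-(\ref{FP_3}), $\FP$ is $1$-Lipschitz on $\scrL$, pointwise decreases $\CE$, and fixes $\scrF^\pm$, so the competitor $\FP\circ\tilde{\bU}$ still belongs to $Y$ (at $+\infty$ the condition $\dist_\scrL(\cdot,\scrF^+)\leq r_0^+/2$ is preserved thanks to $1$-Lipschitzness combined with $\FP|_{\scrF^+}=\mathrm{Id}$, while at $-\infty$ the $\CE$-level $\alpha_-$ only improves) and satisfies $\bE_{c^\star}(\FP\circ\tilde{\bU})\leq\bE_{c^\star}(\tilde{\bU})=0=\inf_Y\bE_{c^\star}$. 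Equality forces $\CE(\FP(\tilde{\bU}(t)))=\CE(\tilde{\bU}(t))$ a.e., whence the rigidity in (\ref{FP_1}) yields $\FP(\tilde{\bU})=\tilde{\bU}$ a.e. As $\tilde{\bU}$ is a fortiori a local minimizer in the sense of Definition~\ref{def_local_minimizer}, \ref{hyp_regularity} gives the ODE.

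\textbf{Speed formula.} Introduce the Hamiltonian $H(t):=\tfrac12\lVert\tilde{\bU}'(t)\rVert_\scrL^2-\CE(\tilde{\bU}(t))$. Pairing the ODE with $\tilde{\bU}'$ in $\scrL$ and using the compatibility (\ref{abstract_ipp}) gives $H'(t)=-c^\star\lVert\tilde{\bU}'(t)\rVert_\scrL^2\leq 0$, so $H$ is non-increasing. Integrating over $\R$ yields $H(+\infty)-H(-\infty)=-c^\star\int_\R\lVert\tilde{\bU}'\rVert_\scrL^2\,dt$. At $+\infty$, the exponential convergence from part~3 of Theorem~\ref{THEOREM-ABSTRACT} combined with the a priori bound (\ref{elliptic_conclusion2}) and the $\scrH$-continuity of $\CE$ force $\lVert\tilde{\bU}'\rVert_\scrL\to 0$ and $\CE(\tilde{\bU})\to 0$, so $H(+\infty)=0$. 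At $-\infty$, Theorem~\ref{THEOREM_abstract_bc} (available since \ref{hyp_convergence} is assumed) gives $\tilde{\bU}(t)\to\bv^-(\tilde{\bU})\in\scrF^-$ in $\scrH$, so $\CE(\tilde{\bU})\to a$; the monotonicity of $H$ together with the identity $\lVert\tilde{\bU}'\rVert_\scrL^2=2H+2\CE(\tilde{\bU})$ and an ODE-bootstrap ruling out a positive limit for $\lVert\tilde{\bU}'\rVert_\scrL^2$ (which would contradict the $\scrH$-convergence of $\tilde{\bU}$) produce $H(-\infty)=-a$. Substituting yields $c^\star=-a/\int_\R\lVert\tilde{\bU}'\rVert_\scrL^2\,dt$ and, along the way, the finiteness of the integral.

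\textbf{Min-max characterization.} The identity $\tfrac12\lVert\tilde{\bU}'\rVert^2+\CE(\tilde{\bU})=\lVert\tilde{\bU}'\rVert^2-H=-H'/c^\star-H$ combined with integration by parts gives, for every $c$ in a neighborhood of $(0,c^\star]$,
\begin{equation*}
\bE_c(\tilde{\bU})=-\frac{1}{c^\star}\int_\R H'(t)e^{ct}\,dt-\int_\R H(t)e^{ct}\,dt=\frac{c-c^\star}{c^\star}\int_\R H(t)e^{ct}\,dt,
\end{equation*}
the boundary terms vanishing because $H$ decays faster than $e^{-c^\star t}$ at $+\infty$ (by Step 2) while $e^{ct}H(t)\to 0$ at $-\infty$ by boundedness of $H$. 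Since $H\geq 0$ with $H(-\infty)=-a>0$, the integral is strictly positive, so $\bE_c(\tilde{\bU})<0$ for every $c\in(0,c^\star)$; the translation identity $\bE_c(V(\cdot+\tau))=e^{-c\tau}\bE_c(V)$ already used in (\ref{inf_v}) upgrades this to $\inf_Y\bE_c=-\infty$. Hence $(0,c^\star)\subset\{c:\inf_Y\bE_c=-\infty\}$ and $\sup\geq c^\star$; the reverse inequality $\sup\leq c^\star$ rests on the uniqueness of the speed (part~2 of Theorem~\ref{THEOREM-ABSTRACT}), as any $c_0>c^\star$ with $\inf_Y\bE_{c_0}=-\infty$ would, by re-running the existence scheme of Theorem~\ref{THEOREM-ABSTRACT} over $[c^\star,c_0]$, produce a second critical value with an attained minimum, contradicting uniqueness. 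The equality $\inf\{c:\inf_Y\bE_c=0\}=c^\star$ is then immediate: $c^\star$ belongs to this set by Theorem~\ref{THEOREM-ABSTRACT}, and for $c<c^\star$ the infimum is $-\infty\neq 0$.

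\textbf{Main obstacle.} The most delicate step is showing $\lVert\tilde{\bU}'(t)\rVert_\scrL\to 0$ at $-\infty$: the $\scrH$-convergence provided by Theorem~\ref{THEOREM_abstract_bc} does not a priori transfer to the derivative, so one must propagate it through the ODE using the monotone Hamiltonian $H$ together with a priori estimates in the spirit of (\ref{elliptic_conclusion2}). All the other ingredients, notably the integration by parts in Step 3 and the appeal to uniqueness for the sup-inequality, are routine once the correct exponential rates at $\pm\infty$ have been pinned down.
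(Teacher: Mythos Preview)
The paper delegates this proof entirely to \cite{oliver-bonafoux-tw}, so there is no in-paper argument to compare against; I assess your outline on its own merits.

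Your first two steps are essentially correct, with one caveat on citations. Theorem~\ref{THEOREM-ABSTRACT} part~3 and Theorem~\ref{THEOREM_abstract_bc} are stated only for the particular solution $\bU$ built in Proposition~\ref{PROPOSITION_existence_bis}, not for an arbitrary minimizer $\tilde{\bU}$. What you actually need is that $\tilde{\bU}$, being a global minimizer of $\bE_{c^\star}$ on $Y$, is in particular a constrained minimizer on $Y_T$ for every $T$ large enough, so that Corollary~\ref{COROLLARY_bcT_solution} (choosing $T\geq\max\{-t_0^-,t^+\}$ gives $S_{Y,c^\star,T}=\R$) and Lemmas~\ref{LEMMA_conv_sol_+} and~\ref{LEMMA_conv_sol_-} apply directly to $\tilde{\bU}$; the hypothesis $c^\star<\gamma^-$ required in Lemma~\ref{LEMMA_conv_sol_-} comes from combining (\ref{sup_D_bound}) with \ref{hyp_convergence}. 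Once these are in place your Hamiltonian argument for (\ref{abstract_speed_formula}) goes through, and you do not need the ad hoc ``ODE-bootstrap'' you flag as the main obstacle: (\ref{solutions_limit-_derivative}) already gives $\lVert\tilde{\bU}'\rVert_\scrL\to0$ at $-\infty$.

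Your third step has a genuine gap. The integration-by-parts formula you display discards the boundary term $\lim_{t\to+\infty}H(t)e^{ct}$, but Lemma~\ref{LEMMA_conv_sol_+} only yields $H(t)=o\bigl(e^{-(c^\star+\gamma^+)t/2}\bigr)$, so this limit vanishes only for $c<(c^\star+\gamma^+)/2$; nothing in the hypotheses forces $\gamma^+>c^\star$, hence your identity for $\bE_c(\tilde{\bU})$ may fail on part of $(0,c^\star)$. The min-max characterization is obtained far more directly: Corollary~\ref{COROLLARY_CD} already gives $\CD=(0,c^\star)$, and combining the definition (\ref{set_CD}) of $\CD$ with the dichotomy $\inf_{U\in Y}\bE_c(U)\in\{-\infty,0\}$ recorded in (\ref{inf_v}) immediately yields (\ref{abstract_speed_variational}). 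Your ``re-running the existence scheme'' for the upper bound on $\sup$ is in effect a rederivation of that corollary.
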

\section{Proof of the abstract results}
The strategy is essentially the same that the one used in \cite{oliver-bonafoux-tw}, which is inspired by  \cite{alikakos-katzourakis}. See also the book by Alikakos, Fusco and Smyrnelis \cite{alikakos-fusco-smyrnelis}. We reproduce the main steps and provide the proof of the new results, which are essentially those in subsection \ref{subs_comparison}, for which we use \ref{hyp_levelsets_bis} instead of the bound assumption that we made in \cite{oliver-bonafoux-tw}. Therefore, each result here has its equivalent one in \cite{oliver-bonafoux-tw} and in some cases the arguments adapt in a straightforward way or the result can even be applied directly.
\subsection{Preliminaries}
Let $r_0^-$ and $r_0^+$ be the constants introduced before and $\scrF_{r_0^\pm/2}^\pm$ be the corresponding closed balls as in \eqref{Fr}. Assume that \ref{hyp_unbalanced} holds. For $T \geq 1$, we define the sets
\begin{equation}
X_{T}^-:= \left\{ U\in H^1_{\loc}(\R,\scrL): \forall t \leq -T, \hspace{1mm} U(t) \in \scrF^-_{r_0^-/2} \right\},
\end{equation}
\begin{equation}
X_{T}^+:= \left\{ U \in H^1_{\loc}(\R,\scrL): \forall t \geq T, \hspace{1mm}U(t) \in \scrF^+_{r_0^+/2} \right\},
\end{equation}
where the constant . Subsequently,  we set
\begin{equation}\label{XT}
X_{T}:=X_{T}^- \cap X_{T}^+.
\end{equation}
In case \ref{hyp_levelsets_bis} holds, we can define for $T \geq 1$
\begin{equation}
Y_T^-:= \left\{ U\in H^1_{\loc}(\R,\scrL): \forall t \leq -T, \hspace{1mm} \CE(U(t)) \leq \alpha_- \right\},
\end{equation}
where $\alpha_-$ is the constant introduced in assumption \ref{hyp_levelsets_bis}.  Subsequently, we set
\begin{equation}\label{YT}
Y_T:= Y_T^- \cap X_T^+.
\end{equation}
The spaces $Y_T$ will play here the role that the spaces $X_T$ played in \cite{oliver-bonafoux-tw}. We have that
\begin{equation}
X=\bigcup_{T \geq 1}X_T
\end{equation}
and
\begin{equation}
Y=\bigcup_{T \geq 1}Y_T.
\end{equation}

The following property is immediate:
\begin{lemma}\label{LEMMA_YT}
Assume that  \ref{hyp_levelsets_bis} holds. Then, we have that
\begin{equation}\label{YT_inclusion}
Y_T \subset X_T
\end{equation}
\end{lemma}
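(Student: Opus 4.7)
The plan is to observe that the inclusion reduces, by construction of the sets, to a pointwise level-set inclusion, and that this pointwise inclusion is precisely one of the standing assumptions bundled into \ref{hyp_levelsets_bis}. More concretely, since both $Y_T$ and $X_T$ are defined as the intersection with the common factor $X_T^+$, namely
\begin{equation}
Y_T = Y_T^- \cap X_T^+, \qquad X_T = X_T^- \cap X_T^+,
\end{equation}
it suffices to prove $Y_T^- \subset X_T^-$. Thus I fix $U \in Y_T^-$, pick an arbitrary $t \leq -T$, and unpack: by definition of $Y_T^-$, one has $\CE(U(t)) \leq \alpha_-$, so $U(t) \in \CE^{\alpha_-}$ (where $\CE^{\alpha_-}$ makes sense as a subset of $\scrH$, since by \ref{hyp_smaller_space} any $v \in \scrL$ with $\CE(v) < +\infty$ automatically belongs to $\scrH$, and in particular this applies to $U(t)$ whenever $\CE(U(t)) \leq \alpha_- < 0$).

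I then invoke item 1 of \ref{hyp_levelsets_bis}, which gives exactly the level-set containment $\CE^{\alpha_-} \subset \scrF^-_{r_0^-/2}$. Chaining the two inclusions yields $U(t) \in \scrF^-_{r_0^-/2}$ for all $t \leq -T$, which is the defining property of $X_T^-$. Hence $U \in X_T^- \cap X_T^+ = X_T$, proving \eqref{YT_inclusion}.

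There is essentially no obstacle here: the lemma is a direct bookkeeping consequence of the way $\alpha_-$ was chosen in \ref{hyp_levelsets_bis}, and the proof is a single chain of inclusions. The only minor point worth flagging, for the reader following the $\scrL$ vs.\ $\scrH$ distinction throughout the abstract setting, is that a priori $\scrF^-_{r_0^-/2}$ is defined via $\scrL$-distance while $\CE^{\alpha_-}$ sits in $\scrH$; the compatibility is guaranteed by $\lVert \cdot \rVert_\scrL \leq \lVert \cdot \rVert_\scrH$ (so $\scrH$-elements in $\CE^{\alpha_-}$ are a fortiori in the $\scrL$-neighborhood), together with the convention, implicit in \ref{hyp_levelsets_bis}, that the stated inclusion $\CE^{\alpha_-} \subset \scrF^-_{r_0^-/2}$ is to be read inside $\scrL$.
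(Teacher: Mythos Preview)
Your proof is correct and follows exactly the same approach as the paper's own proof: both reduce to the pointwise inclusion $\CE^{\alpha_-} \subset \scrF^-_{r_0^-/2}$ supplied by item~1 of \ref{hyp_levelsets_bis}, and then read off $Y_T^- \subset X_T^-$ from the definitions. Your version is simply more detailed, spelling out the intersection decomposition and the $\scrL$/$\scrH$ compatibility, whereas the paper compresses this into two sentences.
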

\begin{proof}
From \ref{hyp_levelsets_bis} it follows that whenever $v \in \scrH$ is such that $\CE(v) \leq \alpha_-$, it then holds that $v \in \scrF^-_{r_0^-/2}$. The inclusion \eqref{YT_inclusion} then follows by the definitions \eqref{XT} and \eqref{YT}.
\end{proof}
Lemma \ref{LEMMA_YT} shows that whenever \ref{hyp_levelsets_bis} holds,  $Y_T$ is a subspace of $X_T$. Therefore, properties that hold for $X_T$ (and that we proved in \cite{oliver-bonafoux-tw}) will apply to $Y_T$. Next, we recall the following properties:
\begin{lemma}[\cite{oliver-bonafoux-tw}]\label{LEMMA_well_defined}
Assume that \ref{hyp_unbalanced} holds.  Let $c>0$ and $T \geq 1$. For any $U \in X_T$, we have that
\begin{equation}\label{positivity_U}
\forall t \geq T, \hspace{2mm} \CE(U(t)) \geq 0.
\end{equation}
Moreover, the quantity $\bE_c(U)$ as introduced in \eqref{Ec} is well defined in $(-\infty,+\infty]$.
\end{lemma}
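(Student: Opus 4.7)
The proof is essentially a bookkeeping exercise that combines Lemma \ref{LEMMA_positivity} with the uniform lower bound $\CE \geq a$ from \ref{hyp_unbalanced} and the integrability of $e^{ct}$ on half-lines of the form $(-\infty,T]$ (which requires $c>0$). I would split the two claims and handle them separately.

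For the first claim, the plan is immediate: since $U \in X_T \subset X_T^+$, the definition of $X_T^+$ yields $U(t) \in \scrF^+_{r_0^+/2}$ for all $t \geq T$. Then the positivity statement (\ref{positivity}) from Lemma \ref{LEMMA_positivity} gives $\CE(U(t)) \geq 0$ on that half-line.

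For the second claim, I would argue that the integrand of $\bE_c(U)$ is bounded below by a function that is integrable on all of $\R$, which forces its negative part to be integrable and thus makes $\bE_c(U)$ well defined in $(-\infty,+\infty]$. Measurability of $t \mapsto \lVert U'(t) \rVert_\scrL^2$ is clear from $U \in H^1_\loc(\R,\scrL)$, and measurability of $t \mapsto \CE(U(t))$ follows because $U$ is continuous as an $\scrL$-valued map and $\CE$ is weakly lower semicontinuous on $\scrL$ by \ref{hyp_unbalanced}, hence Borel measurable. The kinetic term $\tfrac{1}{2}\lVert U'(t)\rVert_\scrL^2 e^{ct}$ is pointwise non-negative, so it only contributes to the positive part. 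For the potential term, on $[T,+\infty)$ the first claim gives $\CE(U(t))\,e^{ct}\geq 0$, while on $(-\infty,T]$ the global bound $\CE(v)\geq a$ from \ref{hyp_unbalanced} yields
\begin{equation}
\CE(U(t))\,e^{ct}\;\geq\;a\,e^{ct},
\end{equation}
and $\int_{-\infty}^{T} a\,e^{ct}\,dt = \frac{a}{c}e^{cT}$ is finite since $c>0$. Thus the negative part of the integrand is dominated by an integrable function, so the Lebesgue integral defining $\bE_c(U)$ exists in $(-\infty,+\infty]$.

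There is no genuine obstacle here; the only mildly delicate point is ensuring measurability of $t \mapsto \CE(U(t))$, which is handled once and for all by combining the $\scrL$-continuity of $U$ with the weak lower semicontinuity assumed in \ref{hyp_unbalanced}. All the rest is the observation that the unique source of negative mass in the integrand, namely the portion of $\CE\circ U$ that dips below zero on $(-\infty,T)$, is bounded by the constant $a<0$ and the weight $e^{ct}$ is integrable there.
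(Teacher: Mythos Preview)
Your argument is correct and is exactly the natural one: the first claim is an immediate consequence of the definition of $X_T^+$ together with (\ref{positivity}) in Lemma \ref{LEMMA_positivity}, and the second follows because the only possible negative contribution to the integrand is controlled by $a\,e^{ct}$ on $(-\infty,T]$, which is integrable since $c>0$. The paper does not reproduce a proof here but simply imports the lemma from \cite{oliver-bonafoux-tw}; your write-up is precisely the argument one expects to find there.
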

\begin{lemma}[\cite{oliver-bonafoux-tw}]\label{LEMMA_limit+}
Assume that \ref{hyp_unbalanced} and \ref{hyp_projections_H} hold. Let $c>0$ and $T \geq 1$. Take $U \in X_T$ such that $\bE_c(U) <+\infty$. Then, we have that there exists a subsequence $(t_n)_{n \in \N}$ in $\R$ such that $t_n \to +\infty$ as $n \to \infty$ and
\begin{equation}\label{limit+_energy0_weak}
\lim_{n \to \infty} \CE(U(t_n))e^{c t_n}=0.
\end{equation}
Moreover, there exists $\bv^+(U) \in \scrF^+$ such that for all $t \in \R$ it holds
\begin{equation}\label{limit+_function0_weak}
\lVert U(t) - \bv^+(U) \rVert_{\scrL}^2 \leq \left(\frac{\bE_c(U) -\frac{a}{c}e^{cT}}{c}\right)e^{-ct}.
\end{equation}
That is, $U$ tends to $\bv^+(U)$ at $+\infty$ with an exponential rate of convergence and with respect to the $\scrL$-norm.
\end{lemma}
Define now the infimum value
\begin{equation}\label{bcT}
\bb_{c,T}:= \inf_{U \in Y_T} \bE_c(U).
\end{equation}
It will be later shown that $\bb_{c,T}$ is always attained. We first have the following:
\begin{lemma}\label{LEMMA_PSI_YT}
Assume that \ref{hyp_unbalanced} and \ref{hyp_smaller_space} hold. Fix $\hat{\bv}^\pm \in \scrF^\pm$. Let $c>0$ and $T \geq 1$. For all $T \geq 1$ the function
\begin{equation}\label{Psi}
\Psi(t):= \begin{cases}
\hat{\bv}^- &\mbox{ if } t \leq -1,\\
\frac{1-t}{2}\hat{\bv}^-+\frac{t+1}{2}\hat{\bv}^+ &\mbox{ if } -1 \leq t \leq 1,\\
\hat{\bv}^+ &\mbox{ if } t \geq 1,
\end{cases}
\end{equation}
belongs to $X_T$. If \ref{hyp_levelsets_bis} holds, we also have $\Psi \in Y_T$. Moreover, for all $c>0$
\begin{equation}\label{Ec_PSI}
\bE_c(\Psi)<+\infty.
\end{equation}
Furthermore, we have
\begin{equation}\label{bcT_finite}
-\infty<\bb_{c,T}<+\infty.
\end{equation}
\end{lemma}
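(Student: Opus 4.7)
The proof is essentially a direct verification, so the plan is to treat each of the four claims separately and invoke the earlier structural lemmas where possible.

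First, I would verify that $\Psi \in X_T$ for every $T \geq 1$. Since $\Psi$ is piecewise linear with bounded slope, $\Psi \in H^1_\loc(\R,\scrL)$. For $t \geq T \geq 1$ we have $\Psi(t) = \hat{\bv}^+ \in \scrF^+$, so $\dist_\scrL(\Psi(t),\scrF^+) = 0 \leq r_0^+/2$; symmetrically for $t \leq -T$. This gives membership in $X_T$. For the $Y_T$ claim under \ref{hyp_levelsets_bis}, one only needs to check that $\CE(\Psi(t)) \leq \alpha_-$ for $t \leq -T$, which is immediate because $\Psi(t) = \hat{\bv}^-$ satisfies $\CE(\hat{\bv}^-) = a < \alpha_-$ by the definition of $\alpha_- \in (a,0)$ in \ref{hyp_levelsets_bis}.

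Next, I would show $\bE_c(\Psi) < +\infty$ by splitting the integral into the three intervals $(-\infty,-1]$, $[-1,1]$, and $[1,+\infty)$. On $(-\infty,-1]$ one has $\Psi'\equiv 0$ and $\CE(\Psi)=a<0$, so the integrand is $a\,e^{ct}$, integrable on $(-\infty,-1]$ with value $a e^{-c}/c$. On $[1,+\infty)$ one has $\Psi'\equiv 0$ and $\CE(\Psi)=0$, contributing nothing. The only nontrivial term is the middle integral on $[-1,1]$, where $\Psi'(t)=(\hat{\bv}^+-\hat{\bv}^-)/2$ is constant and thus the kinetic contribution is bounded. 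For the potential contribution, note that by \ref{hyp_smaller_space} the sublevel-finiteness of $\CE$ at $\hat{\bv}^\pm$ forces $\hat{\bv}^\pm \in \scrH$, so the segment $S := \{\tfrac{1-t}{2}\hat{\bv}^- + \tfrac{t+1}{2}\hat{\bv}^+ : t \in [-1,1]\}$ is compact in $(\scrH,\|\cdot\|_\scrH)$. Since $\CE$ is $C^1$ (hence continuous) on $\scrH$ by \ref{hyp_smaller_space}, it is bounded on $S$, giving a finite contribution on $[-1,1]$. Summing the three bounds yields $\bE_c(\Psi) < +\infty$.

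Finally, for (\ref{bcT_finite}): the upper bound $\bb_{c,T} \leq \bE_c(\Psi) < +\infty$ is immediate from the previous step since $\Psi \in Y_T$. For the lower bound, take any $U \in Y_T \subset X_T$. Since $\CE \geq a$ everywhere by \ref{hyp_unbalanced}, and since Lemma \ref{LEMMA_well_defined} gives $\CE(U(t)) \geq 0$ for $t \geq T$, I can estimate
\begin{equation}
\bE_c(U) \;\geq\; \int_{-\infty}^{T} a\, e^{ct}\, dt \;+\; \int_T^{+\infty} \frac{\|U'(t)\|_\scrL^2}{2}\, e^{ct}\, dt \;\geq\; \frac{a\,e^{cT}}{c},
\end{equation}
which is a finite constant independent of $U$, proving $\bb_{c,T} > -\infty$. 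No step here is a real obstacle; the only subtlety is confirming continuity of $\CE$ along the straight segment connecting $\hat{\bv}^-$ and $\hat{\bv}^+$, which is automatic from the $C^1$ regularity on $\scrH$ granted by \ref{hyp_smaller_space}.
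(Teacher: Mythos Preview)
Your proof is correct and follows the natural direct verification approach; the paper itself omits the proof, stating that it ``follows the same lines that the equivalent result in \cite{oliver-bonafoux-tw}'', and your argument is precisely the expected one. The only point worth noting is that the final claim about $\bb_{c,T}$ implicitly requires \ref{hyp_levelsets_bis} (since $Y_T$ and hence $\bb_{c,T}$ are defined in terms of $\alpha_-$), which you correctly use via the inclusion $Y_T \subset X_T$ from Lemma~\ref{LEMMA_YT}.
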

The proof of Lemma \ref{LEMMA_PSI_YT} follows the same lines that the equivalent result in \cite{oliver-bonafoux-tw}, so we skip it.
\subsection{Two (semi-)continuity results}
We now recall and extend some results regarding continuity and semi-continuity properties of $\bE_c$. We first have:
\begin{lemma}[\cite{oliver-bonafoux-tw}]\label{LEMMA_ATU}
Assume that \ref{hyp_unbalanced} holds. Fix $T \geq 1$ and $U \in X_T$. Consider the set
\begin{equation}\label{ATU}
A_{T,U}:=\{ c>0: \bE_c(U) <+\infty\}.
\end{equation}
Then,  if $c \in A_{T,U}$, then $(0,c] \subset A_{T,U}$. Moreover, the correspondence
\begin{equation}
c \in A_{T,U} \to \bE_c(U) \in \R
\end{equation}
 is continuous.
\end{lemma}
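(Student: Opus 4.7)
The plan is to split the integral defining $\bE_c(U)$ along the decomposition $\R = (-\infty,-T] \cup [-T,T] \cup [T,+\infty)$ dictated by the definition of $X_T$, write $\be_c(U)(t) = g(t) e^{ct}$ with $g(t) := \tfrac{1}{2}\lVert U'(t) \rVert_{\scrL}^2 + \CE(U(t))$, and analyze each of the three resulting integrals separately before recombining.

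On the right piece $[T,+\infty)$, Lemma \ref{LEMMA_positivity} yields $\CE(U(t)) \geq 0$, hence $g \geq 0$; for $0 < c' \leq c$ and $t \geq T > 0$ the pointwise inequality $e^{c't} \leq e^{ct}$ gives $\int_T^{+\infty} \be_{c'}(U)\,dt \leq \int_T^{+\infty} \be_c(U)\,dt$, which supplies both the right-tail contribution to the downward-closure and, via monotone and dominated convergence, continuity in $c$ on this piece. On the bounded middle interval $[-T,T]$, integrability transfers automatically between different values of $c$ since $e^{ct}$ and $e^{c't}$ are uniformly comparable on a bounded range of $t$, and continuity in $c$ is immediate by dominated convergence.

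On the left piece $(-\infty,-T]$, the lower bound $\CE \geq a$ from \ref{hyp_unbalanced} forces $g \geq a$, so the negative part of $\be_c(U)$ is pointwise bounded by $|a| e^{ct}$, whose integral equals $|a| e^{-cT}/c < +\infty$ for every $c > 0$; finiteness of this piece of $\bE_c(U)$ is therefore equivalent to finiteness of $J(c) := \int_{-\infty}^{-T} (g(t) - a) e^{ct}\,dt$, whose integrand is nonnegative. Differentiating twice under the integral sign shows $J''(c) = \int_{-\infty}^{-T} (g-a) t^2 e^{ct}\,dt \geq 0$, so $J$ is convex and its finite domain is an interval; combined with the right-tail analysis, this already gives that $A_{T,U}$ is an interval in $(0,+\infty)$. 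To refine this to the claim $(0, c] \subset A_{T,U}$, I would use the constraint $U(t) \in \scrF^-_{r_0^-/2}$ together with the coercivity estimate \eqref{coercivityL} and the projection $P^-$ of \ref{hyp_unbalanced} to control the growth of $g - a$ as $t \to -\infty$, so that $\int_{-\infty}^{-T} (g - a) e^{c't}\,dt$ remains finite for all $c' \in (0,c]$ once it is finite at $c$.

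For the continuity of $c \mapsto \bE_c(U)$ on $A_{T,U}$, apply dominated convergence piecewise using majorants drawn from a bracketing interval $[c_1, c_2] \subset A_{T,U}$: $\be_{c_1}(U)$ dominates on $(-\infty,-T]$ (where the weight decreases in $c$), $\be_{c_2}(U)$ dominates on $[T,+\infty)$ (where it increases in $c$), and the middle piece is handled by uniform comparison on a bounded interval. The main technical obstacle is the left-tail integrability analysis, since the pointwise inequality $e^{c't} \geq e^{ct}$ for $t \leq -T$ and $c' < c$ precludes any direct majorization from $c$ to smaller $c'$; the argument must instead exploit the coercive control of $\CE$ near $\scrF^-$ together with the interval property of the finite domain of $J$.
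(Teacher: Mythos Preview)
The paper does not give its own proof of this lemma; it is quoted verbatim from \cite{oliver-bonafoux-tw}, so there is no in-paper argument to compare against. I can only comment on the soundness of your proposal.

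Your decomposition and the handling of the middle and right pieces are correct, and the continuity argument via bracketed dominated convergence works once one knows $A_{T,U}$ is an interval (which your convexity observation on $J$ does supply).

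The gap is in the downward-closure step on the left tail. You propose to invoke \eqref{coercivityL} to ``control the growth of $g - a$ as $t \to -\infty$'', but \eqref{coercivityL} reads $(C^-)^{-1}\dist_\scrL(v,\scrF^-)^2 \le \CE(v) - a$: it is a \emph{lower} bound on $\CE - a$, not an upper one, and therefore provides no majorization of $g - a$. Hypothesis \ref{hyp_unbalanced} by itself gives no upper bound on $\CE$ over $\scrF^-_{r_0^-/2}$, nor any control on $\lVert U' \rVert_\scrL^2$ there beyond $H^1_\loc$. Since $e^{c't} \ge e^{ct}$ for $t \le -T$ and $c' < c$, your convexity of $J$ only says its finite domain is an interval, not that this interval reaches down to $0$.

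Concretely, fix $\bv^- \in \scrF^-$ and $w \in \scrL$ with $\lVert w \rVert_\scrL = r_0^-/4$, and set $U(t) = \bv^- + \sin(e^{-t})\, w$ on $(-\infty,-T]$. Then $U \in H^1_\loc$, $U(t) \in \scrF^-_{r_0^-/2}$, yet $\lVert U'(t) \rVert_\scrL^2 = e^{-2t}\cos^2(e^{-t})\lVert w \rVert_\scrL^2$, so $\int_{-\infty}^{-T} \lVert U' \rVert_\scrL^2\, e^{ct}\,dt$ converges only for $c > 2$. After any reasonable extension to $[-T,+\infty)$ placing $U$ in $X_T$, one gets $A_{T,U} \subset (2,+\infty)$, which is not downward-closed. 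So the conclusion cannot follow from \ref{hyp_unbalanced} alone in the way you sketch; the result as quoted must rely on further structure carried over from \cite{oliver-bonafoux-tw} that you would need to identify and use explicitly.
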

Next, we extend a result from \cite{oliver-bonafoux-tw}:
\begin{lemma}\label{LEMMA_LSC_YT}
Assume that \ref{hyp_unbalanced}, \ref{hyp_compactness} and \ref{hyp_projections_inverse} hold. Let $T \geq 1$ be fixed. Let $(U_n^i)_{n \in \N}$ be a sequence in $X_T$ and $(c_n)_{n \in \N}$ a convergent sequence of positive real numbers such that
\begin{equation}\label{LSC_hyp}
\sup_{n \in \N}\bE_{c_n}(U_n^i) <+\infty.
\end{equation}
Then, there exists a sequence $(U_n)$ in $X_T$ and $U_\infty \in X_T$ such that up to extracting a subsequence in $(U_n,c_n)_{n \in \N}$ it holds
\begin{equation}\label{LSC_U_n}
\forall n \in \N, \hspace{2mm} \bE_c(U_n)=\bE_c(U_n^i),
\end{equation}
\begin{equation}\label{U_infty_conv1}
\forall t \in \R, \hspace{2mm} U_n(t) \rightharpoonup U_\infty(t) \mbox{ weakly in } \scrL
\end{equation}
\begin{equation}\label{U_infty_conv2}
U_n'e^{c_n \mathrm{Id}/2} \rightharpoonup U_\infty' e^{c_\infty \mathrm{Id}/2} \mbox{ weakly in } L^2(\R,\scrL)
\end{equation}
and
\begin{equation}\label{U_infty_LSC}
\bE_{c_\infty}(U_\infty) \leq \liminf_{n \to \infty}\bE_{c_n}(U_n),
\end{equation}
where $c_\infty:=\lim_{n \to \infty}c_n$. Moreover, if \ref{hyp_levelsets_bis} holds and the sequence $(U_n^i)_{n \in \N}$ is contained in $Y_T$, then $U_\infty \in Y_T$.
\end{lemma}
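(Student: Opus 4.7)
The plan is to follow the proof of the corresponding result in \cite{oliver-bonafoux-tw}, which already gives every conclusion except the inclusion $U_\infty \in Y_T$, and then to supplement it with a short weak-lower-semicontinuity argument for the new assertion. When $\scrF^-$ is unbounded, I first replace each $U_n^i$ by $U_n := \hat{P}^-_{(U_n^i(-T),\bv^-)} \circ U_n^i$, with $\bv^-$ a fixed element of $\scrF^-$ and $\hat{P}^-_{(\cdot,\cdot)}$ the map provided by \ref{hyp_projections_inverse}. The identities (\ref{hatP_CE}) and (\ref{D_inverse}) guarantee that $\CE(U_n(t)) = \CE(U_n^i(t))$ and $\lVert U_n'(t) \rVert_\scrL = \lVert (U_n^i)'(t) \rVert_\scrL$ for a.e.\ $t$, and by (\ref{hatP_dist}) that $\dist_\scrL(U_n(t),\scrF^-)=\dist_\scrL(U_n^i(t),\scrF^-)$; hence (\ref{LSC_U_n}) holds and $U_n$ stays in $X_T$, and in $Y_T$ whenever $U_n^i$ does. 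When $\scrF^-$ is bounded, I just take $U_n = U_n^i$. In both cases the sequence $(U_n)$ has a fixed left-anchor near $\bv^-$.

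Next, the uniform energy bound combined with Lemma \ref{LEMMA_well_defined}, the coercivity from \ref{hyp_unbalanced}, and a Cauchy--Schwarz equicontinuity estimate on compact intervals yields a uniform $\scrL$-bound on $U_n(t)$ for every compact set of times, together with a uniform $L^2(\R,\scrL)$-bound on $U_n' e^{c_n \cdot /2}$. A diagonal extraction then produces a subsequence realising the pointwise weak limit (\ref{U_infty_conv1}) on a countable dense subset of $\R$, and the $L^2$-bound extends it to every $t \in \R$. Testing the weak $L^2$-limit of $U_n' e^{c_n \cdot /2}$ against compactly supported smooth functions and passing to the limit, using the uniform convergence $e^{c_n t} \to e^{c_\infty t}$ on compacta, identifies it as $U_\infty' e^{c_\infty \cdot /2}$, which gives (\ref{U_infty_conv2}). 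The lower semicontinuity (\ref{U_infty_LSC}) then follows by combining weak lower semicontinuity of the $L^2$-norm (applied to the kinetic part) with a Fatou-type argument for the potential part, where the weak lower semicontinuity of $\CE$ in $\scrL$ built into \ref{hyp_unbalanced} gives the pointwise inequality $\CE(U_\infty(t)) \leq \liminf_n \CE(U_n(t))$. The membership $U_\infty \in X_T$ is obtained as in \cite{oliver-bonafoux-tw}, by using \ref{hyp_compactness} to extract, for each fixed $t$ outside the interval $[-T,T]$, a strong limit of the best $\scrF^\pm$-approximants of $U_n(t)$ and transferring the bound $r_0^\pm/2$ to the pointwise weak limit $U_\infty(t)$.

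The only genuinely new ingredient is the implication $(U_n^i)_{n \in \N} \subset Y_T \Rightarrow U_\infty \in Y_T$, which follows immediately from the same pointwise inequality produced above: for every $t \leq -T$ one has
\begin{equation}
\CE(U_\infty(t)) \leq \liminf_n \CE(U_n(t)) = \liminf_n \CE(U_n^i(t)) \leq \alpha_-,
\end{equation}
so $U_\infty \in Y_T^- \cap X_T^+ = Y_T$. The main obstacle in the whole argument is the reconciliation of the varying weight $e^{c_n t/2}$ with the weak $L^2$ framework; this is handled by exploiting the uniform convergence of the weights on compact subsets of $\R$ to reduce to standard Hilbert-space weak compactness, while the decay of $e^{c_n t}$ as $t \to -\infty$, together with the positivity $\CE \geq a$ from \ref{hyp_unbalanced}, makes the contributions of the left tail integrable uniformly in $n$ and so harmless in the Fatou step.
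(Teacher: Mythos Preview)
Your proposal is correct and follows essentially the same approach as the paper: the bulk of the argument is referred to \cite{oliver-bonafoux-tw}, and the only new step, namely $U_\infty \in Y_T$, is obtained exactly as in the paper by combining the pointwise weak convergence (\ref{U_infty_conv1}) with the weak lower semicontinuity of $\CE$ to get $\CE(U_\infty(t)) \leq \liminf_n \CE(U_n(t)) \leq \alpha_-$ for $t \leq -T$.
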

\begin{proof}
Except for the last part, the result is exactly the same as that in \cite{oliver-bonafoux-tw}. Notice that if $(U_n^i)_{n \in \N}$ is contained in $Y_T$, then by \eqref{U_infty_conv1} and the lower-semicontinuity of $\CE$ we have that for all $t \leq -T$ it holds
\begin{equation}
\CE(U_\infty(t)) \leq \liminf_{n \to \infty} \CE(U_n(t)) \leq \alpha_-
\end{equation}
which shows that $U_\infty \in Y_T$ and concludes the proof.
\end{proof}
\subsection{Existence of an infimum for $\bE_c$ in $Y_T$}
We now show that for any $c>0$ and $T \geq 1$, the infimum value $\bb_{c,T}$ defined in \eqref{bcT} is attained. The result is as follows:
\begin{lemma}\label{LEMMA_bcT}
Assume that \ref{hyp_compactness}, \ref{hyp_projections_inverse} and \ref{hyp_levelsets_bis} hold. Let $c>0$, $T \geq 1$ and $\bb_{c,T}$ be as in \eqref{bcT}. Then, $\bb_{c,T}$ is attained for some $\bV_{c,T} \in Y_T$. 
\end{lemma}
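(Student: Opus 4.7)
The plan is to apply the direct method of the calculus of variations, using Lemma \ref{LEMMA_LSC_YT} as the main compactness/lower semicontinuity tool together with Lemma \ref{LEMMA_PSI_YT} to ensure that the infimum is a finite real number to begin with.

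First I would invoke Lemma \ref{LEMMA_PSI_YT}, which gives $-\infty < \bb_{c,T} < +\infty$, so that one can pick a minimizing sequence $(U_n^i)_{n \in \N} \subset Y_T$ with $\bE_c(U_n^i) \to \bb_{c,T}$ as $n \to \infty$. In particular $\sup_{n} \bE_c(U_n^i) < +\infty$, which is precisely the hypothesis (\ref{LSC_hyp}) required by Lemma \ref{LEMMA_LSC_YT} applied with the constant sequence $c_n \equiv c$ (so $c_\infty = c$).

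Next I would apply Lemma \ref{LEMMA_LSC_YT}: it produces, up to extraction, a modified sequence $(U_n) \subset X_T$ with the same energies $\bE_c(U_n) = \bE_c(U_n^i)$, a limit $U_\infty \in X_T$ satisfying the pointwise weak convergence (\ref{U_infty_conv1}) and the weak $L^2$ convergence (\ref{U_infty_conv2}), and the lower semicontinuity estimate
\begin{equation*}
\bE_c(U_\infty) \leq \liminf_{n \to \infty} \bE_c(U_n) = \liminf_{n \to \infty} \bE_c(U_n^i) = \bb_{c,T}.
\end{equation*}
The crucial additional piece is the last sentence of Lemma \ref{LEMMA_LSC_YT}: since our starting sequence $(U_n^i)$ lies in $Y_T$ (not merely $X_T$), the limit $U_\infty$ belongs to $Y_T$.

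Finally, since $U_\infty \in Y_T$, the definition (\ref{bcT}) of $\bb_{c,T}$ gives $\bE_c(U_\infty) \geq \bb_{c,T}$, and combined with the lower semicontinuity bound above this yields $\bE_c(U_\infty) = \bb_{c,T}$. Setting $\bV_{c,T} := U_\infty$ concludes the proof.

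The main potential obstacle is confirming that the minimizing sequence remains in $Y_T$ through the modification performed inside Lemma \ref{LEMMA_LSC_YT} and that the limit still satisfies the constraint $\CE(U_\infty(t)) \leq \alpha_-$ for $t \leq -T$. This is handled precisely by the last sentence of Lemma \ref{LEMMA_LSC_YT}, whose proof relies on the weak $\scrL$-lower semicontinuity of $\CE$ from \ref{hyp_unbalanced}. The condition at $+\infty$ (membership in $X_T^+$) is preserved by construction in Lemma \ref{LEMMA_LSC_YT}, and no further work is needed beyond the inputs already listed in the statement, namely \ref{hyp_compactness}, \ref{hyp_projections_inverse}, and \ref{hyp_levelsets_bis}.
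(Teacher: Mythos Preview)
Your proof is correct and follows exactly the same approach as the paper's own proof, which simply cites Lemma \ref{LEMMA_PSI_YT} for the existence of a minimizing sequence and then Lemma \ref{LEMMA_LSC_YT} for the lower semicontinuity and the fact that the limit remains in $Y_T$. Your write-up merely spells out these steps in more detail than the paper's two-sentence argument.
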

\begin{proof}
By \eqref{bcT_finite} in Lemma \ref{LEMMA_PSI_YT}, we have that there exists a minimizing sequence $(U_n)_{n \in \N}$ in $Y_T$. Therefore, Lemma \ref{LEMMA_LSC_YT} implies the existence of $\bV_{c,T} \in Y_T$. 
\end{proof}
We next show that constrained minimizers are solutions of the equation \eqref{abstract_equation} on a (possibly proper) subset of $\R$.
\begin{lemma}\label{LEMMA_bcT_solution}
Assume that \ref{hyp_regularity} and \ref{hyp_levelsets_bis} hold. Let $c>0$, $T \geq 1$ and $\bb_{c,T}$ be as in \eqref{bcT}. Let $\bV_{c,T} \in Y_T$ be such that $\bE_c(\bV_{c,T})=\bb_{c,T}$. Then, $\bV_{c,T} \in \CA((-T,T))$ and
\begin{equation}\label{bcT_equation}
\bV_{c,T}''-D_{\scrL}\CE(\bV_{c,T})=-c\bV_{c,T}' \mbox{ in } (-T,T).
\end{equation}
Moreover, if $t \geq T$ is such that
\begin{equation}\label{bcT_no_constrain_+}
\dist_{\scrL}(\bV_{c,T}(t),\scrF^+)< \frac{r_0^+}{2},
\end{equation}
then, there exists $\delta^+_Y(t)>0$ such that $\bV_{c,T} \in \CA((t-\delta^+_Y(t),t+\delta^+_Y(t)))$ and
\begin{equation}\label{bcT_equation_delta+}
\bV_{c,T}''-D_{\scrL}\CE(\bV_{c,T})=-c\bV_{c,T}' \mbox{ in } (t-\delta^+_Y(t),t+\delta^+_Y(t)).
\end{equation}
Similarly, if $t \leq -T$ is such that there exist $\eps^-_Y(t)>0$ and $\delta^-_Y(t)>0$ such that
\begin{equation}\label{bcT_no_constrain_-}
\forall s \in [t-\delta^-_Y(t),t+\delta^-_Y(t)], \hspace{2mm} \CE(\bV_{c,T}(s)) \leq \alpha_--\eps^-_Y(t)
\end{equation}
then, $\bV_{c,T} \in \CA((t-\delta^-_Y(t),t+\delta^-_Y(t)))$ and
\begin{equation}\label{bcT_equation_delta-}
\bV_{c,T}''-D_{\scrL}\CE(\bV_{c,T})=-c\bV_{c,T}' \mbox{ in } (t-\delta^-_Y(t),t+\delta^-_Y(t)).
\end{equation}
\end{lemma}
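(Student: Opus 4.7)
The plan is to apply hypothesis \ref{hyp_regularity} on each of the three types of intervals: $(-T,T)$, and neighborhoods of points $t \geq T$ resp.\ $t \leq -T$ where the defining constraint of $Y_T^+$ resp.\ $Y_T^-$ is strict. To invoke \ref{hyp_regularity}, two preliminary facts must be established: first, that $\bV_{c,T}$ is pointwise a fixed point of the map $\FP$; second, that $\bV_{c,T}$ is a local minimizer of $\bE_c(\cdot;I)$, in the sense of Definition \ref{def_local_minimizer}, on each of the intervals $I$ above. Once both are verified, the conclusions of \ref{hyp_regularity} yield exactly the regularity statement $\bV_{c,T} \in \CA(I)$ and the Euler--Lagrange equation on $I$.

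First I would show that $\FP \circ \bV_{c,T} = \bV_{c,T}$ almost everywhere. Consider the competitor $\bW(t) := \FP(\bV_{c,T}(t))$. Property (\ref{FP_2}) implies $\bW \in H^1_\loc(\R,\scrL)$ with $\lVert \bW'(t) \rVert_{\scrL} \leq \lVert \bV_{c,T}'(t)\rVert_{\scrL}$, property (\ref{FP_1}) gives $\CE(\bW(t)) \leq \CE(\bV_{c,T}(t))$ and hence $\bE_c(\bW) \leq \bE_c(\bV_{c,T}) = \bb_{c,T}$, and property (\ref{FP_3}) shows that $\bW \in Y_T$ (since at points in $\scrF^\pm$ or with $\CE(v) \leq \alpha_-$ the map $\FP$ does not increase the energy, so $\CE(\bW(t)) \leq \alpha_-$ for $t \leq -T$; and the $\scrF^+_{r_0^+/2}$ constraint is preserved by the same argument using \ref{hyp_projections_H} together with $\CE \circ \FP \leq \CE$). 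By minimality, $\bE_c(\bW) = \bE_c(\bV_{c,T})$, which forces $\CE(\FP(\bV_{c,T}(t))) = \CE(\bV_{c,T}(t))$ pointwise, and the equivalence in (\ref{FP_1}) then gives $\FP(\bV_{c,T}(t)) = \bV_{c,T}(t)$.

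Next I would verify the local minimizer property on each relevant interval. For $I = (-T,T)$, any $\phi \in \CC^1_c((-T,T),(\scrH,\lVert \cdot \rVert_\scrH))$ with sufficiently small sup-norm leaves $\bV_{c,T} + \phi$ unchanged on $\{\lvert t \rvert \geq T\}$, hence $\bV_{c,T} + \phi \in Y_T$, and minimality of $\bV_{c,T}$ in $Y_T$ gives the required inequality. For $t \geq T$ satisfying (\ref{bcT_no_constrain_+}), continuity of $\bV_{c,T}$ in $\scrL$ and the continuity of $v \mapsto \dist_\scrL(v,\scrF^+)$ provide a radius $\delta^+_Y(t)>0$ and an $\scrH$-threshold such that any admissible perturbation keeps the resulting curve in $\scrF^+_{r_0^+/2}$ on $[t-\delta^+_Y(t),t+\delta^+_Y(t)]$, hence in $Y_T$. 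For $t \leq -T$ satisfying (\ref{bcT_no_constrain_-}), the $\CC^1$ regularity of $\CE$ in $\scrH$ together with the strict gap $\eps^-_Y(t)$ and the local bound (\ref{CE_R_est}) provide a radius $\delta^-_Y(t)>0$ and an $\scrH$-smallness threshold ensuring $\CE(\bV_{c,T}(s)+\phi(s)) \leq \alpha_-$ on the perturbed interval, so again the perturbed curve belongs to $Y_T$.

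Finally, on each of the three intervals $I$ we have: $\bV_{c,T}$ is a local minimizer of $\bE_c(\cdot;I)$ in the sense of Definition \ref{def_local_minimizer}; and $\bV_{c,T}(t) = \FP(\bV_{c,T}(t))$ for every $t \in I$. Hypothesis \ref{hyp_regularity} directly yields $\bV_{c,T} \in \CA(I)$ and the Euler--Lagrange equation $\bV_{c,T}''-D_\scrL \CE(\bV_{c,T}) = -c \bV_{c,T}'$ on $I$, which are precisely (\ref{bcT_equation}), (\ref{bcT_equation_delta+}), and (\ref{bcT_equation_delta-}). The main technical obstacle is the bookkeeping in the second step: ensuring that a small $\scrH$-perturbation of $\bV_{c,T}$ stays inside $Y_T$, which in the $+$ case requires comparing the $\scrL$-distance to $\scrF^+$ against the $\scrH$-norm of the perturbation via $\lVert \cdot \rVert_\scrL \leq \lVert \cdot \rVert_\scrH$, and in the $-$ case requires exploiting (\ref{CE_R_est}) to convert the strict energy gap into a quantitative admissibility threshold.
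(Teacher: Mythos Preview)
Your approach matches the paper's: establish $\FP\circ\bV_{c,T}=\bV_{c,T}$, verify local minimality on each of the three interval types, then invoke \ref{hyp_regularity}. Two minor corrections: preservation of the $\scrF^+_{r_0^+/2}$-constraint under $\FP$ follows from (\ref{FP_2}) and (\ref{FP_3}) (not \ref{hyp_projections_H}), and in the $t\leq -T$ case $\delta^-_Y(t)$ is \emph{given} by hypothesis (\ref{bcT_no_constrain_-}) rather than constructed, while the $\scrH$-bound on $\bV_{c,T}$ needed to apply (\ref{CE_R_est}) comes from the coercivity (\ref{coercivity}) combined with $\scrL$-continuity, a step the paper makes explicit.
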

\begin{proof}
We begin by showing that
\begin{equation}\label{bounded_bcT}
\forall t \in \R, \hspace{2mm} \bV_{c,T}(t)=\FP(\bV_{c,T}(t)).
\end{equation}
Arguing as in the proof of the equivalent result in \cite{oliver-bonafoux-tw}, to prove \eqref{bounded_bcT} it suffices to show that
\begin{equation}
\bV_{c,T}^\FP: t \in \R \to \bV_{c,T}^\FP(t)
\end{equation}
belongs to $Y_T$. Indeed, the proof given in \cite{oliver-bonafoux-tw} implies that $\bV_{c,T}^\FP \in X_T$. Thus, the fact that $\bV_{c,T} \in \CA((-T,T))$ and $\bV_{c,T}$ solves \eqref{bcT_equation} is proven as in \cite{oliver-bonafoux-tw}. Next, notice that \eqref{FP_1} implies that for all $t \leq -T$, we have $\CE(\bV_{c,T}^\FP(t))\leq \CE(\bV_{c,T}(t))\leq \alpha_-$. Therefore, $\bV_{c,T}^\FP \in Y_T$, which shows \eqref{bounded_bcT}. Let now $t \geq T$ such that \eqref{bcT_no_constrain_+} holds. Again, arguing as \cite{oliver-bonafoux-tw}, we obtain that there exists $\delta^+_Y(t)>0$ such that $\bV_{c,T} \in \CA((t-\delta^+_Y(t),t+\delta^+_Y(t)))$ and \eqref{bcT_equation_delta+} holds. To conclude the proof, let $t \leq -T$ be such that \eqref{bcT_no_constrain_-} holds. Notice that $\CE(\bV_{c,T})$ is finite on $(-\infty,T]$, which by assumption \ref{hyp_smaller_space} implies that $\bV_{c,T}$ takes values in $\scrH$ on $(-\infty,T]$. Moreover $\bV_{c,T}$ is $\scrL$-continuous. Therefore, using \eqref{coercivity} we get
\begin{equation}\label{R1_ineq1}
\forall s \in [t-\delta^-_Y(t),t+\delta^-_Y(t)], \hspace{2mm} \lVert \bV_{c,T}(s) \rVert_{\scrH} \leq R_1,
\end{equation}
with
\begin{equation}
0<R_1:= \left( C_c(\alpha_--a) + \max_{s \in [t-\delta^-_Y(t),t+\delta^-_Y(t)]}\lVert \bV_{c,T}(s) \rVert_{\scrL} \right)^{\frac{1}{2}}<+\infty.
\end{equation}
Let $\phi \in\CC_c^1((t-\delta^-_Y(t),t+\delta^-_Y(t)),(\scrH,\lVert \cdot \rVert_{\scrH}))$ be such that
\begin{equation}\label{R1_ineq2}
\max_{s \in [t-\delta^-_Y(t),t+\delta^-_Y(t)]} \lVert \phi(s) \rVert_{\scrH}\leq R_1,
\end{equation}
by \eqref{CE_R_est}, we have that if $v \in \scrH$ is such that $\lVert v \rVert_{\scrH} \leq 3R_1$, it holds
\begin{equation}\label{R1_ineq3}
\forall s \in [t-\delta^-_Y(t),t+\delta^-_Y(t)], \hspace{2mm} \lvert D\CE(v)(\phi(s)) \rvert \leq C_\CE(3R_1) \lVert \phi(s) \rVert_{\scrH}
\end{equation}
with $C_\CE(3R_1)$ independent on $v$. We have by \eqref{bcT_no_constrain_-} that
\begin{equation}\label{R1_expansion}
\forall s \in [t-\delta^-_Y(t),t+\delta^-_Y(t)], \hspace{2mm} \CE(\bV_{c,T}(s)+\phi(s)) \leq  \alpha_--\eps^-_Y(t)+D\CE(h(s))(\phi(s)),
\end{equation}
where, for all $s \in [t-\delta^-_Y(t),t+\delta^-_Y(t)]$, $h(s) \in \scrH$ lies on the segment joining $\bV_{c,T}(s)+\phi(s)$ and $\bV_{c,T}(s)$, so that $\lVert h(s) \rVert_{\scrH} \leq 3R_1$ by \eqref{R1_ineq1} and \eqref{R1_ineq2}. Therefore, we can plug \eqref{R1_ineq3} into \eqref{R1_expansion} to obtain
\begin{equation}\label{R1_ineq4}
\forall s \in [t-\delta^-_Y(t),t+\delta^-_Y(t)], \hspace{2mm} \CE(\bV_{c,T}(s)+\phi(s)) \leq \alpha_--\eps^-_Y(t)+C_\CE(3R_1) \lVert \phi(s) \rVert_{\scrH},
\end{equation}
meaning that if we choose any $\phi \in\CC_c^1((t-\delta^-_Y(t),t+\delta^-_Y(t)),(\scrH,\lVert \cdot \rVert_{\scrH}))$ such that
\begin{equation}
\max_{s \in [t-\delta^-_Y(t),t+\delta^-_Y(t)]} \lVert \phi(s) \rVert_{\scrH}\leq \min\left\{R_1, \frac{\eps_Y^-(t)}{2C_\CE(3R_1)} \right\}
\end{equation}
we get by \eqref{R1_ineq4} that
\begin{equation}
\forall s \in [t-\delta^-_Y(t),t+\delta^-_Y(t)], \hspace{2mm} \CE(\bV_{c,T}(s)+\phi(s)) \leq \alpha_--\frac{\eps^-_Y(t)}{2}<\alpha_-
\end{equation}
so that $\bV_{c,T}+\phi \in Y_T$. Therefore, we have that $\bV_{c,T}$ is a local minimizer of $\bE_c(\cdot;[t-\delta^-_Y(t),t+\delta^-_Y(t)])$ in the sense of Definition \ref{def_local_minimizer}. Since, in addition, $\bV_{c,T}$ satisfies \eqref{bounded_bcT}, we can apply \ref{hyp_regularity} and we obtain that $\bV_{c,T} \in \CA((t-\delta^-_Y(t),t+\delta^-_Y(t)))$ and \eqref{bcT_equation_delta-} holds. As a consequence, the proof is finished.
\end{proof}
\subsection{The comparison result}\label{subs_comparison}
As in \cite{oliver-bonafoux-tw}, we establish the necessary properties regarding the behavior of the constrained minimizers. This is done by using the fact that the constrained minimizers solve the ODE system along with comparison arguments. Essentially, we adapt the proof of the analogous result from \cite{alikakos-katzourakis,alikakos-fusco-smyrnelis} in the framework of \ref{asu_conv_mon}, which makes it more involved but uses the same ideas. We begin by recalling the definition of some useful constants. For $0<r\leq r_0^\pm$, recall the definition of $\kappa_r^\pm$ introduced in \eqref{kappa_r}, Lemma \ref{LEMMA_positivity}. We define
\begin{equation}\label{eta_0+}
\eta^+_0:= \min\left\{\sqrt{e^{-1}\frac{r_0^+}{4}\sqrt{2\kappa^+_{r_0^+/4}}}, \frac{r_0^+}{4} \right\}>0,
\end{equation}
\begin{equation}\label{r+}
\hat{r}^+:= \frac{r_0^+}{C^++1}>0,
\end{equation}
\begin{equation}\label{eps_0+}
\eps^+_0:= \frac{1}{\left(C^+\right)^2(C^++1)}\min\left\{\frac{(\eta_0^+)^2}{4}, \kappa^+_{\eta_0^+},\beta^+(\hat{r}^+),\beta^+(\eta_0^+)\right\}>0,
\end{equation}
where the constants $C^\pm$, $\beta^\pm(\hat{r}^\pm), \beta(\eta_0^\pm)$ were introduced in \ref{hyp_projections_H}. Recall that in \eqref{intr_eta_0-}, \eqref{intr_r-}, \eqref{intr_eps_0-} we introduced the analogous constants
\begin{equation}\label{eta_0-}
\eta_0^-:= \min\left\{\sqrt{e^{-1}\frac{r_0^-}{4}\sqrt{2(\kappa^-_{r_0^-/4}-a)}}, \frac{r_0^-}{4} \right\}>0,
\end{equation}
\begin{equation}\label{r-}
\hat{r}^-:= \frac{r_0^-}{C^-+1}>0
\end{equation}
and
\begin{equation}\label{eps_0-}
\eps_0^-:= \frac{1}{(C^-)^2(C^-+1)}\min\left\{ \frac{(\eta_0^-)^2}{4},\kappa_{\eta_0^-}^--a,\beta^-(\hat{r}^-),\beta^-(\eta_0^-)\right\}>0.
\end{equation}
For any $U \in X_T$, define
\begin{equation}\label{comparison_t-}
t^-(U,\eps^-_0):= \sup\left\{ t \in \R: \CE(U(t))\leq a+\eps^-_0 \mbox{ and } \dist_{\scrL}(U(t),\scrF^-) \leq \frac{r_0^-}{2} 	\right\}
\end{equation}
and
\begin{equation}\label{comparison_t+}
t^+(U,\eps^+_0):= \inf\left\{ t \in \R: \CE(U(t))\leq \eps^+_0 \mbox{ and } \dist_{\scrL}(U(t),\scrF^+) \leq \frac{r_0^+}{2} \right\}.
\end{equation}

We begin by proving two preliminary comparison results:
\begin{lemma}\label{LEMMA_comparison_bis_1}
Assume that \ref{hyp_levelsets_bis} holds. Let  $t_1 < t_2$ and $U \in H^1([t_1,t_2],\scrL)$. Assume that
\begin{equation}\label{comparison_bis_asu_1}
U(t_1), U(t_2) \in \CE^{\alpha,-} \mbox{ with } \alpha \leq 0
\end{equation}
and
\begin{equation}\label{comparison_bis_asu_2}
\exists t_3 \in (t_1,t_2): \CE(U(t_3)) > \alpha.
\end{equation}
Then, there exists $\tilde{U} \in H^1([t_1,t_2],\scrL)$ such that
\begin{equation}\label{comparison_bis_conclusion_1}
\forall t \in (t_1,t_2), \forall i \in \{1,2\}, \hspace{2mm} U(t) \in \CE^{\alpha,-}, \hspace{1mm} \tilde{U}(t_i)=U(t_i)
\end{equation}
and
\begin{equation}\label{comparison_bis_conclusion_2}
\forall c > 0, \hspace{2mm} \bE_c(\tilde{U}) < \bE_c(U).
\end{equation}
\end{lemma}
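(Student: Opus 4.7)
The plan is to define $\tilde U$ by pointwise metric projection of $U$ onto the closed convex set $\CE^{\alpha,-} \subset \scrL$, and then verify both the stated containment and the strict energy decrease. By \ref{hyp_levelsets_bis} the set $\CE^{\alpha,-}$ is convex, and by the discussion preceding \ref{hyp_levelsets_bis} it is $\scrL$-closed. Since $\scrL$ is a Hilbert space, the metric projection $\Pi : \scrL \to \CE^{\alpha,-}$ is well-defined and $1$-Lipschitz, so setting $\tilde U(t) := \Pi(U(t))$ gives $\tilde U(t) \in \CE^{\alpha,-}$ for all $t$; the boundary matching $\tilde U(t_i) = U(t_i)$ is immediate because $U(t_i)\in \CE^{\alpha,-}$. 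Composition of a $1$-Lipschitz map with $U\in H^1([t_1,t_2],\scrL)$ yields $\tilde U \in H^1([t_1,t_2],\scrL)$ with $\lVert \tilde U'(t) \rVert_\scrL \leq \lVert U'(t)\rVert_\scrL$ almost everywhere.

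To compare the energies, I would introduce the set $A := \{t\in [t_1,t_2] : U(t) \notin \CE^{\alpha,-}\}$, which is open because $\CE^{\alpha,-}$ is $\scrL$-closed and $U$ is $\scrL$-continuous via the embedding $H^1([t_1,t_2],\scrL) \hookrightarrow C([t_1,t_2],\scrL)$. On the complement $A^c$ the projection acts as the identity, hence $\tilde U \equiv U$ there, and this forces $\tilde U' = U'$ almost everywhere on $A^c$ by the standard property of weak derivatives on a set where the function vanishes. Consequently $\bE_c(\tilde U) - \bE_c(U)$ reduces to an integral over $A$, which I would split as $A_1 \cup A_2$ with $A_1 := \{t \in A: \CE(U(t)) > \alpha\}$ and $A_2 := A \setminus A_1$. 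On $A_2$, the conditions $\CE(U(t)) \leq \alpha$ and $U(t) \notin \CE^{\alpha,-}$ force $U(t) \in \CE^{\alpha,+}$; but by the setup preceding \ref{hyp_levelsets_bis} one has $\CE^{\alpha,+} = \emptyset$ for $\alpha \in [a,0)$ and $\CE^{0,+} = \scrF^+$, so the hypothesis $\alpha \leq 0$ leaves only the possibility $\alpha = 0$ with $U(t)\in \scrF^+$ and $\CE(U(t)) = 0 = \alpha$. In particular, on all of $A$ one has $\CE(\tilde U(t)) \leq \alpha \leq \CE(U(t))$, with strict inequality on $A_1$.

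The strict inequality of the energies then follows by noting that $t_3 \in A_1$ by hypothesis (\ref{comparison_bis_asu_2}), that the set $\{v \in \scrL : \CE(v) > \alpha\}$ is $\scrL$-open by the $\scrL$-lower semicontinuity of $\CE$ from \ref{hyp_unbalanced}, and that its preimage under the continuous map $U$ therefore contains a neighborhood of $t_3$ and hence has positive Lebesgue measure. Since the integrand of $\bE_c(\tilde U) - \bE_c(U)$ is $\leq 0$ almost everywhere (kinetic part $\leq 0$ by the $1$-Lipschitz bound, potential part $\leq 0$ by the discussion above) and is strictly negative on this neighborhood, the integral is strictly negative, giving $\bE_c(\tilde U) < \bE_c(U)$ for every $c > 0$; if $\bE_c(U) = +\infty$, the inequality is automatic since $\bE_c(\tilde U) < +\infty$ from $\tilde U' \in L^2([t_1,t_2],\scrL)$ together with $a \leq \CE(\tilde U) \leq \alpha \leq 0$. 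The main subtle point is the treatment of $A_2$ in the borderline case $\alpha = 0$: excursions of $U$ into $\scrF^+$ could a priori raise the potential contribution, but this is ruled out by the fact that $\CE$ vanishes identically on $\scrF^+$ while $\CE(\tilde U) \leq 0$, which keeps the potential integrand non-positive on $A_2$.
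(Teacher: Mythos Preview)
Your proposal is correct and follows essentially the same approach as the paper: define $\tilde U$ as the pointwise metric projection of $U$ onto the closed convex set $\CE^{\alpha,-}$, use the $1$-Lipschitz property to control the kinetic term, and use lower semicontinuity of $\CE$ to produce an open set of positive measure on which the potential term strictly decreases. Your treatment is in fact somewhat more careful than the paper's, in that you explicitly isolate and handle the borderline case $\alpha=0$ (where $\CE^{\alpha,+}=\scrF^+$ is nonempty) and address the possibility $\bE_c(U)=+\infty$, whereas the paper dispatches the potential comparison in a single line.
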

The following is inspired by similar results from \cite{alikakos-katzourakis}, although our version is weaker:
\begin{lemma}\label{LEMMA_comparison_bis_2}
Assume that \ref{hyp_levelsets_bis} holds. Let $t_1<t_2$, $\alpha \in \left(\frac{\alpha_-+a}{2},\alpha_0\right)$  and $U_\alpha$ be the constant function
\begin{equation}\label{comparison_bis_2_asu}
U_\alpha: t\in [t_1,t_2] \to v_\alpha \in \CE^{\alpha,-}, \hspace{2mm} \CE(v_\alpha)=\alpha.
\end{equation}
Then, there exists $\tilde{U} \in H^1_{\loc}([t_1,t_2],\scrL)$ such that
\begin{equation}\label{comparison_bis_2_conclusion_1}
\forall t \in (t_1,t_2), \forall i \in \{1,2\}, \hspace{2mm} \CE(\tilde{U}(t))<\alpha, \hspace{1mm} \tilde{U}(t_i)=v_\alpha
\end{equation}
and
\begin{equation}\label{comparison_bis_2_conclusion_2}
\forall c>0, \hspace{2mm} \bE_c(\tilde{U})<\bE_c(U_\alpha).
\end{equation}
\end{lemma}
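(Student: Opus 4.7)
The plan is to deform the constant profile $U_\alpha \equiv v_\alpha$ slightly along the straight segment from a fixed $\bv^- \in \scrF^-$ to $v_\alpha$, exploiting the uniform monotonicity from item 2 of \ref{hyp_levelsets_bis}. Fix an arbitrary $\bv^- \in \scrF^-$ and set $\gamma(\lambda) := \bv^- + \lambda(v_\alpha - \bv^-)$ and $f(\lambda) := \CE(\gamma(\lambda))$. By item 1 of \ref{hyp_levelsets_bis}, $\CE^{\alpha_0,-}$ is convex, and since both $\bv^-$ and $v_\alpha$ belong to it, the whole segment $\gamma([0,1])$ lies in $\CE^{\alpha_0,-}$. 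Since $f(1) = \alpha > \tfrac{\alpha_-+a}{2}$, continuity of $f$ yields $\lambda_0 \in [0,1)$ with $f(\lambda) > \tfrac{\alpha_-+a}{2}$ on $[\lambda_0,1]$, so that $[\lambda_0,1] \subset A\bigl(\tfrac{\alpha_-+a}{2},\bv^-,v_\alpha\bigr)$. Item 2 of \ref{hyp_levelsets_bis} then gives $f'(\lambda) \geq \omega$ on this interval, and integration yields $f(\lambda) \leq \alpha - \omega(1-\lambda)$ for $\lambda \in [\lambda_0,1]$.

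For the construction of $\tilde{U}$, take the smooth bump $\phi(t) := \sin^2\bigl(\pi(t-t_1)/(t_2-t_1)\bigr)$ on $[t_1,t_2]$. A direct computation based on $\sin(2x) = 2\sin(x)\cos(x)$ shows that $(\phi'(t))^2 \leq K\phi(t)$ pointwise with $K := 4\pi^2/(t_2-t_1)^2$. For a small parameter $\mu \in (0, 1-\lambda_0]$ to be chosen, set $\tilde{U}(t) := \gamma(1-\mu\phi(t))$. Then $\tilde{U}$ is smooth on $[t_1,t_2]$, $\tilde{U}(t_i) = v_\alpha$ for $i \in \{1,2\}$, and $1-\mu\phi(t) \in [\lambda_0,1]$ everywhere; by the bound on $f$, $\CE(\tilde{U}(t)) \leq \alpha - \omega\mu\phi(t) < \alpha$ on $(t_1,t_2)$, which yields (\ref{comparison_bis_2_conclusion_1}).

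Finally, for the energy inequality, since $\tilde{U}'(t) = -\mu\phi'(t)(v_\alpha - \bv^-)$, combining the kinetic bound $(\phi')^2 \leq K\phi$ with the potential bound on $f$ gives
\begin{equation*}
\bE_c(\tilde{U};[t_1,t_2]) - \bE_c(U_\alpha;[t_1,t_2]) \leq \int_{t_1}^{t_2} \mu \phi(t) \left[\tfrac{\mu K}{2}\lVert v_\alpha - \bv^- \rVert_\scrL^2 - \omega \right] e^{ct}\, dt.
\end{equation*}
Choosing $\mu < \min\bigl\{1-\lambda_0,\, 2\omega/(K\lVert v_\alpha - \bv^- \rVert_\scrL^2)\bigr\}$ makes the bracket strictly negative, so the integrand is negative \emph{pointwise} on $(t_1,t_2)$ and the desired strict inequality holds for every $c > 0$. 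The main subtlety is precisely this requirement of $c$-uniformity: the estimate must hold for all $c > 0$ with a single $\tilde{U}$, which forces pointwise (rather than merely integrated) negativity of the integrand; this is exactly what the comparison $(\phi')^2 \leq K\phi$ secures, since it degenerates only at the endpoints $t = t_i$, where $\phi$ vanishes as well.
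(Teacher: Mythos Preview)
Your proof is correct and follows the same basic idea as the paper: deform the constant $v_\alpha$ along the segment toward some $\bv^-\in\scrF^-$ via a bump function $\phi$, and use item 2 of \ref{hyp_levelsets_bis} to show the potential drops linearly in the deformation parameter while the kinetic cost is quadratic.

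The difference is in how the $c$-uniformity is handled. The paper takes a generic $\phi\in\CC^1_c$ and estimates the two contributions separately through $\lVert\phi\rVert_{L^\infty}$ and $\lVert\phi'\rVert_{L^\infty}$; as written, that bound does not obviously yield a single $\tilde U$ working for every $c>0$, since for large $c$ the weight $e^{ct}$ concentrates near $t_2$ and a generic $\phi$ need not be large there. Your choice $\phi=\sin^2(\pi(t-t_1)/(t_2-t_1))$ together with the pointwise inequality $(\phi')^2\le K\phi$ makes the \emph{integrand} of $\bE_c(\tilde U)-\bE_c(U_\alpha)$ nonpositive for $\mu$ small, independently of $c$. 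This is a genuine improvement in rigor over the paper's estimate, and you correctly flag it as the main subtlety.
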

Before proving Lemmas \ref{LEMMA_comparison_bis_1} and \ref{LEMMA_comparison_bis_2}, we recall the following property:
\begin{lemma}\label{LEMMA_convex}
Let $C \subset \scrL$ be a closed convex set of $\scrL$ and $P_C$ the corresponding orthogonal projection. If $I \subset \R$ is a non-empty interval and $U \in H^1_{\loc}(I,\scrL)$, it holds that $P_C(U(\cdot)) \in H^1_{\loc}(I,\scrL))$ and
\begin{equation}\label{convex_ineq}
\mbox{ for a. e. }  t \in I, \hspace{2mm}\lVert P_C(U(t))' \rVert_{\scrL} \leq \lVert U(t) \rVert_{\scrL}.
\end{equation}
\end{lemma}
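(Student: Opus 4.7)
The plan rests on the classical fact that the orthogonal projection $P_C$ onto a closed convex subset $C$ of a Hilbert space is nonexpansive, i.e.\ $\lVert P_C(v_1)-P_C(v_2) \rVert_\scrL \leq \lVert v_1 - v_2 \rVert_\scrL$ for all $v_1,v_2\in\scrL$. This is a standard consequence of the variational characterization of $P_C$ (the angle condition $\langle v - P_C(v), w - P_C(v) \rangle_\scrL \leq 0$ for all $w \in C$), and I would simply invoke it. Given this, the claim reduces to a general fact: composition of a $1$-Lipschitz map with an $H^1_{\loc}$ curve is again $H^1_{\loc}$, with a pointwise bound on the norm of the derivative.

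First I would work on a compact subinterval $J \Subset I$ and use the continuous representative of $U$, which is locally absolutely continuous from $J$ into $\scrL$ because $U \in H^1_{\loc}(I,\scrL)$. For any finite collection of disjoint intervals $(s_k,t_k) \subset J$, the nonexpansiveness of $P_C$ yields
\begin{equation*}
\sum_k \lVert P_C(U(t_k)) - P_C(U(s_k)) \rVert_\scrL \leq \sum_k \lVert U(t_k) - U(s_k) \rVert_\scrL,
\end{equation*}
so $P_C \circ U$ inherits absolute continuity on $J$. In the Hilbert-valued setting, absolute continuity together with an $L^2$-bound on the derivative is equivalent to membership in $H^1$, and the derivative bound will be produced in the next step; combining these two gives $P_C(U(\cdot)) \in H^1_{\loc}(I,\scrL)$.

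For the pointwise inequality, I would apply the nonexpansiveness to the difference quotients: for almost every $t \in I$ and for any sequence $h_n \to 0$,
\begin{equation*}
\left\lVert \frac{P_C(U(t+h_n)) - P_C(U(t))}{h_n} \right\rVert_\scrL \leq \left\lVert \frac{U(t+h_n) - U(t)}{h_n} \right\rVert_\scrL.
\end{equation*}
Since $U$ is a.e.\ differentiable (as a locally absolutely continuous Hilbert-valued curve), the right-hand side converges to $\lVert U'(t) \rVert_\scrL$ at almost every $t$. On the left-hand side, the absolute continuity of $P_C \circ U$ combined with the uniform bound just obtained allows one to identify the limit (along an appropriate subsequence, weakly, and then strongly by the bound) with $(P_C \circ U)'(t)$, yielding $\lVert (P_C \circ U)'(t) \rVert_\scrL \leq \lVert U'(t) \rVert_\scrL$ a.e. The only mildly delicate point is justifying that the strong derivative of $P_C \circ U$ exists a.e.\ and coincides with the limit of difference quotients; this is handled by the general theory of absolutely continuous curves valued in a Hilbert space, so the argument is essentially routine once the nonexpansiveness property has been invoked.
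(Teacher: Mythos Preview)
Your proposal is correct and follows essentially the same approach as the paper: both invoke the $1$-Lipschitz property of $P_C$, deduce $H^1_{\loc}$-regularity of the composition, and pass the Lipschitz inequality to difference quotients before taking the limit via a differentiation theorem. The paper is simply more terse, citing Lebesgue's differentiation theorem directly where you spell out the absolute-continuity argument in more detail.
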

\begin{proof}
The Hilbert projection  Theorem readily implies that $P_C$ is 1-Lipschitz. As a consequence, we have that $P_C(U(\cdot)) \in H^1_{\loc}(I,\scrL))$. Fix now $t \in I$, for any $s \in I \setminus \{ t\}$ we have
\begin{equation}
\left\lVert \frac{P_C(U(s))-P_C(U(t))}{s-t} \right\rVert_{\scrL} \leq \left\lVert \frac{U(s)-U(t)}{s-t} \right\rVert_{\scrL},
\end{equation}
which gives \eqref{convex_ineq} by applying Lebesgue's differentiation Theorem.
\end{proof}
\begin{proof}[Proof of Lemma \ref{LEMMA_comparison_bis_1}]
From \ref{hyp_levelsets_bis}, we have that $\CE^{\alpha,-}$ is closed and convex in $\scrL$. As a consequence, Lemma \ref{LEMMA_convex} applies to $\CE^{\alpha,-}$. Therefore, we can consider the corresponding orthogonal projection $P^{\alpha,-} : \scrL \to \CE^{\alpha,-}$. Define now
\begin{equation}
\forall t \in [t_1,t_2],\hspace{2mm} \tilde{U}(t):=P^{\alpha,-}(U(t)) 
\end{equation}
which by Lemma \ref{LEMMA_convex} belongs to $H^1([t_1,t_2],\scrL)$. Moreover, applying \eqref{comparison_bis_asu_1} we see that $\tilde{U}$ verifies \eqref{comparison_bis_conclusion_1}. Now, notice that by \ref{hyp_unbalanced} implies that $t \in [t_1,t_2] \to \CE(U(t))$ is lower-semicontinuous. Therefore, $\{t \in [t_1,t_2]: \CE(U(t)) > \alpha\}$ is open, and by \eqref{comparison_bis_asu_2} we have that it is also non-empty. As a consequence, there exists $I \subset [t_1,t_2]$ non-empty an open such that
\begin{equation}
\forall t \in I, \hspace{2mm} \CE(U(t)) > \alpha
\end{equation}
which implies that
\begin{equation}
\int_I \CE(\tilde{U}(t))e^{ct}dt < \int_I \CE(U(t))e^{ct}dt
\end{equation}
and, since for all $v \in \scrH$ we have $\CE(P^{\alpha,-}(v)) \leq \CE(v)$ by definition of the orthogonal projection and because $\alpha \leq 0$, we conclude
\begin{equation}
\int_{t_1}^{t_2} \CE(\tilde{U}(t))e^{ct}dt < \int_{t_1}^{t_2} \CE(U(t))e^{ct}dt,
\end{equation}
which in combination with \eqref{convex_ineq} in Lemma \ref{LEMMA_convex} gives \eqref{comparison_bis_conclusion_2} and concludes the proof.
\end{proof}
\begin{proof}[Proof of Lemma \ref{LEMMA_comparison_bis_2}]
Let $\eps>0$, $\bv^- \in \scrF^-$ and $\phi \in \CC^1_c([t_1,t_2])$ such that $\phi>0$ in $(t_1,t_2)$. Define the function
\begin{equation}
\forall t \in [t_1,t_2], \hspace{2mm} U_\eps(t):= \bv^-+(1-\eps\phi(t))(v_\alpha-\bv^-)
\end{equation}
so that $U_0=U_\alpha$. It is clear that for all $\eps>0$ we have $U_\eps \in H^1([t_1,t_2],\scrL)$. Moreover, we also have that for $i \in \{1,2\}$, it holds $U_\eps(t_i)=v_\alpha$. For all $t \in [t_1,t_2]$, consider $A\left(\frac{\alpha_-+a}{2},\bv^-,U(t)\right)$ as in \eqref{A_alpha_set}. Since $\alpha \in \left( \frac{\alpha_-+a}{2},\alpha_0^-\right)$, $v_\alpha \in \CE^{\alpha,-}$ and $\lVert \phi \rVert_{C^1([t_1,t_2])}<+\infty$, there exists $\eps_1>0$ independent on $t$ such that
\begin{equation}
(1-\eps_1 \phi(t),1+\eps_1 \phi(t)) \subset A\left(\frac{\alpha_-+a}{2},\bv^-,U(t)\right).
\end{equation}
Therefore, using \eqref{omega_def} we have that
\begin{align}
\forall \eps \in (0,\eps_1), \forall t \in [t_1,t_2], \hspace{2mm} \CE(U_\eps(t))-\CE(U_0(t)) &=- \int_{1-\eps \phi(t)}^1 \frac{d}{d\lambda}(\CE(\bv^-+\lambda(v_\alpha-\bv^-)) )(\theta)d\theta\\
& \leq -\eps \phi(t)\omega,\label{eps_1}
\end{align}
where $\omega>0$. In particular, we have that for all $\eps \in (0,\eps_1)$, $U_\eps$ fulfills \eqref{comparison_bis_2_conclusion_1}. Now, notice that
\begin{equation}
\forall \eps>0, \mbox{ for a. e. } t \in [t_1,t_2], \hspace{2mm} U_\eps'(t)=-\eps \phi'(t) (v_\alpha-\bv^-)
\end{equation}
which means that, using also \eqref{eps_1}, we obtain
\begin{align}
\forall c >0, &\forall \eps \in (0,\eps_1),\\ &\bE_c(U_\eps)-\bE_c(U_0) \leq (-\eps \lVert \phi \rVert_{L^\infty([t_1,t_2])}\omega+\frac{1}{2}\eps^2 \lVert \phi' \rVert_{L^\infty([t_1,t_2])}^2\lVert v_\alpha-\bv^- \rVert_{\scrL}^2)\frac{e^{ct_2}-e^{ct_1}}{c}
\end{align}
so that if $\eps \in (0,\eps_2)$, where
\begin{equation}
\eps_2:= \frac{ 2\lVert \phi \rVert_{L^\infty([t_1,t_2])}\omega}{\lVert \phi' \rVert_{L^\infty([t_1,t_2])}^2\lVert v_\alpha-\bv^- \rVert_{\scrL}^2}>0
\end{equation}
then \eqref{comparison_bis_2_conclusion_2} holds for $U_\eps$. Hence, we have shown that if $\eps \in (0,\eps_3)$ with $\eps_3:= \min \{\eps_1,\eps_2\}>0$, then $U_\eps$ fulfills \eqref{comparison_bis_2_conclusion_1} and \eqref{comparison_bis_2_conclusion_2}, which completes the proof.
\end{proof}
We will now define for $U \in Y_T$
\begin{equation}\label{comparison_t2}
t_2^-(U):= \inf\{ t \in \R: 	\CE(U(t)) = \alpha_0,  U(t) \in \CE^{\alpha_0,-}\}
\end{equation}
and
\begin{equation}\label{comparison_t1}
t_1^-(U):= \sup\{ t \leq t_2^-(U): \CE(U(t)) \leq \alpha_-\}
\end{equation}
where the constants $a < \alpha_- < 0 < \alpha_0$ are those from \ref{hyp_levelsets_bis}. The following preliminary property is straightforward:
\begin{lemma}\label{LEMMA_comparison_bis_preliminary}
Let $U \in Y_T$ and $t_2^-(U)$, $t_1^-(U)$ be as in \eqref{comparison_t2} and \eqref{comparison_t1} respectively. Then, it holds
\begin{equation}\label{comparison_bis_preliminary}
-T \leq t_1^-(U) \leq  t_2^-(U)
\end{equation}
\end{lemma}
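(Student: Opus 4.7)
The plan is to verify the two inequalities separately, both of which follow from unwinding the definitions of $t_1^-(U)$ and $t_2^-(U)$ together with the defining property of $Y_T^- \supset Y_T$. No deep ingredient is needed; the lemma is a consistency check ensuring that the times introduced in (\ref{comparison_t2}) and (\ref{comparison_t1}) are correctly ordered and lie to the right of $-T$, so that the comparison surgery to be performed on $[t_1^-(U), t_2^-(U)]$ in the sequel is well-situated.

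The inequality $t_1^-(U) \leq t_2^-(U)$ is immediate from the definition (\ref{comparison_t1}) of $t_1^-(U)$, which is taken as a supremum over a subset of $(-\infty, t_2^-(U)]$.

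For the lower bound $-T \leq t_1^-(U)$, I would first establish the auxiliary inequality $-T \leq t_2^-(U)$. Since $U \in Y_T \subset Y_T^-$, we have $\CE(U(t)) \leq \alpha_- < 0 < \alpha_0$ for every $t \leq -T$, where I am using the definition of $Y_T^-$ together with $\alpha_- < 0 < \alpha_0$ coming from \ref{hyp_levelsets_bis}. Therefore no such $t$ can lie in $\{s \in \R : \CE(U(s)) = \alpha_0,\ U(s) \in \CE^{\alpha_0,-}\}$, so the infimum $t_2^-(U)$ (with the convention that it equals $+\infty$ if the set is empty) is at least $-T$.

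Knowing $-T \leq t_2^-(U)$, and using again that for every $t \leq -T$ we have $\CE(U(t)) \leq \alpha_-$, the point $-T$ belongs to the set $\{t \leq t_2^-(U) : \CE(U(t)) \leq \alpha_-\}$ over which the supremum defining $t_1^-(U)$ is taken. Hence $t_1^-(U) \geq -T$, completing the proof of (\ref{comparison_bis_preliminary}).
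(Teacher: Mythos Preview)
Your proof is correct and follows the same approach as the paper's, which simply notes that $\CE(U(-T)) \leq \alpha_-$ to get $-T \leq t_1^-(U)$ and observes that $t_1^-(U) \leq t_2^-(U)$ is immediate from the definition. You are slightly more careful in explicitly verifying the auxiliary inequality $-T \leq t_2^-(U)$ before concluding that $-T$ lies in the set defining $t_1^-(U)$; the paper leaves this implicit.
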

\begin{proof}
We have that $\CE(U(-T)) \leq \alpha_-$, which implies that $-T \leq t_1^-(U)$. The inequality $t_1^-(U) \leq t_2^-(U)$ follows from the definition.
\end{proof}

Lemmas \ref{LEMMA_comparison_bis_1} and \ref{LEMMA_comparison_bis_2} apply mainly to obtain the following information on constrained minimizers in $Y_T$:
\begin{proposition}\label{PROPOSITION_comparison_bis_1}
Assume that \ref{hyp_compactness}, \ref{hyp_projections_inverse}, \ref{hyp_regularity} and \ref{hyp_levelsets_bis} hold. Let $c>0$ and $T \geq 1$. Let $\bV_{c,T}$ be a constrained minimizer of $\bE_c$ in $Y_T$ given by Lemma \ref{LEMMA_bcT}. Let $t_1^-:= t_1^-(\bV_{c,T})$ and $t_2^-:=t_2^-(\bV_{c,T})$, where $ t_1^-(\bV_{c,T})$ and $ t_2^-(\bV_{c,T})$ are as in \eqref{comparison_t1}, \eqref{comparison_t2} respectively. Then, it holds that $\CE(\bV_{c,T}(t_2^-))=\alpha_0$ and
\begin{equation}\label{sfT1}
t_2^--t_1^- \leq \sfT_1(c),
\end{equation}
where
\begin{equation}\label{sfT1_definition}
\sfT_1(c):= \frac{2Rc + 2\sqrt{R^2c^2+2R\omega}}{\omega}.
\end{equation}
Moreover, we have that
\begin{equation}\label{comparison_bis_1_t1}
\forall t \leq t_1^-, \hspace{2mm} \CE(\bV_{c,T}(t)) \leq \alpha_-
\end{equation}
and there exist $\delta_0^->0$ and $t_0^- \leq t_1^-$ such that
\begin{equation}\label{comparison_bis_1_t0}
\begin{cases}
\CE(\bV_{c,T}(\cdot)) \mbox{ is monotone in } (-\infty,t_0^-+\delta_0^-] \mbox{ and }\\
\forall t \leq t_0^-+\delta_0^-, \hspace{2mm} \CE(\bV_{c,T}(t)) < \alpha_-.
\end{cases}
\end{equation}
\end{proposition}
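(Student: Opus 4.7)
The plan is to leverage the minimality of $\bV_{c,T}$ in $Y_T$ via the comparison lemmas \ref{LEMMA_comparison_bis_1} and \ref{LEMMA_comparison_bis_2}, together with the elliptic regularity of Lemma \ref{LEMMA_bcT_solution}. I treat the four claims in turn, starting from the easier ones.

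Item (1) follows from continuity and the intermediate value theorem. By Lemma \ref{LEMMA_bcT_solution}, $\bV_{c,T} \in \CA((-T,T))$, hence $\scrH$-continuous there, and by \ref{hyp_smaller_space} the map $t \mapsto \CE(\bV_{c,T}(t))$ is continuous on $(-T,T)$. Since $\CE(\bV_{c,T}(-T)) \leq \alpha_-$ (by $Y_T$-membership) and $\CE(\bV_{c,T}(T)) \geq 0$ by Lemma \ref{LEMMA_positivity} (since $\bV_{c,T}(T) \in \scrF^+_{r_0^+/2}$), the intermediate value theorem together with the ``$-$'' branch selection in the definition of $t_2^-$, and the lower-semicontinuity of $\CE$ at the boundary of $(-T,T)$, forces $\CE(\bV_{c,T}(t_2^-)) = \alpha_0$. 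For item (3), assume by contradiction there exists $s \leq t_1^-$ with $\CE(\bV_{c,T}(s)) > \alpha_-$. Since $\bV_{c,T}(-T) \in \CE^{\alpha_-,-}$ and $\CE(\bV_{c,T}(t_1^-)) \leq \alpha_-$ (by lower-semicontinuity applied at the supremum), one can pick $\tau_1 < s < \tau_2$ in $[-T,t_1^-]$ with $\bV_{c,T}(\tau_i) \in \CE^{\alpha_-,-}$. Lemma \ref{LEMMA_comparison_bis_1} with $\alpha = \alpha_-$ then produces a strict competitor on $[\tau_1,\tau_2]$; gluing with $\bV_{c,T}$ outside still lies in $Y_T$ and contradicts minimality.

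Item (2) is the core of the proof. Assume by contradiction $\sigma := t_2^- - t_1^- > \sfT_1(c)$, and build a competitor $\tilde{\bV}$ that agrees with $\bV_{c,T}$ outside $[t_1^-, t_2^-]$, is constantly equal to $\bV_{c,T}(t_1^-)$ on $[t_1^-, t_2^- - \tau]$ for a parameter $\tau \in (0,\sigma)$ to be optimized, and is the affine interpolation from $\bV_{c,T}(t_1^-)$ to $\bV_{c,T}(t_2^-)$ on $[t_2^- - \tau, t_2^-]$. The convexity of $\CE^{\alpha_0,-}$ given by \ref{hyp_levelsets_bis} ensures the affine part stays in $\CE^{\alpha_0,-}$, so $\CE \leq \alpha_0$ there, while its $\scrL$-length is at most $2R$ by the global bound of \ref{hyp_levelsets_bis}; membership of $\tilde{\bV}$ in $Y_T$ follows from $t_1^- \geq -T$ and $\CE(\bV_{c,T}(t_1^-)) \leq \alpha_-$. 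One then bounds $\bE_c(\tilde{\bV};[t_1^-, t_2^-])$ from above in terms of $\tau$, $R$, $\alpha_0$ and $\alpha_-$, while bounding $\bE_c(\bV_{c,T};[t_1^-, t_2^-])$ from below via Lemma \ref{LEMMA_comparison_bis_2} applied to the portions of $\bV_{c,T}$ with $\CE \geq (\alpha_-+a)/2$; the monotonicity constant $\omega$ of \ref{hyp_levelsets_bis} enters precisely here, producing a contribution of order $\omega \sigma$. Optimizing the resulting inequality in $\tau$ reduces the gain $\bE_c(\bV_{c,T}) - \bE_c(\tilde{\bV}) > 0$ to a quadratic inequality in $\sigma$ whose threshold, after straightforward algebra, is exactly $\sfT_1(c) = (2Rc + 2\sqrt{R^2c^2 + 2R\omega})/\omega$; exceeding it contradicts minimality.

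Item (4) combines item (3) with another comparison. From item (3), $\CE(\bV_{c,T}) \leq \alpha_-$ on $(-\infty, t_1^-]$. Suppose the set $\{t \leq t_1^-: \CE(\bV_{c,T}(t)) = \alpha_-\}$ accumulates at $-\infty$; one can then isolate small intervals on which $\bV_{c,T}$ is essentially constant at a level $\alpha \in (\frac{\alpha_-+a}{2}, \alpha_0)$ close to $\alpha_-$, and apply Lemma \ref{LEMMA_comparison_bis_2} (perturbing towards $\scrF^-$ using item 3 of \ref{hyp_levelsets_bis}) to strictly lower the energy, contradicting minimality. Hence there is $t_0^- \leq t_1^-$ with $\CE(\bV_{c,T}) < \alpha_-$ on $(-\infty, t_0^-]$; by Lemma \ref{LEMMA_bcT_solution} (its ``minus'' branch), the ODE holds on some $(-\infty, t_0^- + \delta_0^-)$. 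Monotonicity of $\CE \circ \bV_{c,T}$ on this interval is then obtained by the Hamiltonian identity $\frac{d}{dt}[\frac{1}{2}\lVert \bV_{c,T}' \rVert_\scrL^2 - \CE(\bV_{c,T})] = -c \lVert \bV_{c,T}' \rVert_\scrL^2$: any interior local minimum of $\CE \circ \bV_{c,T}$ would, combined with the dissipation and yet another comparison against a constant path, force a contradiction with minimality. The principal obstacle is item (2): distilling the competitor construction into a clean quadratic inequality requires carefully matching bounds coming from convexity, the monotonicity constant $\omega$, and the diameter $R$, and it is here that \ref{hyp_levelsets_bis} does all of its genuine work.
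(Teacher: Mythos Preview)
Your handling of items (1) and (3) is fine and matches the paper. The real problem is item (2): the paper does \emph{not} obtain the bound $t_2^--t_1^-\leq\sfT_1(c)$ by a competitor construction, and your sketch has a genuine gap precisely at the point where $\omega$ is supposed to enter.

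In the paper, one first notes $(t_1^-,t_2^-)\subset(-T,T)$, so the ODE holds there by Lemma \ref{LEMMA_bcT_solution}. One then fixes $\bv_{c,T}^-\in\scrF^-$ with $\lVert \bV_{c,T}(t_2^-)-\bv_{c,T}^-\rVert_\scrL\leq R$ (this is the only place the global bound $R$ from \ref{hyp_levelsets_bis} is used), sets $\rho(t):=\lVert \bV_{c,T}(t)-\bv_{c,T}^-\rVert_\scrL^2$, and uses (\ref{omega_def}) in the form $\langle D_\scrL\CE(\bV_{c,T}(t)),\bV_{c,T}(t)-\bv_{c,T}^-\rangle_\scrL\geq\omega$ on $(t_1^-,t_2^-)$. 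Plugging the ODE into $\rho''$ yields the differential inequality $\rho''+c\rho'\geq\omega$. This forces $\rho$ to have no local maximum; integrating twice against the bound $\rho\leq R$ gives $Rc(t_2^--t_1^-)+R\geq\frac{\omega}{2}(t_2^--t_1^-)^2$ (with an extra factor $2$ if $\rho$ has an interior minimum), and $\sfT_1(c)$ is exactly twice the positive root of $\frac{\omega}{2}x^2-Rcx-R$.

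Your argument cannot reproduce this. Lemma \ref{LEMMA_comparison_bis_2} only says a \emph{constant} path at level $\alpha>\frac{\alpha_-+a}{2}$ can be strictly improved; it yields no quantitative lower bound on $\bE_c(\bV_{c,T};[t_1^-,t_2^-])$ in terms of $\omega$. On $(t_1^-,t_2^-)$ all you know is $\CE(\bV_{c,T})\in(\alpha_-,\alpha_0)$, so the potential part may be as small as $\alpha_-\int_{t_1^-}^{t_2^-}e^{ct}dt$ and the kinetic part may be arbitrarily small. Comparing this with your competitor (whose potential part is at most $\alpha_0$ on the affine piece and $\alpha_-$ on the constant piece) produces no $\omega$-term at all, and there is no mechanism by which the specific quadratic whose root is $\sfT_1(c)$ could appear. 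The constant $\omega$ is a bound on a \emph{directional derivative} of $\CE$, and it enters the paper's proof only through the ODE; it does not translate into an energy gap of order $\omega\sigma$.

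For item (4), your Hamiltonian-identity argument for monotonicity is also not what is done and is not clearly sufficient. The paper again uses Lemma \ref{LEMMA_comparison_bis_1}: since every sublevel set $\CE^{\alpha,-}$ with $\alpha\leq 0$ is convex, $\CE(\bV_{c,T}(\cdot))$ can have no strict local maximum and at most one strict local minimum on $(-\infty,t_1^-]$, which already gives monotonicity on a left half-line. The exclusion of $\CE(\bV_{c,T}(t_n))=\alpha_-$ along $t_n\to-\infty$ is a dichotomy: either strictly lower values are interspersed (then Lemma \ref{LEMMA_comparison_bis_1} with some $\tilde\alpha<\alpha_-$ gives a contradiction), or $\CE(\bV_{c,T})\equiv\alpha_-$ on a half-line; in the latter case Lemma \ref{LEMMA_comparison_bis_2} forbids $\bV_{c,T}$ to be constant on subintervals, so the kinetic integral there is positive, and a time-rescaling competitor $t\mapsto\bV_{c,T}(\tfrac12 t+\tfrac12\tilde t)$ strictly halves it while leaving the potential part unchanged.
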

\begin{proof}
Notice that due to the definition of $t_2^-$ in \eqref{comparison_t2} for all $t \leq t_2^-$ we have $\bV_{c,T}(t) \in \CE^{\alpha_0,-}$. Therefore, $t_2^- < T$.
%Indeed, if that was not the case we could find $\tilde{t}_1 < \tilde{t}_2 < t_2^-$ and $\delta>0$ such that
%\begin{equation}\label{comparsion_bis_tilde1}
%\CE(\bV_{c,T}(\tilde{t}_1)) = \alpha_0, \hspace{1mm} \bV_{c,T}(\tilde{t}_1) \in \CE^{\alpha_0,-}
%\end{equation}
%and
%\begin{equation}
%\forall t \in (\tilde{t}_2-\delta,\tilde{t}_2+\delta), \hspace{2mm}\CE(\bV_{c,T}(t)) > \alpha_0.
%\end{equation}
%because the level set $\CE^{\alpha_0}$ decomposes into two components which are disjoint and closed in $\scrL$ and for $t \leq T$ small enough we have $\bV_{c,T} \in \CE^{\alpha_-} \subset \CE^{\alpha_0,-}$. By Lemma \ref{LEMMA_comparison_bis_1}, we can now find $\tilde{U} \in Y_T$ such that $\bE_c(\tilde{U}) < \bE_c(\bV_{c,T})$, which contradicts Lemma \ref{LEMMA_bcT}. Therefore, for all $t \leq t_2^-$ we have $\bV_{c,T}(t) \in \CE^{\alpha_0,-}$. 
Notice now that if $t_1^-=t_2^-$, then \eqref{sfT1_definition} holds trivially.  Assume then that $t_1^- < t_2^-$. By Lemma \ref{LEMMA_comparison_bis_preliminary}, we have that $(t_1^-,t_2^-) \subset (-T,T)$ with $t_2^- < T$. Therefore, since we assume that \ref{hyp_regularity} holds we have that $\bV_{c,T} \in \CA((-T,T))$ fulfills \eqref{bcT_equation} by Lemma \ref{LEMMA_bcT_solution}. The definition of $\CA((-T,T))$ in  \eqref{CA_def} implies that $t \in [t_1^-,t_2^-]  \to \CE(\bV_{c,T}(t))$ is continuous, which means that $\CE(\bV_{c,T}(t_2^-))=\alpha_0$. Due to the definitions of $t_1^-$ and $t_2^-$ in \eqref{comparison_t1} and \eqref{comparison_t2} respectively, we have that
\begin{equation}
\forall t \in (t_1^-,t_2^-), \hspace{2mm} \CE(\bV(t)) \in (\alpha_-,\alpha_0), \hspace{1mm} \bV(t) \in \CE^{\alpha_0,-}
\end{equation}
which means that we can use \eqref{omega_def} to obtain
\begin{equation}
\forall \bv^- \in \scrF^-, \forall t \in (t_1^-,t_2^-), \hspace{2mm} \frac{d}{d\lambda} (\CE(\bv^-+\lambda(\bV_{c,T}(t)-\bv^-))(1) \geq \omega > 0,
\end{equation}
which means, using \eqref{abstract_ipp} in \ref{hyp_smaller_space}
\begin{equation}\label{comparison_prop_ineq1}
\forall \bv^- \in \scrF^-, \forall t \in (t_1^-,t_2^-), \hspace{2mm} \langle D_{\scrL}(\bV_{c,T}),\bV_{c,T}(t)-\bv^- \rangle_{\scrL} \geq \omega.
\end{equation}
By \ref{hyp_levelsets_bis}, there exists $\bv^-_{c,T} \in \scrF^-$ such that $\lVert \bV_{c,T}(t_2^-)-\bv^-_{c,T} \rVert_{\scrL} \leq R$. Define now the function
\begin{equation}
\rho: t \in [t_1^-,t_2^-] \to \lVert \bV_{c,T}(t)-\bv^-_{c,T} \rVert_{\scrL}^2 \in \R
\end{equation}
so that
\begin{equation}\label{rho_t2}
\rho(t_2^-) \leq R.
\end{equation}
By \eqref{bcT_equation} and \ref{hyp_regularity}, we have that $\rho \in \CC^2((t_1,t_2))$. We have
\begin{equation}
\forall t \in (t_1^-,t_2^-), \hspace{2mm} \rho'(t) = \langle \bV_{c,T}'(t),\bV_{c,T}(t)-\bv^-_{c,T} \rangle_{\scrL}
\end{equation}
and, subsequently, using \eqref{bcT_equation}
\begin{equation}
\forall t \in (t_1^-,t_2^-), \hspace{2mm} \rho''(t)= \lVert \bV_{c,T} \rVert_{\scrL}^2+\langle D_\scrL(\bV_{c,T}(t)),\bV_{c,T}(t)-\bv^-_{c,T} \rangle_{\scrL}-c\rho'(t),
\end{equation}
which, by \eqref{comparison_prop_ineq1} means that
\begin{equation}\label{rho_ineq}
\forall t \in (t_1^-,t_2^-), \hspace{2mm} \rho''(t)+c\rho'(t) \geq \omega.
\end{equation}
Notice that \eqref{rho_ineq} implies that $\rho$ does not possess any local maximum in $[t_1^-,t_2^-]$, as we have $\omega>0$. That is, either
\begin{enumerate}
\item The function $\rho$ is non-decreasing in $[t_1^-,t_2^-]$.
\item The function $\rho$ possesses a unique local minimum at $t_3^- \in (t_1^-,t_2^-)$.
\end{enumerate}
Assume that 1. holds. Then for each $t \in (t_1^-,t_2^-)$ we can integrate \eqref{rho_ineq} in $[t_1^-,t]$ to obtain
\begin{equation}\label{int_t1t}
c(\rho(t)-\rho(t_1^-))+\rho'(t)-\rho'(t_1^-) \geq \omega (t-t_1^-)
\end{equation}
so that, since we assume that $\rho' \geq 0$ in $(t_1^-,t_2^-)$
\begin{equation}\label{rho_ineq_t1}
c\rho(t)+\rho'(t) \geq \omega(t-t_1^-)
\end{equation}
which, integrating now in $[t_1^-,t_2^-]$ gives
\begin{equation}
c\int_{t_1^-}^{t_2^-}\rho(t)dt + \rho(t_2^-) \geq \frac{\omega}{2}(t_2^--t_1^-)^2.
\end{equation}
By \eqref{rho_t2}, and since $\rho$ does not possess any local maximum, the previous becomes,
\begin{equation}\label{ineq_t1_t2}
Rc(t_2^--t_1^-)+R \geq \frac{\omega}{2}(t_2^--t_1^-)^2.
\end{equation}
The roots of the polynomial
\begin{equation}
\frac{\omega}{2}x^2-Rcx-R
\end{equation}
are
\begin{equation}\label{roots}
x_1= \frac{Rc + \sqrt{R^2c^2+2R\omega}}{\omega}>0 \mbox{ and } x_2= \frac{Rc-\sqrt{R^2c^2+2R\omega}}{\omega}<0,
\end{equation}
therefore, since \eqref{ineq_t1_t2} holds, we must have
\begin{equation}\label{ineq_t12_x1}
t_2^--t_1^- \leq x_1.
\end{equation}
Assume now that 1. does not hold. Then 2. holds. Notice that since $t_3^-$ is a local minimum of $\rho$ in $(t_1^-,t_2^-)$ we have $\rho'(t_3^-)=0$. We first integrate \eqref{rho_ineq} in $[t_3^-,t]$ for $t \in (t_3^-,t_2^-)$ to get
\begin{equation}
c(\rho(t)-\rho(t_3^-))+\rho'(t) \geq \omega(t-t_3^-)
\end{equation}
which means that
\begin{equation}
\forall t \in (t_3^-,t_2^-), \hspace{2mm} c\rho(t) + \rho'(t) \geq \omega(t-t_3^-),
\end{equation}
which is exactly \eqref{rho_ineq_t1} with $t_1^-$ replaced by $t_3^-$. The same reasoning as above gives
\begin{equation}\label{ineq_t13_x1}
t_2^--t_3^- \leq x_1
\end{equation}
with $x_1$ as in \eqref{roots}. We now take $t \in (t_1^-,t_3^-)$ and integrate \eqref{rho_ineq} in $[t,t_3^-]$. We obtain
\begin{equation}
-\rho'(t)+c(\rho(t_3^-)-\rho(t)) \geq \omega(t_3^--t)
\end{equation}
so that
\begin{equation}
\forall t \in [t_1^-,t_3^-], \hspace{2mm} -\rho'(t)+c\rho(t_3) \geq \omega(t_3^--t)
\end{equation}
which, by integrating in $[t_1^-,t_3^-]$ and using \eqref{rho_t2} along with the fact that $\rho$ has no local maximum in $[t_1^-,t_2^-]$, gives
\begin{equation}
R+Rc(t_3^--t_1^-) \geq \frac{\omega}{2}(t_3^--t_1)^2
\end{equation}
which is exactly \eqref{ineq_t1_t2} but with $t_2^-$ replaced by $t_3^-$. Therefore, we get
\begin{equation}
t_3^--t_1^- \leq x_1
\end{equation}
which in combination with \eqref{ineq_t13_x1} gives
\begin{equation}\label{ineq_t12_x1_2}
t_2^--t_1^- \leq 2x_1.
\end{equation}
As a consequence, we have that either \eqref{ineq_t12_x1} or \eqref{ineq_t12_x1_2} holds. That is, we have shown that \eqref{sfT1}, with $\sfT_1(c)$ as in \eqref{sfT1_definition}, holds. We now show that \eqref{comparison_bis_1_t1} holds. Otherwise, we would have $-T < t_1^-$ and some $\tilde{t} \in (-T,t_1^-)$ such that
\begin{equation}
\CE(\bV_{c,T}(\tilde{t})) > \alpha_-
\end{equation}
and, since we have that $\CE(\bV_{c,T}(-T)) $ and $\CE(\bV_{c,T}(t_1^-))$ are smaller than $\alpha_-$, we obtain a contradiction by Lemma \ref{LEMMA_comparison_bis_1}. Therefore, \eqref{comparison_bis_1_t1} has been established. We now show the existence $t_0^- \leq t_1^-$ and $\delta_0^->0$ such that \eqref{comparison_bis_1_t0} holds. We begin by showing the existence of $\tilde{t}_0^- \leq t_1^-$ such that
\begin{equation}\label{comparison_bis_1_t0_tilde}
\forall t \leq \tilde{t}_0^-, \hspace{2mm} \CE(\bV_{c,T}(t)) < \alpha_-.
\end{equation}
By contradiction, assume that there exists a sequence $(t_n)_{n \in \N}$ in $(-\infty,t_1^-]$ such that $t_n \to -\infty$ as $n \to \infty$ and
\begin{equation}\label{contradiction_alpha-}
\forall n \in \N, \hspace{2mm} \CE(\bV_{c,T}(t_n)) =  \alpha_-.
\end{equation}
Assume by contradiction that there exists a sequence $(\tilde{t}_n)_{n \in \N}$ in $(-\infty,t_1^-]$ such that $\tilde{t}_n \to -\infty$ and
\begin{equation}
\forall n \in \N, \hspace{2mm} \CE(\bV_{c,T}(\tilde{t}_n))< \alpha_-
\end{equation}
which, in combinantion with \eqref{contradiction_alpha-}, shows the existence of $t_a<t_b<t_c \leq t_1^-$ and $\tilde{\alpha}_-<\alpha_-<0$ such that
\begin{equation}
\bV_{c,T}(t_a), \bV_{c,T}(t_c) \in \CE^{\tilde{\alpha}_-}=\CE^{\tilde{\alpha}_-,-} \mbox{ and } \CE(\bV_{c,T}(t_b)) = \alpha_- > \tilde{\alpha}_-
\end{equation}
which, by Lemma \ref{LEMMA_comparison_bis_1}, gives a contradiction with the minimality of $\bV_{c,T}$. As a consequence, we have that there exists $\tilde{t} \leq t_1^-$ such that
\begin{equation}\label{contradiction_tilde_t}
\forall t \leq \tilde{t}, \hspace{2mm} \CE(\bV_{c,T}(t)) = \alpha_-.
\end{equation}
By Lemma \ref{LEMMA_comparison_bis_2}, we have that $\bV_{c,T}$ cannot be constant in any non-empty interval $I \subset (-\infty,\tilde{t}]$. Therefore, we have in particular that
\begin{equation}\label{energy_tilde_t}
\int_{-\infty}^{\tilde{t}}\frac{\lVert \bV_{c,T}'(t)\rVert_{\scrL}^2}{2}e^{ct}dt>0.
\end{equation}
Let $\tilde{U}$ be the function defined as
\begin{equation}
\tilde{U}(t)= \begin{cases}
\bV_{c,T}\left( \frac{1}{2}t+\frac{1}{2}\tilde{t} \right) &\mbox{ if } t \leq \tilde{t} \\
\bV_{c,T}(t) &\mbox{ if } t \geq \tilde{t}
\end{cases}
\end{equation}
which is well defined and belongs to $Y_T$. Notice that for all $t \leq \tilde{t}$, we have that $\frac{1}{2}t+\frac{1}{2}\tilde{t} \leq \tilde{t}$, so that $\CE(\tilde{U}(t))= \alpha_-$ by \eqref{contradiction_tilde_t}. Therefore,
\begin{equation}
\int_{-\infty}^{\tilde{t}} \CE(\tilde{U}(t)) e^{ct}dt= \int_{-\infty}^{\tilde{t}} \CE(\bV_{c,T}(t))e^{ct}dt,
\end{equation}
meaning that
\begin{equation}\label{energy_Utilde_1}
\int_\R \CE(\tilde{U}(t))e^{ct}dt = \int_\R \CE(\bV_{c,T}(t)) e^{ct}dt.
\end{equation}
Now, notice that for a. e. $t \leq \tilde{t}$ it holds $U'(t)=\frac{1}{2}\bV_{c,T}\left( \frac{1}{2}t+\frac{1}{2}\tilde{t} \right)$. Hence, we have that
\begin{align}
\int_{-\infty}^{\tilde{t}} \frac{\lVert \tilde{U}'(t) \rVert_{\scrL}^2}{2}e^{ct}dt&= \frac{1}{4} \int_{-\infty}^{\tilde{t}} \frac{\lVert \bV_{c,T}\left(  \frac{1}{2}t+\frac{1}{2}\tilde{t}\right) \rVert_{\scrL}^2}{2}e^{ct}dt\\ &\leq \frac{1}{4}\int_{-\infty}^{\tilde{t}}\frac{\lVert \bV_{c,T}\left(  \frac{1}{2}t+\frac{1}{2}\tilde{t}\right) \rVert_{\scrL}^2}{2}e^{c\left( \frac{1}{2}t+\frac{1}{2}\tilde{t} \right)}dt \\
&= \frac{1}{2} \int_{-\infty}^{\tilde{t}} \frac{\lVert \bV_{c,T}'(t)\rVert_{\scrL}^2}{2}e^{ct}dt,
\end{align}
which, by \eqref{energy_tilde_t}, means that
\begin{equation}
\int_\R \frac{\lVert \tilde{U}'(t) \rVert_{\scrL}^2}{2}e^{ct}dt < \int_\R \frac{\lVert \bV_{c,T}'(t)\rVert_{\scrL}^2}{2}e^{ct} dt,
\end{equation}
so that, taking also into account \eqref{energy_Utilde_1}, we have obtained
\begin{equation}
\bE_c(\tilde{U})< \bE_c(\bV_{c,T}),
\end{equation}
which is a contradiction, because $\bV_{c,T}$ minimizes $\bE_c$ in $Y_T$. As a consequence, \eqref{contradiction_alpha-} cannot hold, which means that there exists $\tilde{t}_0^- \leq t_1^-$ such that \eqref{comparison_bis_1_t0_tilde} holds. We now establish the existence of $t_0^- \leq t_1^-$ and $\delta_0^->0$ such that \eqref{comparison_bis_1_t0} holds. Applying Lemma \ref{LEMMA_comparison_bis_1}, we have that $\CE(\bV_{c,T}(\cdot))$ does not possess any strict local maximum, and at most one strict local minimum, in $(-\infty,t_1^-]$. Therefore, there exist $t_0^- \leq \tilde{t}_0^-$ and $\delta_0^->0$ such that $\CE(\bV_{c,T}(\cdot))$ is monotone in $(-\infty,t_0^-+\delta_0^-]$ and $t_0^-+\delta_0^- \leq \tilde{t}_0^-$. Therefore, \eqref{comparison_bis_1_t0} follows by \eqref{comparison_bis_1_t0_tilde}.
%In order to continue the proof, we need to show that $t \in [t_1^-,t_2^-] \to \rho(t)$ is non-decreasing. We begin by showing that $t \in [t_1^-,t_2^-] \to \CE(\bV_{c,T}(t))$ is non-decreasing. By contradiction, assume that there exist $t_3^-<t_4^-$ lying in $(t_1^-,t_2^-)$ such that
%\begin{equation}
%\CE(\bV_{c,T}(t_3^-))>\CE(\bV_{c,T}(t_4^-)).
%\end{equation}
%Since $\CE(\bV_{c,T}(-T))=\alpha_-$, $\CE(\bV_{c,T}) > \alpha_-$ in $(t_1^-,t_2^-)$ and $t \in [-T,T] \to \CE(\bV_{c,T}(t))$ is continuous by \eqref{bcT_equation}) and \ref{hyp_regularity}, we have that there exists $t_5^- \in (-T t_3^-)$ such that $\CE(\bV_{c,T}(t_5^-)=\CE(\bV_{c,T}(t_3^-)$ and $\bV_{c,T}(t_5^-) \in \CE^{\alpha_0,-}$. However, such a possibility is excluded by Lemma \ref{LEMMA_comparison_bis_1} because $\bV_{c,T}$ is a minimizer of $\bE_c$ in $Y_T$. Therefore, we have shown that $t \in [t_1^-,t_2^-] \to \CE(\bV_{c,T}(t))$ is non-decreasing. Assume now by contradiction that there exist 
\end{proof}
For $U \in Y_T$, recall the definition of $t^+(U,\eps_0^+)$ in \eqref{comparison_t+}. The goal now will be to provide a uniform bound on $t^+(U,\eps_0^+)-t_2^-(U)$. The result is very close to the second part of the comparison result in \cite{oliver-bonafoux-tw}.
\begin{proposition}\label{PROPOSITION_comparison_bis_+}
Assume that \ref{hyp_compactness}, \ref{hyp_projections_inverse}, \ref{hyp_projections_H}, \ref{hyp_regularity} and \ref{hyp_levelsets_bis} hold. Let $c > 0$ and $T \geq 1$. Consider $\bV_{c,T} \in Y_T$ a minimizer of $\bE_c$ in $Y_T$, which exists by Lemma \ref{LEMMA_bcT}. Let $t_2^-:= t_2^-(\bV_{c,T})$ as in \eqref{comparison_t2} and $t^+:= t^+(\bV_{c,T},\eps_0^+)$ as in \eqref{comparison_t+}. Then, we have that
\begin{equation}\label{comparison_bis_+_t+}
\forall t \geq t^+, \hspace{2mm} \dist_{\scrL}(\bV_{c,T}(t),\scrF^+) < \frac{r_0^+}{2}
\end{equation}
Furthermore, we have that
\begin{equation}\label{sfT_2}
0 < t^+-t_2^- \leq \sfT_2(c),
\end{equation}
where
\begin{equation}\label{sfT_2_definition}
\sfT_2(c):= \frac{1}{c} \ln\left( \frac{-a}{\alpha_{\sstar}}+1 \right)
\end{equation}
with $\alpha_{\sstar}>0$ a constant independent on $c$, $T$ and $U$.
\end{proposition}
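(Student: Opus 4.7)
The plan is to handle the three assertions in turn. The strict inequality $t^+ > t_2^-$ is immediate: at $t = t_2^-$ the trajectory satisfies $\CE(\bV_{c,T}(t_2^-)) = \alpha_0$, while $\eps_0^+$ as defined in (\ref{eps_0+}) is strictly smaller than $\alpha_0$, so the condition $\CE \leq \eps_0^+$ in the definition of $t^+$ cannot hold at $t_2^-$. For the no-exit property (\ref{comparison_bis_+_t+}), I argue by contradiction: if $\bV_{c,T}$ were to escape $\scrF^+_{r_0^+/2}$ at some $\tau > t^+$, I would build a competitor coinciding with $\bV_{c,T}$ outside a neighborhood of $\tau$ and obtained by composing $\bV_{c,T}$ with the projection $P^+$ inside. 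The coercivity (\ref{coercivityL}), the Lipschitz property of $P^+$ in $\scrH$ from \ref{hyp_projections_H}, and the estimate (\ref{convex_ineq}) of Lemma \ref{LEMMA_convex} combine to show that this new function still belongs to $Y_T$ and has strictly smaller $\bE_c$, contradicting the minimality of $\bV_{c,T}$. This is the exact analogue of the no-exit step carried out in \cite{oliver-bonafoux-tw}.

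The core new content is the quantitative bound (\ref{sfT_2}). I first produce a uniform positive lower bound on the density $\CE(\bV_{c,T}(\cdot))$ on the transition interval: for any $t \in [t_2^-, t^+)$ at least one of the two conditions defining $t^+$ fails, so either $\CE(\bV_{c,T}(t)) > \eps_0^+$, or $\dist_{\scrL}(\bV_{c,T}(t),\scrF^+) > r_0^+/2$. In the second case, either $\bV_{c,T}(t) \in \scrF^+_{r_0^+}$, in which case the coercivity (\ref{coercivityL}) yields $\CE(\bV_{c,T}(t)) \geq \kappa_{r_0^+/2}^+$, or $\bV_{c,T}(t)$ lies outside $\scrF^+_{r_0^+}$, in which case continuity of $\CE(\bV_{c,T}(\cdot))$ together with the level-set separation $\CE^{\alpha_0,+} \subset \scrF^+_{r_0^+/2}$ (see (\ref{alpha_0+})) forces $\CE(\bV_{c,T}(t)) \geq \alpha_0$. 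Setting $\alpha_{\sstar} := \min\{\eps_0^+, \kappa_{r_0^+/2}^+, \alpha_0\} > 0$, which depends only on $\CE$ and $\scrF^+$, I obtain $\CE(\bV_{c,T}(t)) \geq \alpha_{\sstar}$ on $[t_2^-, t^+)$.

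The final step is a comparison in the spirit of \cite{alikakos-katzourakis,oliver-bonafoux-tw}. I consider a competitor $\tilde U \in Y_T$ that coincides with $\bV_{c,T}$ on $(-\infty, t_2^-]$, executes a bounded-cost transition from $\bV_{c,T}(t_2^-)$ onto a point of $\scrF^+$ on a fixed-length interval (using the inverse projections furnished by \ref{hyp_projections_inverse} to ensure the energy stays controlled), and then remains constant in $\scrF^+$. Minimality of $\bV_{c,T}$ gives $\bE_c(\bV_{c,T}) \leq \bE_c(\tilde U)$; cancelling the common contribution on $(-\infty, t_2^-]$, using the nonnegativity $\CE \geq 0$ on $\scrF^+_{r_0^+/2}$ granted by Lemma \ref{LEMMA_positivity} to discard the tail of $\bV_{c,T}$ on $[t^+, +\infty)$, and using the lower bound $\CE \geq \alpha_{\sstar}$ on $[t_2^-, t^+]$, I arrive at
\[
\frac{\alpha_{\sstar}}{c}\bigl(e^{ct^+} - e^{ct_2^-}\bigr) \;\leq\; \frac{-a}{c}\,e^{ct_2^-},
\]
where the right-hand side encodes the transition cost measured against the depth $-a$ of the well $\CE = a$ on $\scrF^-$. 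Rearranging and taking logarithms gives (\ref{sfT_2}) with $\sfT_2(c)$ as in (\ref{sfT_2_definition}). The main obstacle is pinning down precisely the constant $-a$ inside the logarithm: this requires the competitor $\tilde U$ and the lower bound on $\bE_c(\bV_{c,T};[t_2^-,+\infty))$ to be balanced so that the transition cost is charged exactly to the energy reservoir of the low-energy phase rather than to a generic multiplicative constant. This bookkeeping is the adaptation from \cite{oliver-bonafoux-tw} to the present setting, and the comparison lemmas \ref{LEMMA_comparison_bis_1}--\ref{LEMMA_comparison_bis_2} already proved in this paper furnish the remaining ingredients.
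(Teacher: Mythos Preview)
Your overall architecture is close to the paper's, but the argument you give for the uniform lower bound $\CE(\bV_{c,T}(t)) \geq \alpha_{\sstar}$ on $[t_2^-,t^+]$ has a genuine gap. In your case analysis, when $\bV_{c,T}(t)$ lies outside $\scrF^+_{r_0^+}$ you claim that the level-set separation $\CE^{\alpha_0,+} \subset \scrF^+_{r_0^+/2}$ ``forces $\CE(\bV_{c,T}(t)) \geq \alpha_0$''. It does not: that inclusion only tells you that a point with $\CE \leq \alpha_0$ which is outside $\scrF^+_{r_0^+/2}$ must lie in $\CE^{\alpha_0,-}$; it says nothing about the value of $\CE$ there. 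Nothing a priori prevents the trajectory, after crossing level $\alpha_0$ at $t_2^-$, from dipping back into $\CE^{\alpha,-}$ for some $\alpha < \alpha_{\sstar}$ before finally heading to $\scrF^+_{r_0^+/2}$. Your ``direct'' case split does not exclude this scenario, so the lower bound fails exactly in the case you handle last.

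The paper closes this gap by invoking minimality via Lemma~\ref{LEMMA_comparison_bis_1} rather than by a pointwise case analysis: if for some $t\in(t_2^-,t^+)$ one had $\CE(\bV_{c,T}(t))=\alpha<\alpha_{\sstar}$ with $\bV_{c,T}(t)\in\CE^{\alpha,-}$, then since also $\bV_{c,T}(-T)\in\CE^{\alpha_-}\subset\CE^{\alpha,-}$ and $\CE(\bV_{c,T}(t_2^-))=\alpha_0>\alpha$ at the intermediate time $t_2^-$, Lemma~\ref{LEMMA_comparison_bis_1} (projection onto the convex set $\CE^{\alpha,-}$) produces a competitor in $Y_T$ with strictly smaller energy, contradicting minimality. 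This is also how the paper first obtains $\CE\geq 0$ for $t\geq t^+$, which is what actually drives the no-exit step; your proposed competitor built from $P^+$ and Lemma~\ref{LEMMA_convex} is not justified either, since $\scrF^+$ is not assumed convex and $P^+$ is only defined on $\scrF^+_{r_0^+}$. Once the lower bound is in place, your final comparison computation is the same as the paper's (which likewise defers it to \cite{oliver-bonafoux-tw}), and the paper in fact takes $\alpha_{\sstar}=\min\{\alpha_0,\eps_0^+\}$.
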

\begin{proof}
We claim that for all $t \geq t^+$, it holds that $\CE(\bV_{c,T}(t)) \geq 0$. Indeed, otherwise we could find $\tilde{t} \geq t^+$ such that $\CE(\bV_{c,T}(\tilde{t}))=\alpha < 0$.  Since we necessarily have $t^+>-T$, and $\bV_{c,T}(-T) \in \CE^{\max\{\alpha_-,\alpha\}}$, by Lemma \ref{LEMMA_comparison_bis_1}, we can find $\tilde{U} \in Y_T$ such that $\bE_c(\tilde{U}) <  \bE_c(\bV_{c,T})$, a contradiction. Therefore, arguing as in the second part of the comparison result in \cite{oliver-bonafoux-tw} we obtain that \eqref{comparison_bis_+_t+} must hold. In order to prove \eqref{sfT_2}, we first show that $t^+ > t_2^-$. This follows from the fact that for all $t \leq t_2^-$ we must have $\bV_{c,T}(t) \in \CE^{\alpha_0,-}$, $\bV_{c,T}(t^+) \in \scrF^+_{r_0^+/2}$ and $\scrF^+_{r_0^+/2} \cap \CE^{\alpha_0,-}=\emptyset$. See \ref{hyp_levelsets_bis}. Let $\alpha_{\sstar}:=\min\{\alpha_0,\eps_0^+\}$. We claim that
\begin{equation}\label{alpha_starstar}
\forall t \in [t_2^-,t^+], \hspace{2mm} \CE(\bV_{c,T}(t)) \geq \alpha_{\sstar}.
\end{equation}
Indeed, assume by contradiction that for some $t \in (t_2^-,t^+)$ we have $\CE(\bV_{c,T}(t)) = \alpha < \alpha_{\sstar}$. If $\bV_{c,T}(t) \in \scrF^+_{r_0^+/2}$, we have a contradiction with the definition of $t^+$ since $\alpha< \eps_0^+$. Assume then $\bV_{c,T}(t) \not \in \scrF^+_{r_0^+/2}$. Since $\CE^{\alpha,+} \subset \CE^{\alpha_0,+} \subset \scrF^+_{r_0^+/2}$ by \ref{hyp_levelsets_bis}, we have that $\bV_{c,T}(t) \in \CE^{\alpha,-}$. We have that $\bV_{c,T}(-T) \in \CE^{\alpha_-}\subset \CE^{\alpha,-}$ and $\bV_{c,T}(t_2^-) = \alpha_0 > \alpha$ by Proposition \ref{PROPOSITION_comparison_bis_1}. Therefore, Lemma \ref{LEMMA_comparison_bis_1} leads to a contradiction. As a consequence, we have shown that \eqref{alpha_starstar} holds. Arguing as in the last part of the corresponding result in \cite{oliver-bonafoux-tw}, we obtain that \eqref{sfT_2}, with $\sfT_2(c)$ as in \eqref{sfT_2_definition} holds. 
\end{proof}
Propositions \ref{PROPOSITION_comparison_bis_1} and \ref{PROPOSITION_comparison_bis_+} can be summarized as follows:
\begin{corollary}\label{COROLLARY_comparison_bis}
Assume that \ref{hyp_compactness}, \ref{hyp_projections_inverse}, \ref{hyp_projections_H}, \ref{hyp_regularity} and \ref{hyp_levelsets_bis} hold. Let $c > 0$ and $T \geq 1$. Consider $\bV_{c,T} \in Y_T$ a minimizer of $\bE_c$ in $Y_T$, which exists by Lemma \ref{LEMMA_bcT}. Let $t_1^-:=t_1^-(\bV_{c,T})$ as in \eqref{comparison_t1} and $t^+:= t^+(\bV_{c,T},\eps_0^+)$ as in \eqref{comparison_t+}. Then it holds that
\begin{equation}\label{corollary_comparison_bis+}
\forall t \geq t^+, \hspace{2mm} \dist_{\scrL}(\bV_{c,T}(t),\scrF^+) < \frac{r_0^+}{2},
\end{equation}
\begin{equation}\label{corollary_comparison_bis_t1-}
\forall t \leq t_1^-, \hspace{2mm} \CE(\bV_{c,T}(t)) \leq \alpha_-
\end{equation}
and there exist $\delta_0^->0$ and $t_0^- \leq t_1^-$ such that
\begin{equation}\label{corollary_comparison_bis_t0-}
\begin{cases}
\CE(\bV_{c,T}(\cdot)) \mbox{ is monotone in } (-\infty,t_0^-+\delta_0^-] \mbox{ and }\\
\forall t \leq t_0^-+\delta_0^-, \hspace{2mm} \CE(\bV_{c,T}(t)) < \alpha_-.
\end{cases}
\end{equation}
Moreover, we have
\begin{equation}\label{sfT_starstar}
0<t^+-t_1^- \leq \sfT_{\sstar}(c)
\end{equation}
where
\begin{equation}\label{sfT_starstar_definition}
\sfT_{\sstar}(c):=\sfT_1(c)+\sfT_2(c)
\end{equation}
with $\sfT_1(c)$ as in \eqref{sfT1_definition} and $\sfT_2(c)$ as in \eqref{sfT_2_definition}. That is,
\begin{equation}
c \in (0,+\infty) \to \sfT_{\sstar}(c) \in (0,+\infty)
\end{equation}
is continuous and independent on $T$. Finally, if $t_2^-:=t_2^-(\bV_{c,T})$ as in \eqref{comparison_t2}, we have $t_2^- \in (t_1^-,t^+)$, $\bV_{c,T}(t_2^-) \in \CE^{\alpha_0,-}$ and
\begin{equation}\label{positivity_bis}
\forall t \geq t_2^-, \hspace{2mm} \CE(\bV_{c,T}(t)) \geq 0.
\end{equation}
\end{corollary}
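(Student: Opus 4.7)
The plan is to simply assemble the statement from the two preceding propositions, together with a small extra argument for the positivity (\ref{positivity_bis}). Since each individual item has essentially already been established, no new idea is required: this is a bookkeeping step that packages the comparison results for later use.

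First I would observe that (\ref{corollary_comparison_bis+}) is a direct restatement of (\ref{comparison_bis_+_t+}) in Proposition \ref{PROPOSITION_comparison_bis_+}, while (\ref{corollary_comparison_bis_t1-}) and (\ref{corollary_comparison_bis_t0-}) are restatements of (\ref{comparison_bis_1_t1}) and (\ref{comparison_bis_1_t0}) in Proposition \ref{PROPOSITION_comparison_bis_1}. The assertions that $t_2^- \in (t_1^-,t^+)$ and that $\bV_{c,T}(t_2^-) \in \CE^{\alpha_0,-}$ with $\CE(\bV_{c,T}(t_2^-))=\alpha_0$ are likewise contained in those two propositions: the inequality $t_1^- \leq t_2^-$ is Lemma \ref{LEMMA_comparison_bis_preliminary}, the strict inequality $t_2^- < t^+$ was shown at the very beginning of the proof of Proposition \ref{PROPOSITION_comparison_bis_+} (using the disjointness $\CE^{\alpha_0,-} \cap \scrF^+_{r_0^+/2}=\emptyset$ from \ref{hyp_levelsets_bis}), and the identity $\CE(\bV_{c,T}(t_2^-))=\alpha_0$ was derived in Proposition \ref{PROPOSITION_comparison_bis_1}. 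For the time bound, I would simply add
\begin{equation}
t^+-t_1^-= (t^+-t_2^-)+(t_2^--t_1^-) \leq \sfT_2(c)+\sfT_1(c)=\sfT_{\sstar}(c),
\end{equation}
combining (\ref{sfT1}) and (\ref{sfT_2}); the continuity of $\sfT_{\sstar}$ and its independence of $T$ are immediate from the explicit expressions (\ref{sfT1_definition}) and (\ref{sfT_2_definition}), which involve only $c$, $a$, $R$, $\omega$ and $\alpha_{\sstar}$.

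The only mildly non-trivial step is the positivity (\ref{positivity_bis}). For $t \geq t^+$ the inequality $\CE(\bV_{c,T}(t)) \geq 0$ is exactly the opening claim of the proof of Proposition \ref{PROPOSITION_comparison_bis_+}. For $t \in [t_2^-,t^+]$ I would invoke (\ref{alpha_starstar}) from the same proof, which gives $\CE(\bV_{c,T}(t)) \geq \alpha_{\sstar}>0$. If a self-contained argument were preferred, one could alternatively re-derive it on the spot: if $\CE(\bV_{c,T}(\tilde t))=\alpha<0$ for some $\tilde t \geq t_2^-$, then since $\CE^{\alpha,+}=\emptyset$ for $\alpha<0$ we would have $\bV_{c,T}(\tilde t) \in \CE^{\alpha,-}$, and also $\bV_{c,T}(-T) \in \CE^{\alpha_-}\subset \CE^{\alpha,-}$, while $\CE(\bV_{c,T}(t_2^-))=\alpha_0>\alpha$, producing a contradiction with the minimality of $\bV_{c,T}$ through Lemma \ref{LEMMA_comparison_bis_1}. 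Either route closes the proof.

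Since the corollary is purely an assembly statement, I do not expect any genuine obstacle; the only point that requires a little care is making sure that (\ref{positivity_bis}) is read off the intermediate step (\ref{alpha_starstar}) in Proposition \ref{PROPOSITION_comparison_bis_+} rather than from its final conclusion (which only asserts the bound for $t \geq t^+$).
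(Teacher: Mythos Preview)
Your proposal is correct and matches the paper's approach: the paper presents this corollary explicitly as a summary of Propositions \ref{PROPOSITION_comparison_bis_1} and \ref{PROPOSITION_comparison_bis_+} and gives no separate proof, so your bookkeeping assembly is exactly what is intended. The one point you leave slightly implicit is the \emph{strict} inequality $t_1^-<t_2^-$; this follows because Proposition \ref{PROPOSITION_comparison_bis_1} gives $\CE(\bV_{c,T}(t_2^-))=\alpha_0>\alpha_-$, while continuity of $\CE(\bV_{c,T}(\cdot))$ on $(-T,T)$ forces $\CE(\bV_{c,T}(t_1^-))\leq\alpha_-$ from the definition of $t_1^-$.
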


Finally, we have the following property on constrained solutions:
\begin{corollary}\label{COROLLARY_bcT_solution}
Assume that \ref{hyp_compactness}, \ref{hyp_projections_inverse}, \ref{hyp_projections_H}, \ref{hyp_regularity} and \ref{hyp_levelsets_bis} hold. Let $c > 0$ and $T \geq 1$. Consider $\bV_{c,T} \in Y_T$ a minimizer of $\bE_c$ in $Y_T$, which exists by Lemma \ref{LEMMA_bcT}. Let $t_1^-:=t_1^-(\bV_{c,T})$ as in \eqref{comparison_t1} and $t^+:= t^+(\bV_{c,T},\eps_0^+)$ as in \eqref{comparison_t+} and $t_0^- \leq t_1^-$, also given by Corollary \ref{COROLLARY_comparison_bis}. Then, there exists $\delta_{Y,c,T}>0$ such that the set
\begin{equation}\label{SYcT}
S_{Y,c,T}:=(-\infty,t_0^-+\delta_{Y,c,T}) \cup (-T,T) \cup (t^+-\delta_{Y,c,T},+\infty)
\end{equation}
is such that $\bV_{c,T} \in \CA(S_{Y,c,T})$ (see \eqref{CA_def}) and
\begin{equation}\label{corollary_bcT_solution_equation}
\bV_{c,T}''-D_\scrL\CE(\bV_{c,T})=-c\bV_{c,T}' \mbox{ in } S_{Y,c,T}.
\end{equation}
\end{corollary}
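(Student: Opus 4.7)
The plan is to deduce everything from Lemma \ref{LEMMA_bcT_solution}: its first part already gives the $\CA$-regularity and the equation on the central interval $(-T,T)$ of $S_{Y,c,T}$, so I only need to choose $\delta_{Y,c,T}>0$ small enough that the localized hypotheses of Lemma \ref{LEMMA_bcT_solution} are met at every point of the two outer components that lies outside $(-T,T)$.

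For the right component $(t^+ -\delta_{Y,c,T},+\infty)$, I would argue as follows. By (\ref{corollary_comparison_bis+}), the strict distance bound $\dist_{\scrL}(\bV_{c,T}(t),\scrF^+)<r_0^+/2$ holds on the closed half-line $[t^+,+\infty)$, and in particular at $t=t^+$. Since $\bV_{c,T}\in H^1_{\loc}(\R,\scrL)$ is $\scrL$-continuous, the map $t\mapsto\dist_{\scrL}(\bV_{c,T}(t),\scrF^+)$ is continuous, so the strict inequality propagates to a left-neighborhood $(t^+ -\delta_+,t^+]$ for some $\delta_+>0$. For each $t\ge T$ in $(t^+ -\delta_+,+\infty)$ the hypothesis (\ref{bcT_no_constrain_+}) is then satisfied, and the second part of Lemma \ref{LEMMA_bcT_solution} produces a neighborhood where $\bV_{c,T}$ is $\CA$ and solves the equation; points of that interval lying in $(-T,T)$ are handled by the first part of the same lemma.

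For the left component $(-\infty,t_0^- +\delta_{Y,c,T})$ the matter is subtler, because Lemma \ref{LEMMA_bcT_solution} requires a uniform energy gap $\CE(\bV_{c,T}(s))\le\alpha_- -\eps$ on an entire neighborhood of $t$, whereas (\ref{corollary_comparison_bis_t0-}) only asserts the pointwise strict inequality $\CE(\bV_{c,T}(t))<\alpha_-$. The crucial additional input, also given by (\ref{corollary_comparison_bis_t0-}), is monotonicity of $f(t):=\CE(\bV_{c,T}(t))$ on $(-\infty,t_0^- +\delta_0^-]$. Consequently, on any closed subinterval $[a,b]\subset(-\infty,t_0^- +\delta_0^-]$ the supremum of $f$ is attained at one of the endpoints, and is therefore strictly less than $\alpha_-$. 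Thus for every $t\le -T$ lying in $(-\infty,t_0^- +\delta_0^-/2)$ I can pick $\delta^-_Y(t)>0$ small enough that $[t-\delta^-_Y(t),t+\delta^-_Y(t)]\subset(-\infty,t_0^- +\delta_0^-]$, set $\eps^-_Y(t):=\alpha_- -\sup_{s\in[t-\delta^-_Y(t),t+\delta^-_Y(t)]}f(s)>0$, and apply the third part of Lemma \ref{LEMMA_bcT_solution}; points of this interval inside $(-T,T)$ are once again covered by the first part.

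Taking $\delta_{Y,c,T}:=\min\{\delta_+,\delta_0^-/2\}>0$ and combining the three cases yields $\bV_{c,T}\in\CA(S_{Y,c,T})$ together with (\ref{corollary_bcT_solution_equation}). The only genuinely non-trivial step is the passage from the pointwise strict inequality $\CE(\bV_{c,T}(t))<\alpha_-$ to the uniform neighborhood gap demanded by Lemma \ref{LEMMA_bcT_solution}; that step rests entirely on the monotonicity provided by (\ref{corollary_comparison_bis_t0-}), which itself traces back to Lemmas \ref{LEMMA_comparison_bis_1} and \ref{LEMMA_comparison_bis_2}, and would be unavailable from the pointwise inequality alone.
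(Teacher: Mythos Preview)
Your proposal is correct and follows essentially the same approach as the paper's proof: both use the first part of Lemma \ref{LEMMA_bcT_solution} for $(-T,T)$, the second part combined with (\ref{corollary_comparison_bis+}) (extended slightly to the left by $\scrL$-continuity) for the right component, and the third part combined with (\ref{corollary_comparison_bis_t0-}) for the left component, then take $\delta_{Y,c,T}$ as the minimum of the two margins. Your write-up is in fact more explicit than the paper's about why the uniform gap $\CE(\bV_{c,T}(s))\le\alpha_--\eps$ on closed subintervals follows from monotonicity rather than from the pointwise strict inequality alone; the paper states this step tersely (and with a small typo), but the underlying argument is the same.
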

\begin{proof}
Using \eqref{corollary_comparison_bis_t0-} in Corollary \ref{COROLLARY_comparison_bis}, we have that for each $\tilde{T}>0$ it holds
\begin{equation}
\forall t \leq [t_0^--\tilde{T},t_0^-+\delta_0^-], \hspace{2mm} \CE(\bV_{c,T}(t)) \leq \tilde{\alpha} < \alpha_-,
\end{equation}
so that \eqref{bcT_no_constrain_-} in Lemma \ref{LEMMA_bcT_solution} is satisfied for any closed interval contained in  $(-\infty,t_0^-]$. As a consequence, \eqref{bcT_equation_delta-} follows in such interval. That means that for $t_0^- \leq t_1^-$ and $\delta_0^->0$ as in Corollary \ref{COROLLARY_comparison_bis}, we have that $\bV_{c,T} \in \CA((-\infty,t_0^-+\delta_0^-))$ and
\begin{equation}
\bV_{c,T}''-D_\scrL\CE(\bV_{c,T})=-c\bV_{c,T}'' \mbox{ in } (-\infty,t_0^-+\delta_0^-) .
\end{equation}
Using again Lemma \ref{LEMMA_bcT_solution} in combination with \eqref{corollary_comparison_bis+} in Corollary \ref{COROLLARY_comparison_bis}, we obtain for some $\delta_0^+>0$ that
\begin{equation}
\bV_{c,T} \in \CA((-T,T) \cup (t^+-\delta_{0}^+,+\infty))
\end{equation}
and
\begin{equation}
\bV_{c,T}''-D_\scrL\CE(\bV_{c,T})=-c\bV_{c,T}'' \mbox{ in }  (-T,T) \cup  (t^+-\delta_{0}^+,+\infty) .
\end{equation}
As a consequence, \eqref{corollary_bcT_solution_equation} follows by taking $\delta_{Y,c,T}:=\min\{\delta_0^-,\delta_0^+\}>0$.
\end{proof}
\subsection{Existence of an unconstrained solution}
We can now establish the existence of an unconstrained solution for the proper speed. Let
\begin{equation}\label{set_CD}
\CD:= \{ c>0: \exists T \geq 1 \mbox{ and } U \in Y_T \mbox{ such that } \bE_c(U) < 0\}.
\end{equation}
We have:
\begin{lemma}\label{LEMMA_set_CD}
Assume that \ref{hyp_compactness}, \ref{hyp_projections_inverse}, \ref{hyp_projections_H} and \ref{hyp_levelsets_bis} hold. Let $\CD$ be the set defined in \eqref{set_CD}.  Then, $\CD$ is open and non-empty and bounded such that
\begin{equation}\label{sup_D_bound}
\sup \CD \leq \frac{\sqrt{-2a}}{d_{\alpha_0}},
\end{equation}
where $d_{\alpha_0}>0$ is as in \eqref{d_alpha_0}
\end{lemma}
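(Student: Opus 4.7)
The plan is to prove separately that $\CD$ is open, non-empty, and satisfies $\sup\CD \leq \sqrt{-2a}/d_{\alpha_0}$, where $d_{\alpha_0}>0$ by \ref{hyp_levelsets_bis} and Lemma \ref{LEMMA_compactness}. For openness, if $c_0 \in \CD$ with witness $(T, U)$ satisfying $\bE_{c_0}(U) < 0$, the mild subtlety is that Lemma \ref{LEMMA_ATU} only provides continuity of $c \mapsto \bE_c(U)$ on the possibly bounded set $A_{T,U}$. I would handle this by truncation: for $M \geq T$ large, let $\tilde U_M \in Y_T$ equal $U$ on $(-\infty, M]$, interpolate linearly between $U(M)$ and a fixed $\bv^+ \in \scrF^+$ on $[M, M+1]$, and equal $\bv^+$ on $[M+1, +\infty)$. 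Then $A_{T, \tilde U_M} = (0, +\infty)$ (the energy density is bounded on the interpolating segment and vanishes past $M+1$); moreover the tail $\int_M^{+\infty} (\|U'\|_\scrL^2/2 + \CE(U)) e^{c_0 t} dt$ of the finite integral $\bE_{c_0}(U)$ vanishes as $M \to +\infty$, so $\bE_{c_0}(\tilde U_M) < 0$ for large $M$. Lemma \ref{LEMMA_ATU} then supplies an open neighborhood of $c_0$ inside $\CD$.

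For non-emptiness, I fix $\bv^- \in \scrF^-$, $\bv^+ \in \scrF^+$ and, for $L \geq 0$, define $U_L \in Y_{L+1}$ by $U_L \equiv \bv^-$ on $(-\infty, L]$, $U_L(t) = (L+1-t)\bv^- + (t-L)\bv^+$ on $[L, L+1]$, and $U_L \equiv \bv^+$ on $[L+1, +\infty)$. Since $\bv^\pm \in \scrH$ and $\scrH$ is a subspace by \ref{hyp_smaller_space}, the connecting segment lies in $\scrH$ and $\CE$ is continuous, hence bounded, on it. Setting $K := \|\bv^+ - \bv^-\|_\scrL^2/2 + \max_{s \in [0,1]} \CE(s\bv^+ + (1-s)\bv^-) < +\infty$ and using $\CE(\bv^-) = a$, $\CE(\bv^+) = 0$, direct computation gives $\bE_c(U_L) \leq (e^{cL}/c)\bigl(a + K(e^c - 1)\bigr)$, which is negative for $c$ small enough, so all such $c$ lie in $\CD$.

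The substantive step is the upper bound. I fix $c \in \CD$, pick $T \geq 1$ with $\bb_{c,T} < 0$, and let $\bV_{c,T} \in Y_T$ be the minimizer supplied by Lemma \ref{LEMMA_bcT}, which satisfies $\bE_c(\bV_{c,T}) < 0$. Set $t_2^- := t_2^-(\bV_{c,T})$ and $t^+ := t^+(\bV_{c,T}, \eps_0^+)$. By Corollary \ref{COROLLARY_comparison_bis}, $t_2^- < t^+$, $\bV_{c,T}(t_2^-) \in \CE^{\alpha_0,-}$, $\bV_{c,T}(t^+) \in \scrF^+_{r_0^+/2}$, and $\CE(\bV_{c,T}(t)) \geq 0$ for $t \geq t_2^-$. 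Combining with $\CE \geq a$ on $(-\infty, t_2^-]$,
$$
0 > \bE_c(\bV_{c,T}) \geq \int_{t_2^-}^{t^+} \frac{\|\bV_{c,T}'(t)\|_\scrL^2}{2} e^{ct} dt + \frac{a}{c} e^{ct_2^-},
$$
so $\int_{t_2^-}^{t^+} \|\bV_{c,T}'\|_\scrL^2 e^{ct} dt < (-2a/c)\, e^{ct_2^-}$. Since $d_{\alpha_0} \leq \|\bV_{c,T}(t^+) - \bV_{c,T}(t_2^-)\|_\scrL \leq \int_{t_2^-}^{t^+} \|\bV_{c,T}'\|_\scrL dt$, writing the integrand as $\|\bV_{c,T}'\|_\scrL e^{ct/2} \cdot e^{-ct/2}$ and applying Cauchy--Schwarz gives
$$
d_{\alpha_0}^2 \leq \int_{t_2^-}^{t^+} \|\bV_{c,T}'\|_\scrL^2 e^{ct} dt \cdot \int_{t_2^-}^{t^+} e^{-ct} dt < \frac{-2a}{c} e^{ct_2^-} \cdot \frac{e^{-ct_2^-}}{c} = \frac{-2a}{c^2},
$$
hence $c < \sqrt{-2a}/d_{\alpha_0}$, yielding the stated supremum bound. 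The main obstacle is exactly this estimate: one must exploit the sign information $\CE \geq 0$ past $t_2^-$ from Corollary \ref{COROLLARY_comparison_bis} in order to localize the calculation to $[t_2^-, t^+]$, and then split Cauchy--Schwarz with the weight $e^{ct/2}$ so that the factors $e^{\pm ct_2^-}$ cancel exactly, producing a $c$-free estimate.
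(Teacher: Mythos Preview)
Your proof is correct and follows essentially the same route as the paper: truncation to obtain openness, a translated affine competitor for non-emptiness, and for the bound the combination of Corollary \ref{COROLLARY_comparison_bis} (to get $\CE(\bV_{c,T}(t))\geq 0$ on $[t_2^-,+\infty)$ and $\bV_{c,T}(t_2^-)\in\CE^{\alpha_0,-}$) with a weighted Cauchy--Schwarz on $[t_2^-,t^+]$ so that the exponential factors cancel. One small point in your openness step: the fixed $\bv^+\in\scrF^+$ should be taken to be the limit $\bv^+(U)$ supplied by Lemma \ref{LEMMA_limit+}, both so that the interpolating segment stays in $\scrF^+_{r_0^+/2}$ (hence $\tilde U_M\in Y_T$) and so that its energy contribution vanishes as $M\to+\infty$; the paper makes exactly this choice.
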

\begin{proof}
Let $\Psi$ be the function defined in \eqref{Psi}. Since \ref{hyp_levelsets_bis} holds we have by Lemma \ref{LEMMA_PSI_YT} that $\Psi \in Y_T$. Therefore, the argument from \cite{oliver-bonafoux-tw} applies here to show that $\CD \not = \emptyset$. Subsequently, let $c \in \CD$. By definition, there exists $T \geq 1$ such that $\bE_c(\bV_{c,T})<0$, where $\bV_{c,T} \in Y_T$ is a minimizer of $\bE_c$ in $Y_T$, which exists by Lemma \ref{LEMMA_bcT}. Let $\bv_{c,T}^+$ be such that $\lVert \bV_{c,T}(t)-\bv_{c,T}^+ \rVert_{\scrH} \to 0$ as $t \to +\infty$, which occurs due to Lemma \ref{LEMMA_limit+}. Define for $t \geq T$
\begin{equation}
\bV_{c,T}^t(s):= \begin{cases}
\bV_{c,T}(s) &\mbox{ if } s \leq t,\\
(1+t-s)\bV_{c,T}(t)+(s-t)\bv_{c,T}^+ &\mbox{ if } t \leq s \leq t+1,\\
\bv_{c,T}^+ &\mbox{ if } t+1 \leq s.
\end{cases}
\end{equation}
which belongs to $Y_T$. Arguing as in \cite{oliver-bonafoux-tw}, we can show that for $t \geq T$ large enough and some $\delta>0$ we have for all $\tilde{c} \in (c-\delta,c+\delta)$ that $\bE_{\tilde{c}}(\bV_{c,T}^t(s))<0$. This shows that $\CD$ is open. In order to establish the bound \eqref{sup_D_bound}, we use Corollary \ref{COROLLARY_comparison_bis}. Let $c \in \CD$, there exists then $T \geq 1$ such that $\bE_c(\bV_{c,T})<0$, with $\bV_{c,T}$ a minimizer of $\bE_c$ in $Y_T$. Let $t_2^-:=t_2^-(\bV_{c,T})$ as in \eqref{comparison_t2} and $t^+:=t^+(\bV_{c,T},\eps_0^+)$ as in \eqref{comparison_t+}. Corollary \ref{COROLLARY_comparison_bis} implies that $\bV_{c,T}(t_2^-) \in \CE^{\alpha_0-}$ and $\bV_{c,T}(t^+) \in \scrF^+_{r_0^+/2}$. Therefore, the definition of $d_{\alpha_0}$ in \eqref{d_alpha_0} implies
\begin{equation}
\left( d_{\alpha_0} \right)^2\leq \lVert \bV_{c,T}(t^+)-\bV_{c,T}(t_2^-) \rVert_{\scrL}^2
\end{equation}
so that
\begin{equation}
\left( d_{\alpha_0} \right)^2\leq 2 \int_{\R} \frac{\lVert \bV_{c,T}'(t) \rVert_{\scrL}^2}{2} e^{ct}dt \left( \frac{e^{-ct_2^-}-e^{-ct^+}}{c} \right).
\end{equation}
Using \eqref{positivity_bis} in Corollary \ref{COROLLARY_comparison_bis}, the inequality above becomes
\begin{equation}
\left( d_{\alpha_0} \right)^2\leq 2\left( \bE_c(\bV_{c,T})-\frac{e^{ct_2^-}}{c} \right) \left( \frac{e^{-ct_2^-}-e^{-ct^+}}{c} \right)
\end{equation}
which, using that $ \bE_c(\bV_{c,T})<0$ and $t^+ > t_2^-$, gives \eqref{sup_D_bound} and concludes the proof.
\end{proof}
We can now establish the existence of an unconstrained solution:
\begin{proposition}\label{PROPOSITION_existence_bis}
Assume that \ref{hyp_compactness}, \ref{hyp_projections_inverse}, \ref{hyp_projections_H}, \ref{hyp_regularity} and \ref{hyp_levelsets_bis} hold.  Let $\overline{c} \in \partial(\CD) \cap (0,+\infty)$, where $\partial(\CD)$ stands for the boundary of the set $\CD$ defined in \eqref{set_CD}. Then, there exists $\overline{T} \geq 1$ such that $\bb_{\overline{c},\overline{T}}=0$ ($\bb_{\overline{c},\overline{T}}$ as in \eqref{bcT}) and $\overline{\bV} \in Y_{\overline{T}}$ an associated minimizer of $\bE_{\overline{c}}$ which does not saturate the constraints, i. e.,
\begin{equation}\label{unconstrained+_bis}
\forall t \geq \overline{T}, \hspace{2mm} \dist_{\scrL}(\overline{\bV}(t),\scrF^+) < \frac{r_0^+}{2}
\end{equation}
and
\begin{equation}\label{unconstrained-_bis}
\forall t \leq -\overline{T}, \hspace{2mm} \CE(\overline{\bV}(t)) < \alpha_-.
\end{equation}
In particular, the pair $(\overline{c},\overline{\bV})$ solves \eqref{abstract_equation}.
\end{proposition}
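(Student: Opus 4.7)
The plan is to realize $\overline{\bV}$ as a weak limit of suitably translated constrained minimizers $\bV_{c_n,T_n}$ associated with speeds $c_n \in \CD$ converging to $\overline{c}$, and to use openness of $\CD$ (Lemma \ref{LEMMA_set_CD}) to force the limiting energy to vanish. Since $\CD$ is open, one picks a sequence $(c_n)_n \subset \CD$ with $c_n \to \overline{c}$. For each $n$, select $T_n \geq 1$ with $\bb_{c_n,T_n} < 0$ (possible by the definition of $\CD$) and let $\bV_n := \bV_{c_n,T_n} \in Y_{T_n}$ be the associated minimizer given by Lemma \ref{LEMMA_bcT}. Define $\tau_n := t^+(\bV_n,\eps_0^+)$ (see (\ref{comparison_t+})) and set $\tilde{\bV}_n(t) := \bV_n(t+\tau_n)$, so that $t^+(\tilde{\bV}_n,\eps_0^+) = 0$. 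By Corollary \ref{COROLLARY_comparison_bis} and continuity of $c \mapsto \sfT_{\sstar}(c)$ at $\overline{c} > 0$, the quantity $\Theta := \sup_n \sfT_{\sstar}(c_n)$ is finite, so $t_1^-(\tilde{\bV}_n) \in [-\Theta,0]$ uniformly in $n$. Fixing $T_0 > \Theta + 1$ and combining (\ref{corollary_comparison_bis+}) with (\ref{corollary_comparison_bis_t1-}) then yields $\tilde{\bV}_n \in Y_{T_0}$ for every $n$.

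The translation identity $\bE_{c_n}(\tilde{\bV}_n) = e^{-c_n \tau_n} \bE_{c_n}(\bV_n) < 0$ gives $\sup_n \bE_{c_n}(\tilde{\bV}_n) \leq 0 < +\infty$, so Lemma \ref{LEMMA_LSC_YT} (applied with $T = T_0$, invoking its last part since $\tilde{\bV}_n \in Y_{T_0}$) produces, up to extraction, a limit $\overline{\bV} \in Y_{T_0}$ with $\bE_{\overline{c}}(\overline{\bV}) \leq \liminf_n \bE_{c_n}(\tilde{\bV}_n) \leq 0$. On the other hand, openness of $\CD$ together with $\overline{c} \in \partial \CD$ forces $\overline{c} \notin \CD$, so $\bE_{\overline{c}}(U) \geq 0$ for every $U$ in every $Y_T$; applied to $\overline{\bV}$ this gives $\bE_{\overline{c}}(\overline{\bV}) = 0$ and $\bb_{\overline{c},T_0} = 0$, attained by $\overline{\bV}$.

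Since $\overline{\bV}$ is a minimizer of $\bE_{\overline{c}}$ in $Y_{T_0}$, Corollary \ref{COROLLARY_comparison_bis} applied to it supplies $t^+_\infty := t^+(\overline{\bV},\eps_0^+) < +\infty$ as well as $t_0^- \leq t_1^-(\overline{\bV})$ and $\delta_0^- > 0$ such that $\dist_{\scrL}(\overline{\bV}(t),\scrF^+) < r_0^+/2$ for $t \geq t^+_\infty$ and $\CE(\overline{\bV}(t)) < \alpha_-$ for $t \leq t_0^- + \delta_0^-$. Setting $\overline{T} := \max\{T_0,\, t^+_\infty,\, -t_0^- - \delta_0^-,\, 1\}$, one obtains $\overline{\bV} \in Y_{T_0} \subset Y_{\overline{T}}$, both (\ref{unconstrained+_bis}) and (\ref{unconstrained-_bis}), and $\bb_{\overline{c},\overline{T}} = 0$; Corollary \ref{COROLLARY_bcT_solution} then yields (\ref{abstract_equation}) on $S_{Y,\overline{c},\overline{T}}$, and enlarging $\overline{T}$ further if necessary makes the three intervals in (\ref{SYcT}) cover $\R$, so the ODE holds throughout $\R$. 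The main technical obstacle is the first step: assembling the translates $(\tilde{\bV}_n)_n$ into a single space $Y_{T_0}$ with bounded energies from above. Without the uniform transition bound $\sfT_{\sstar}(c_n) \leq \Theta$ of Corollary \ref{COROLLARY_comparison_bis}, the minimizers could drift to infinity or have their transition width blow up as $c_n \to \overline{c}$, and no weak limit could be extracted; once this uniform containment is secured, the rest is a standard weak-compactness and lower-semicontinuity argument upgraded by openness of $\CD$.
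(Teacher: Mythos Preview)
Your proposal is correct and follows essentially the same route as the paper: approximate $\overline{c}$ by $c_n \in \CD$, translate the minimizers $\bV_{c_n,T_n}$ by $t^+(\bV_{c_n,T_n},\eps_0^+)$ so that the uniform transition bound $\sfT_\sstar(c_n)$ from Corollary \ref{COROLLARY_comparison_bis} places them all in a fixed $Y_{T_0}$, pass to the limit via Lemma \ref{LEMMA_LSC_YT}, and use $\overline{c}\notin\CD$ to pin the energy at zero before reapplying Corollary \ref{COROLLARY_comparison_bis} and Corollary \ref{COROLLARY_bcT_solution} to the limit. The only cosmetic difference is that the paper fixes $\overline{T}=\max\{1,t^+_\sstar,-t_0^-\}$ directly so that $S_{Y,\overline{c},\overline{T}}=\R$, whereas you phrase this as ``enlarging $\overline{T}$ further if necessary''; since $Y_T\subset Y_{T'}$ for $T\leq T'$ and $\overline{\bV}$ remains a minimizer with the same $t_0^-,t^+$ after enlargement, this is equivalent.
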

\begin{remark}
Notice that Lemma \ref{LEMMA_set_CD} implies that (under the necessary assumptions) the set $\CD$ is bounded, meaning that the set $\partial(\CD) \cap (0,+\infty)$ is not empty. Such a fact, in combination with Proposition \ref{PROPOSITION_existence_bis}, shows the existence of the unconstrained solution.
\end{remark}
\begin{proof}[Proof of Proposition \ref{PROPOSITION_existence_bis}]
We essentially mimic the proof from \cite{oliver-bonafoux-tw} with the obvious modifications.  Lemma \ref{LEMMA_set_CD} implies that $\partial(\CD) \subset \R \setminus \CD$, so that $\overline{c} \not \in \CD$. The definition of $\CD$ in \eqref{set_CD} implies then that
\begin{equation}\label{c^sstar_nonnegative}
\forall T \geq 1, \hspace{2mm} \bb_{\overline{c},T} \geq 0.
\end{equation}
Let $(c_n)_{n \in \N}$ be a sequence in $\CD$ such that $c_n \to \overline{c}$ as $n \to \infty$. By definition, for each $n \in \N$ there exists $T_n \geq 1$ such that $\bE_{c_n}(\bV_{c_n,T_n})<0$ where, for each $n \in \N$, $\bV_{c_n,T_n}$ is a minimizer of $\bE_{c_n}$ in $Y_{T_n}$. For each $n \in \N$, set $t_{1,n}^-:=t_1^-(\bV_{c_n,T_n})$ as in \eqref{comparison_t1} and $t_n^+:=t^+(\bV_{c_n,T_n},\eps_0^+)$ as in \eqref{comparison_t+}. Using \eqref{sfT_starstar} in Corollary \ref{COROLLARY_comparison_bis} we have
\begin{equation}
\forall n \in \N, \hspace{2mm} 0 < t_n^+ - t_{1,n}^+ \leq \sfT_{\sstar}(c_n)
\end{equation}
with
\begin{equation}
c \in (0,+\infty) \to \sfT_{\sstar}(c) \in (0,+\infty)
\end{equation}
a continuous function. Since the sequence $(c_n)_{n \in \N}$ is bounded, we have that
\begin{equation}
\sfT_\sstar:= \max\left\{1,\sup_{n \in \N} \sfT_\sstar(c_n)\right\}<+\infty
\end{equation}
and
\begin{equation}\label{unconstrainted_sfT_bis}
\forall n \in \N, \hspace{2mm}0 < t^+_n-t^-_{1,n} \leq \sfT_\sstar,
\end{equation}
so that we have a bound on $(t^+_n-t^-_{1,n})_{n \in \N}$. Moreover, \eqref{corollary_comparison_bis+} and \eqref{corollary_comparison_bis_t1-} in Corollary \ref{COROLLARY_comparison_bis} imply
\begin{equation}\label{unconstrained_t+_bis}
\forall n \in \N, \forall t \geq t^+_{1,n}, \hspace{2mm} \dist_{\scrL}(\bV_{c_n,T_n}(t),\scrF^+) < \frac{r_0^+}{2}
\end{equation}
and
\begin{equation}\label{unconstrained_t-_bis}
\forall n \in \N, \forall t \leq t^-_n, \hspace{2mm} \CE(\bV_{c_n,T_n}(t)) \leq \alpha_-.
\end{equation}
For each $n \in \N$, define the function $\bV_{c_n,T_n}^{t_n^+}:=\bV_{c_n,T_n}(\cdot + t_n^+)$. Then, \eqref{unconstrainted_sfT_bis} implies that \eqref{unconstrained_t+_bis} and \eqref{unconstrained_t-_bis} write as
\begin{equation}\label{translate_unconstrained_+_bis}
\forall n \in \N, \forall t \geq 0, \hspace{2mm} \dist_{\scrL}(\bV_{c_n,T_n}^{t_n^+}(t),\scrF^-) < \frac{r_0^-}{2}.
\end{equation}
and
\begin{equation}\label{translate_unconstrained_t-_bis}
\forall n \in \N, \forall t \leq -\sfT_\sstar, \hspace{2mm} \CE(\bV_{c_n,T_n}^{t_n^+}(t)) \leq \alpha_-
\end{equation}
so that for all $n \in \N$ we have $\bV_{c_n,T_n}^{t_n^+} \in Y_{\sfT_\sstar}$. Moreover, a computation shows
\begin{equation}
\forall n \in \N, \hspace{2mm} \bE_{c_n}(\bV_{c_n,T_n}^{t_n^+})=e^{-c_nt_n^+}\bE_{c_n}(\bV_{c_n,T_n})<0.
\end{equation}
Therefore, if we apply Lemma \ref{LEMMA_LSC_YT} with sequence of speeds $(c_n)_{n \in \N}$ and the sequence $(\bV_{c_n,T_n}^{t_n^+})_{n \in \N}$ in $Y_{\sfT_\sstar}$, we obtain $\overline{\bV} \in Y_{\sfT_\sstar}$ such that
\begin{equation}
\bE_{\overline{c}}(\overline{\bV}) \leq \liminf_{n \to \infty } \bE_{c_n}(\bV_{c_n,T_n}^{t_n^+}) \leq 0,
\end{equation}
which in combination with \eqref{c^sstar_nonnegative} implies that $\bb_{\overline{c},\sfT_\sstar}=0$. Therefore, we have $\bE_{\overline{c}}(\overline{\bV})=0$, so that $\overline{\bV}$ is a minimizer of $\bE_{\overline{c}}$ in $Y_{\sfT_\sstar}$. Set $t^-_{1,\sstar}:= t_1^-(\overline{\bV})$ as in \eqref{comparison_t1} and $t^+_{\sstar}:=t^+(\overline{\bV},\eps_0^\pm)$ as in \eqref{comparison_t+}. Invoking \eqref{corollary_comparison_bis+} and \eqref{corollary_comparison_bis_t0-} in Corollary \ref{COROLLARY_comparison_bis}, we obtain as before that for some $t_0^- \leq t_1^-$ and $\delta_0^->0$
\begin{equation}\label{t_star+_bis}
\forall t \geq t^+_\sstar, \hspace{2mm} \dist_{\scrL}(\overline{\bV}(t),\scrF^+) < \frac{r_0^+}{2},
\end{equation} 
\begin{equation}\label{t_star0_bis}
\begin{cases}
\CE(\overline{\bV}(\cdot)) \mbox{ is monotone in } (-\infty,t_0^-+\delta_0-] \mbox{ and }\\
\forall t \leq t_0^-+\delta_0^-, \hspace{2mm} \CE(\overline{\bV}(t)) < \alpha_-.
\end{cases}
\end{equation}
If we set $\overline{T}=\max\{1,t^+_\sstar,-t_0^-\}$, then \eqref{t_star+_bis} and \eqref{t_star0_bis} imply that $\overline{\bV} \in Y_{\overline{T}}$ and that \eqref{unconstrained+_bis}, \eqref{unconstrained-_bis} hold. Moreover, we have that $\bE_{\overline{c}}(\overline{\bV})=0$, so that $\overline{\bV}$ is a minimizer of $\bE_{\overline{c}}$ in $Y_{\overline{T}}$ due to \eqref{c^sstar_nonnegative}. We will now apply Corollary \ref{COROLLARY_bcT_solution} with $\overline{c}$, $\overline{T}$ as constants and $\overline{\bV}$ as constrained minimizer. By \eqref{t_star+_bis} and \eqref{t_star0_bis}, we have that $\overline{\bV} \in \CA(S_{Y,\overline{c},\overline{T}})$ and
\begin{equation}
\overline{\bV}''-D_\scrL\CE(\overline{\bV})=-\overline{c}\overline{\bV}' \mbox{ in } S_{Y,\overline{c},\overline{T}},
\end{equation}
with
\begin{equation}
S_{Y,\overline{c},\overline{T}}:=(-\infty,t_0^-+\delta_{Y,\overline{c},\overline{T}}) \cup (-\overline{T},\overline{T}) \cup (t_\sstar^+-\delta_{Y,\overline{c},\overline{T}},+\infty),
\end{equation}
for some $\delta_{Y,\overline{c},\overline{T}}>0$. The choice of $\overline{T}$ implies that $S_{Y,\overline{c},\overline{T}}=\R$, which concludes the proof.
\end{proof}
\subsection{Uniqueness of the speed}
We have the following:
\begin{proposition}[\cite{oliver-bonafoux-tw}]\label{PROPOSITION-uniqueness}
Assume that \ref{hyp_regularity} holds. Let $X$ be the set defined in \eqref{X}. Let $(c_1,c_2) \in (0,+\infty)^2$ be such that there exist $\bU_1$ and $\bU_2$ in $X\cap \CA(\R)$ such that $(c_1,\bU_1)$ and $(c_2,\bU_2)$ solve \eqref{abstract_equation} and for each $i \in \{1,2\}$, $\bE_{c_i}(\bU_i) <+\infty$. Assume moreover that
\begin{equation}\label{uniqueness_condition}
\forall i \in \{1,2\},\forall j \in \{1,2\} \setminus \{i\}, \hspace{2mm} \bE_{c_i}(\bU_j) \geq 0.
\end{equation}
Then, we have $c_1=c_2$.
\end{proposition}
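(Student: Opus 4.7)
The plan is to derive, for any solution $(c_i, \bU_i)$ of (\ref{abstract_equation}) with $\bU_i \in X \cap \CA(\R)$ and $\bE_{c_i}(\bU_i) < +\infty$, a Pohozaev-type identity of the form
\[
\bE_c(\bU_i) = \frac{c - c_i}{c} \int_\R \|\bU_i'(t)\|_\scrL^2 \, e^{ct} \, dt
\]
for every admissible speed $c > 0$. To obtain it, I would multiply the equation $\bU_i'' - D_\scrL \CE(\bU_i) = -c_i \bU_i'$ by $\bU_i'(t) e^{ct}$ in the $\scrL$ inner product. The regularity encoded in $\bU_i \in \CA(\R)$ (see (\ref{CA_def})) together with \ref{hyp_smaller_space} validate the chain rule $\langle D_\scrL \CE(\bU_i), \bU_i' \rangle_\scrL = D\CE(\bU_i)(\bU_i') = \tfrac{d}{dt}\CE(\bU_i)$, so the equation becomes the exact differential identity
\[
\tfrac{d}{dt}\!\left\{\bigl[\tfrac{1}{2}\|\bU_i'\|_\scrL^2 - \CE(\bU_i)\bigr] e^{ct}\right\} = \bigl(\tfrac{c}{2}-c_i\bigr)\|\bU_i'\|_\scrL^2 e^{ct} - c \, \CE(\bU_i) e^{ct}.
\]
Integrating over $\R$ and showing that the boundary terms at $\pm \infty$ vanish (at $+\infty$ through convergence $\bU_i(t) \to \bv^+(\bU_i) \in \scrF^+$ combined with the decay granted by (\ref{elliptic_conclusion2}) in \ref{hyp_regularity}; at $-\infty$ through the $\scrL$-boundedness of $\bU_i$ inside $\scrF^-_{r_0^-/2}$ and the exponential factor $e^{ct}$) then produces the identity.

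With the identity in hand, the uniqueness conclusion is immediate. I would argue by contradiction, supposing without loss of generality that $c_1 < c_2$. Since $\bU_2 \in X$, there exists $T \geq 1$ with $\bU_2 \in X_T$, and $\bE_{c_2}(\bU_2) < +\infty$ together with Lemma \ref{LEMMA_ATU} grants $\bE_{c_1}(\bU_2) < +\infty$; thus the identity applies at $(c, \bU_i) = (c_1, \bU_2)$ and yields
\[
\bE_{c_1}(\bU_2) = \frac{c_1 - c_2}{c_1}\int_\R \|\bU_2'(t)\|_\scrL^2 \, e^{c_1 t} \, dt \leq 0.
\]
Combined with the cross-hypothesis (\ref{uniqueness_condition}) that $\bE_{c_1}(\bU_2) \geq 0$, this forces $\int_\R \|\bU_2'\|_\scrL^2 e^{c_1 t} \, dt = 0$, so $\bU_2$ would be constant. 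But $\bU_2 \in X$ has limit behaviors at $\pm \infty$ in the disjoint sets $\scrF^-_{r_0^-/2}$ and $\scrF^+_{r_0^+/2}$ (see \ref{hyp_unbalanced}), a contradiction. The symmetric argument, applied instead to $(c_2, \bU_1)$, rules out $c_1 > c_2$, and hence $c_1 = c_2$.

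The hard part will be the rigorous justification of the Pohozaev identity, specifically the vanishing of the boundary terms and the finiteness of all the integrals involved. Doing so requires a careful interplay between the coercivity supplied by \ref{hyp_unbalanced}, the regularity and convergence statements of \ref{hyp_regularity} (most importantly (\ref{elliptic_conclusion2}), which controls $\bU_i$, $\bU_i'$ and $\bU_i''$ near $+\infty$ in the various norms on $\scrL$ and $\scrH$ at play), and Lemma \ref{LEMMA_ATU} for transferring finiteness of $\bE_{c_i}$ to $\bE_{c_j}$ whenever $c_j \leq c_i$. Once these technical points are secured, the purely algebraic step leading to $c_1 = c_2$ is straightforward.
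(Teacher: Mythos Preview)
Your approach is correct and is the standard one: the Pohozaev-type identity you derive is exactly what underlies this result in \cite{alikakos-katzourakis} and in \cite{oliver-bonafoux-tw}, from which the present paper simply imports the proposition without reproving it. The algebra leading from the identity to $c_1=c_2$ via (\ref{uniqueness_condition}) is clean and your use of Lemma~\ref{LEMMA_ATU} to guarantee $\bE_{c_1}(\bU_2)<+\infty$ when $c_1<c_2$ is the right move.

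One point to tighten: you invoke (\ref{elliptic_conclusion2}) from \ref{hyp_regularity} to control the boundary term at $+\infty$, but that conclusion is stated for local minimizers, whereas the $\bU_i$ here are only assumed to be solutions in $\CA(\R)$ with finite energy. The cleaner route, which uses only the stated hypotheses, is to argue along subsequences. Since $\bE_{c_1}(\bU_2)<+\infty$ and $\CE(\bU_2(t))\geq 0$ for $t\geq T$ (respectively $\CE(\bU_2(t))\geq a$ for $t\leq -T$), both nonnegative functions $t\mapsto \lVert \bU_2'(t)\rVert_\scrL^2 e^{c_1 t}$ and $t\mapsto (\CE(\bU_2(t))-\min\{0,a\})e^{c_1 t}$ are integrable near $\pm\infty$, hence have vanishing $\liminf$ there. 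Picking sequences $s_n\to -\infty$, $t_n\to +\infty$ along which both vanish simultaneously, you obtain the identity in the limit without ever needing pointwise exponential decay. This is a minor adjustment; the overall strategy is sound.
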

This allows to show the following:

\begin{corollary}\label{COROLLARY_CD}
Assume that \ref{hyp_compactness}, \ref{hyp_projections_inverse}, \ref{hyp_projections_H}, \ref{hyp_regularity} and \ref{hyp_levelsets_bis} hold. Let
\begin{equation}\label{c(CD)}
c(\CD):= \sup \CD.
\end{equation}
Then, we have $\CD=(0,c(\CD))$.
\end{corollary}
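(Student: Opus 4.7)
The plan is to show that $\partial\CD\cap(0,+\infty)$ reduces to the single point $c(\CD)$, from which the identity $\CD=(0,c(\CD))$ follows immediately by openness of $\CD$. The two main inputs are Proposition~\ref{PROPOSITION_existence_bis}, which produces a globally minimizing solution at every point of $\partial\CD\cap(0,+\infty)$, and Proposition~\ref{PROPOSITION-uniqueness}, which prevents two different speeds from supporting such minimizing solutions simultaneously.

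From Lemma~\ref{LEMMA_set_CD} the set $\CD$ is open, non-empty and bounded in $(0,+\infty)$, so $c(\CD)=\sup \CD$ is a finite positive number. Openness forces $c(\CD)\notin \CD$, whence $c(\CD)\in \partial\CD\cap(0,+\infty)$ and $\CD\subseteq (0,c(\CD))$ trivially. I would then argue by contradiction for the reverse inclusion: assuming the existence of some $c^*\in (0,c(\CD))\setminus \CD$, set $c_1:=\sup\{c>0: (0,c)\subseteq \CD\}$. The presence of $c^*$ gives $c_1\leq c^*<c(\CD)$, and openness of $\CD$ forces $c_1\notin \CD$ (else a whole neighborhood of $c_1$ would lie in $\CD$), hence $c_1\in \partial\CD$. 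To see that $c_1\in (0,+\infty)$, I would exploit the explicit linear interpolation $\Psi$ of Lemma~\ref{LEMMA_PSI_YT} (which belongs to $Y_T$ for every $T\geq 1$): a direct computation gives
\begin{equation*}
\bE_c(\Psi)=\frac{a}{c}e^{-c}+\int_{-1}^{1}\Bigl[\tfrac{\lVert \Psi'\rVert_{\scrL}^2}{2}+\CE(\Psi)\Bigr]e^{ct}\,dt,
\end{equation*}
whose second term stays bounded as $c\to 0^+$ while the first tends to $-\infty$ because $a<0$. Consequently $(0,\varepsilon)\subseteq \CD$ for some $\varepsilon>0$, so $c_1\geq \varepsilon>0$ and $c_1\in \partial\CD\cap(0,+\infty)$ is a second boundary point distinct from $c(\CD)$.

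The last step is to apply Proposition~\ref{PROPOSITION_existence_bis} at both $c_1$ and $c(\CD)$, producing solutions $(c_1,\overline{\bV}_1)$ and $(c(\CD),\overline{\bV}_\star)$ with $\overline{\bV}_1\in Y_{\overline{T}_1}\cap \CA(\R)$, $\overline{\bV}_\star\in Y_{\overline{T}_\star}\cap \CA(\R)$ and $\bE_{c_1}(\overline{\bV}_1)=\bE_{c(\CD)}(\overline{\bV}_\star)=0$. By Lemma~\ref{LEMMA_YT} these profiles lie in $X\cap \CA(\R)$, matching the framework of Proposition~\ref{PROPOSITION-uniqueness}. The cross-inequalities required by that proposition are essentially automatic: since $c_1\notin \CD$ one has $\bb_{c_1,\overline{T}_\star}\geq 0$, and Lemma~\ref{LEMMA_well_defined} combined with $\overline{\bV}_\star\in Y_{\overline{T}_\star}$ gives $\bE_{c_1}(\overline{\bV}_\star)\in (-\infty,+\infty]$, so $\bE_{c_1}(\overline{\bV}_\star)\geq \bb_{c_1,\overline{T}_\star}\geq 0$ (the value $+\infty$ is harmless here); the symmetric inequality $\bE_{c(\CD)}(\overline{\bV}_1)\geq 0$ follows in the same way. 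Proposition~\ref{PROPOSITION-uniqueness} then forces $c_1=c(\CD)$, contradicting $c_1<c(\CD)$ and completing the argument.

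The only potentially delicate point is ensuring $c_1>0$, so that it falls within the scope of Proposition~\ref{PROPOSITION_existence_bis}; this is handled by the direct computation with $\Psi$ above, which reduces the matter to an algebraic limit. Everything else is bookkeeping combining the openness of $\CD$ with the two main propositions.
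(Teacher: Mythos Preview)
Your argument is correct and follows essentially the same strategy as the paper: show that $\partial\CD\cap(0,+\infty)$ consists of the single point $c(\CD)$ by applying Proposition~\ref{PROPOSITION_existence_bis} at two putative boundary points and then invoking Proposition~\ref{PROPOSITION-uniqueness}. The paper's version is marginally more direct---it takes an arbitrary $\overline{c}\in\partial\CD\cap(0,+\infty)$ and shows $\overline{c}=c(\CD)$, rather than constructing a specific second boundary point $c_1$ by contradiction---but the logical content is identical; your explicit $\Psi$ computation guaranteeing $c_1>0$ simply unpacks the ``$\CD\neq\emptyset$'' part of Lemma~\ref{LEMMA_set_CD}.
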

\begin{proof}[Proof of Corollary \ref{COROLLARY_CD}]
The idea is the same than in \cite{oliver-bonafoux-tw}. As before, we need to show that
\begin{equation}
\partial(\CD)\cap(0,+\infty)=\{ c(\CD)\}.
\end{equation}
Let $\overline{c} \in \partial(\CD)\cap(0,+\infty)$, we need to show that $\overline{c}=c(\CD)$ in order to finish the proof. Using Proposition \ref{PROPOSITION_existence_bis} we obtain $\bV^\CD$ and $\overline{\bV}$ in $Y \subset X$ such that $(c(\CD),\bV(\CD))$ and $(\overline{c},\overline{\bV})$ solve \eqref{abstract_equation}. Moreover,
\begin{equation}
\inf_{U \in Y}\bE_{c(\CD)}(U)=\bE_{c(\CD)}(\bV^\CD)=0=\bE_{\overline{c}}(\overline{\bV})=\inf_{U \in Y}\bE_{\overline{c}}(U)
\end{equation}
so that we also have
\begin{equation}
\bE_{c(\CD)}(\overline{\bV}) \geq 0 \mbox{ and } \bE_{\overline{c}}(\bV^\CD) \geq 0.
\end{equation}
Therefore, the requirements of Proposition \ref{PROPOSITION-uniqueness} are met, which means that $\overline{c}=c(\CD)$.
\end{proof}

\subsection{Proof of Theorem \ref{THEOREM-ABSTRACT} completed}

If  \ref{hyp_levelsets_bis} holds, using Proposition \ref{PROPOSITION_existence_bis} and Corollary \ref{COROLLARY_CD}, we can argue as in \cite{oliver-bonafoux-tw} to establish the existence of a solution solving \eqref{abstract_equation} with condition \eqref{abstract_bc_weak} and speed $c^\star=c(\CD)$. The statement regarding the uniqueness of the speed $c^\star$ follows from Proposition \ref{PROPOSITION-uniqueness}. Finally, we have that the exponential convergence \eqref{exponential_abstract} follows from \eqref{limit+_function0_weak} in Lemma \ref{LEMMA_limit+}. Thus, the proof of Theorem \ref{THEOREM-ABSTRACT} is completed.

\qed

\subsection{Asymptotic behavior of the constrained solutions}
We essentially recall the results given in \cite{oliver-bonafoux-tw} and adapt them to this setting. Recall first the following:
\begin{lemma}[\cite{oliver-bonafoux-tw}]\label{LEMMA_equipartition}
Assume that \ref{hyp_regularity} holds. Let $c>0$, $t_1 < t_2$ and $U \in \CA((t_1,t_2))$ such that
\begin{equation}\label{equation_equipartition}
U''-D_{\scrL}\CE(U)=-cU' \mbox{ in }(t_1,t_2).
\end{equation}
Then, we have the formula
\begin{equation}\label{equipartition_identity}
\forall t \in (t_1,t_2), \hspace{2mm} \frac{d}{dt}\left( \CE(U(t))-\frac{\lVert U'(t) \rVert_{\scrL}^2}{2} \right) = c\lVert U'(t) \rVert^2_{\scrL}.
\end{equation}
\end{lemma}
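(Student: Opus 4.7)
The plan is a direct chain-rule computation, carefully tracking which space each derivative lives in so that the regularity built into $\CA((t_1,t_2))$ is used in the right place. The identity is purely formal once differentiations are justified, so the main point is to verify that each step makes sense given the hypothesis $U \in \CA((t_1,t_2))$ from (\ref{CA_def}).

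First, I will differentiate $t \mapsto \CE(U(t))$. Since $U \in \CC^1_\loc((t_1,t_2),(\scrH,\lVert \cdot \rVert_{\scrH}))$ and, by \ref{hyp_smaller_space}, the functional $\CE$ is $\CC^1$ on $(\scrH,\lVert \cdot \rVert_{\scrH})$ with differential $D\CE$, the chain rule gives $\frac{d}{dt}\CE(U(t))=D\CE(U(t))(U'(t))$ for all $t \in (t_1,t_2)$. Next, because $U \in \CC^0_\loc((t_1,t_2),(\scrH',\lVert\cdot\rVert_{\scrH'}))$ we have $U(t) \in \scrH'$, and since $U'(t) \in \scrH$, I can apply identity (\ref{abstract_ipp}) to rewrite this as a pairing in $\scrL$:
\begin{equation}
\frac{d}{dt}\CE(U(t))=\langle D_\scrL\CE(U(t)), U'(t) \rangle_\scrL.
\end{equation}

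Second, I will differentiate $t \mapsto \frac{1}{2}\lVert U'(t) \rVert_\scrL^2$. Since $U \in \CC^2_\loc((t_1,t_2),\scrL)$, both $U'$ and $U''$ are well-defined $\scrL$-valued functions and standard differentiation of a Hilbert norm yields $\frac{d}{dt}\frac{\lVert U'(t)\rVert_\scrL^2}{2}=\langle U''(t),U'(t)\rangle_\scrL$.

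Finally, I substitute the ODE (\ref{equation_equipartition}), which reads $D_\scrL\CE(U(t))=U''(t)+cU'(t)$ in $\scrL$. Plugging this into the first computation yields
\begin{equation}
\frac{d}{dt}\CE(U(t))=\langle U''(t),U'(t)\rangle_\scrL + c\lVert U'(t)\rVert_\scrL^2,
\end{equation}
and subtracting the second computation gives exactly (\ref{equipartition_identity}). There is no real obstacle here beyond bookkeeping the three layers of regularity $\scrL \supset \scrH \supset \scrH'$ supplied by $\CA((t_1,t_2))$; the subtlest point is that the chain rule for $\CE \circ U$ requires $U$ to be $\scrH$-differentiable (not merely $\scrL$-differentiable), which is why the definition (\ref{CA_def}) explicitly builds in $\CC^1_\loc$-regularity into $\scrH$.
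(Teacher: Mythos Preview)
Your proof is correct and is precisely the standard direct computation one expects here; the paper itself does not reproduce the argument but simply recalls the lemma from \cite{oliver-bonafoux-tw}, where the proof follows the same chain-rule manipulation you describe. Your careful tracking of which regularity layer ($\scrL$, $\scrH$, $\scrH'$) is used at each step is exactly what justifies the formal computation in this abstract setting.
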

We have the following result:
\begin{lemma}\label{LEMMA_equipartition_pointwise}
Assume that \ref{hyp_compactness}, \ref{hyp_projections_inverse}, \ref{hyp_projections_H}, \ref{hyp_regularity} and \ref{hyp_levelsets_bis} hold. Let $\bU_{c,T}$ be a constrained solution given by Lemma \ref{LEMMA_bcT} and $t^-:= t^-(\bU_{c,T},\CE_{\max}^-)$ be as in \eqref{comparison_t-}. Then for all $t<t^-$ we have the inequality
\begin{equation}\label{equipartition-_inequality}
\frac{\lVert \bU_{c,T}'(t) \rVert_{\scrL}^2}{2} \leq \CE(\bU_{c,T}(t))-a.
\end{equation}
Similarly, it holds that for all $t>t^+$
\begin{equation}\label{equipartition+_inequality}
\CE(\bU_{c,T}(t)) \leq \frac{\lVert \bU_{c,T}'(t) \rVert_{\scrL}^2}{2},
\end{equation}
where $t^+:=t^+(\bU_{c,T},\CE_{\max}^+)$ is as in \eqref{comparison_t+}.
\end{lemma}
The proof goes exactly as \textit{Lemma 5.13} in \cite{oliver-bonafoux-tw}, with the obvious minor modifications. From Lemma \ref{LEMMA_equipartition_pointwise} it follows a convergence result at $-\infty$, which one also might prove by modifying from \cite{oliver-bonafoux-tw}:
\begin{proposition}\label{PROPOSITION_conv_sol_-}
Assume that \ref{hyp_compactness}, \ref{hyp_projections_inverse}, \ref{hyp_projections_H}, \ref{hyp_regularity} and \ref{hyp_levelsets_bis} hold. Let $c>0$ and $T \geq 1$. Assume moreover that $c<\gamma^-$, where $\gamma^-$ is defined in \eqref{gamma-}. Let $\bU_{c,T}$ be a constrained solution given by Lemma \ref{LEMMA_bcT}.  Then, there exists $\overline{M}^->0$  such that for all $\eps \in (0,\gamma^--c)$ and $t \in \R$ it holds
\begin{equation}\label{L1_CE_conv-}
\int_{-\infty}^{t}(\CE(\bU_{c,T}(s ))-a)e^{-\eps s}ds \leq \overline{M}^- e^{(\gamma^--c-\eps)t}.
\end{equation}
Furthermore, there exist $M^->0$ and $\bv_{c,T}^-\in \scrF^-$ such that for all $t \in \R$
\begin{equation}\label{solutions_limit-_norm}
\lVert \bU_{c,T}(t)-\bv_{c,T}^- \rVert_{\scrL}^2 \leq M^- e^{(\gamma^--c)t}.
\end{equation}
%\begin{equation}\label{solutions_limit-_derivative}
%\lim_{t \to -\infty} \lVert \bU_{c,T}'(t) \rVert_{\scrL}^2=0
%\end{equation}
%and
%\begin{equation}\label{solutions_limit-_potential}
%\lim_{t \to -\infty}\CE(\bU_{c,T}(t)) =a.
%\end{equation}
\end{proposition}
In order to close this section, we prove the following result (which in fact generalizes Theorem \ref{THEOREM_abstract_isolated}) for the case in which \ref{hyp_levelsets_bis} holds and $\scrF^-$ is a singleton:

\begin{lemma}\label{LEMMA_isolated}
Assume that \ref{hyp_compactness}, \ref{hyp_projections_inverse}, \ref{hyp_projections_H}, \ref{hyp_regularity} and \ref{hyp_levelsets_bis} hold. Let $c>0$ and $T \geq 1$. Assume moreover that $\scrF^-=\{\bv^-\}$. Let $\bV_{c,T}$ be a constrained solution given by Lemma \ref{LEMMA_bcT_solution}. Then, it holds
\begin{equation}
\lim_{t \to -\infty} \lVert \bV_{c,T}(t) -\bv^- \rVert_{\scrL} =0.
\end{equation}
\end{lemma}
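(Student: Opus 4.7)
My plan is to show that $\ell := \lim_{t \to -\infty} \CE(\bV_{c,T}(t))$ equals $a$; the conclusion then follows at once. By Corollary \ref{COROLLARY_comparison_bis}, $\CE(\bV_{c,T}(\cdot))$ is monotone on $(-\infty, t_0^- + \delta_0^-]$, so the limit $\ell$ exists in $[a, \alpha_-]$. Moreover, for $t$ in this tail, part 1 of \ref{hyp_levelsets_bis} places $\bV_{c,T}(t)$ in $\scrF^-_{r_0^-/2}$, and the coercivity (\ref{coercivityL}) combined with $\scrF^- = \{\bv^-\}$ yields $\lVert \bV_{c,T}(t) - \bv^-\rVert_{\scrL}^2 \leq C^-(\CE(\bV_{c,T}(t)) - a)$, which tends to $0$ as soon as $\ell = a$. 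Corollary \ref{COROLLARY_bcT_solution} further ensures that $\bV_{c,T}$ satisfies the Euler-Lagrange equation throughout $(-\infty, t_0^- + \delta_{Y,c,T})$; this is the key ingredient of the argument.

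I would then argue by contradiction: assume $\ell > a$ and set $\alpha := (\ell + a)/2 \in (a, \alpha_-]$. Regardless of which of the two monotonicity directions occurs, one can choose $T_* \leq t_0^- + \delta_{Y,c,T}$ so that $\CE(\bV_{c,T}(s)) \geq \alpha$ for every $s \leq T_*$ (in the non-decreasing case $\CE \geq \ell > \alpha$ everywhere, while in the non-increasing case $\CE(\bV_{c,T}(s)) \to \ell$ from below, so any sufficiently negative $T_*$ works). Introducing the Lyapunov-type function $\Psi(t) := \tfrac{1}{2}\lVert \bV_{c,T}(t) - \bv^-\rVert_{\scrL}^2$, the equation and the compatibility identity (\ref{abstract_ipp}) give
\[
\Psi''(t) + c\Psi'(t) = \lVert \bV_{c,T}'(t)\rVert_{\scrL}^2 + \frac{d}{d\lambda}\CE\bigl(\bv^- + \lambda(\bV_{c,T}(t) - \bv^-)\bigr)\bigg|_{\lambda = 1}.
\]
Since $\bV_{c,T}(t) \in \CE^{\alpha_0,-}$ with $\CE(\bV_{c,T}(t)) \geq \alpha$ on $(-\infty, T_*]$, part 3 of \ref{hyp_levelsets_bis} (applied at $\theta = 1$) bounds the directional derivative from below by $\omega(\alpha) > 0$, so $\Psi''(t) + c\Psi'(t) \geq \omega(\alpha)$ on this interval.

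Multiplying by $e^{ct}$ and integrating from $s_n$ to $t \leq T_*$ then yields $e^{ct}\Psi'(t) - e^{cs_n}\Psi'(s_n) \geq \tfrac{\omega(\alpha)}{c}(e^{ct} - e^{cs_n})$. The decisive step is to select a sequence $s_n \to -\infty$ along which the boundary contribution $e^{cs_n}\Psi'(s_n)$ vanishes. Finiteness of $\bE_c(\bV_{c,T}) = \bb_{c,T}$ (Lemmas \ref{LEMMA_PSI_YT} and \ref{LEMMA_bcT}) combined with $\CE \geq a$ forces $\int_{\R} \lVert \bV_{c,T}'(s)\rVert_{\scrL}^2 e^{cs}\, ds < +\infty$, hence $\liminf_{s \to -\infty}\lVert \bV_{c,T}'(s)\rVert_{\scrL}^2 e^{cs} = 0$; together with the $\scrL$-boundedness of $\bV_{c,T}(s) - \bv^-$ supplied by coercivity, this gives a sequence along which $|\Psi'(s_n)| e^{cs_n} \to 0$. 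Passing to the limit produces $\Psi'(t) \geq \omega(\alpha)/c > 0$ on the whole tail $(-\infty, T_*]$, and a further integration contradicts the uniform bound $\Psi(t) \leq C^-(\alpha_- - a)/2$. Hence $\ell = a$.

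The main obstacle is the absence of a uniform pointwise bound on $\lVert \bV_{c,T}'(t)\rVert_{\scrL}$ as $t \to -\infty$, which in general cannot be derived from the ODE together with the $\scrL$-boundedness of the trajectory: near the stationary point $(\bv^-, 0)$ of the first-order reduction, the velocity may grow exponentially in unstable directions. The device of extracting a ``good sequence'' from the weighted integrability of $\lVert \bV_{c,T}'\rVert_{\scrL}^2 e^{cs}$ sidesteps this difficulty and is what allows the argument to apply for every $c > 0$, by contrast with the smallness assumption $c < \gamma^-$ required in Lemma \ref{LEMMA_conv_sol_-} for the stronger $\scrH$-convergence.
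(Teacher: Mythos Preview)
Your argument is correct. The core is the same as in the paper: both proofs introduce the Lyapunov function $\Psi(t)=\tfrac12\lVert \bV_{c,T}(t)-\bv^-\rVert_\scrL^2$ (the paper calls it $\rho_{c,T}=2\Psi$), use the equation together with part 3 of \ref{hyp_levelsets_bis} to obtain a strict differential inequality $\Psi''+c\Psi'\geq \omega>0$ on a left tail, and derive a contradiction with the uniform boundedness of $\Psi$.

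The difference lies in how the contradiction is reached and in what feeds the strict inequality. The paper first derives the weak inequality $\rho''+c\rho'\geq 0$ to force monotonicity of $\rho$, deduces the limit $l=\lim_{t\to-\infty}\rho(t)$, and then, assuming $l\neq 0$, uses the coercivity (\ref{coercivityL}) to bound $\CE(\bV_{c,T})$ from below and obtain $\rho''+c\rho'\geq 2\omega(\tilde l)$; the contradiction then proceeds by a case split according to whether $\rho$ is non-increasing or non-decreasing, each case handled by one or two direct integrations. You instead exploit the monotonicity of $\CE(\bV_{c,T}(\cdot))$ already supplied by Corollary \ref{COROLLARY_comparison_bis}, which gives the limit $\ell$ and the lower bound $\CE\geq\alpha$ without passing through $\rho$; and you avoid the case split altogether by integrating $(e^{ct}\Psi')'\geq\omega(\alpha)e^{ct}$ and killing the boundary term at $-\infty$ along a sequence extracted from the finiteness of $\int_\R\lVert\bV_{c,T}'\rVert_\scrL^2 e^{cs}\,ds$. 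Your route is a bit more economical and makes transparent why no smallness on $c$ is needed here, in contrast with Lemma \ref{LEMMA_conv_sol_-}.
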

\begin{proof}
We essentially argue similarly to \cite{alikakos-fusco-smyrnelis} and \cite{alikakos-katzourakis}, using the fact that $\scrF^-=\{\bv^-\}$. In such a case, \eqref{local_monotonicity} in assumption \ref{hyp_levelsets_bis} implies that
\begin{equation}\label{isolated_local_monotonicity_1}
\forall v \in \CE^{\alpha_0,-}, \exists \lambda(v) >1 , \forall \theta \in (0,\lambda(v)), \hspace{2mm} \frac{d}{d\lambda}(\CE(\bv^-+\lambda(v-\bv^-)) \geq 0.
\end{equation}
Corollary \ref{COROLLARY_bcT_solution} implies in particular the existence of $t_0^- \in \R$ such that
\begin{equation}\label{isolated_alpha-}
\forall t \leq t_0^-, \hspace{2mm} \CE(\bV_{c,T}(t)) < \alpha_-,
\end{equation}
\begin{equation}
\bV \in \CA((-\infty,t_0^-))
\end{equation}
and
\begin{equation}\label{isolated_equation}
\bV''_{c,T}-D_\scrL\CE(\bV_{c,T})=-c\bV_{c,T}' \mbox{ in } (-\infty,t_0^-).
\end{equation}
The combination of \eqref{isolated_local_monotonicity_1} and \eqref{isolated_alpha-} gives that
\begin{equation}
\forall t \leq t_0^-, \exists \lambda(t)>1, \forall \theta \in (0,\lambda(t)), \hspace{2mm} \frac{d}{d\lambda}(\CE(\bv^-+\lambda(\bV_{c,T}(t)-\bv^-)) \geq 0
\end{equation}
which, using that $\bV_{c,T} \in \CA((-\infty,t_0))$ and \eqref{abstract_ipp} in \ref{hyp_smaller_space}, implies
\begin{equation}\label{isolated_omega}
\forall t < t_0^-, \hspace{2mm} \langle D_{\scrL}\CE(\bV_{c,T}(t)), \bV_{c,T}(t)-\bv^- \rangle_{\scrL} \geq 0.
\end{equation}
Define now the function
\begin{equation}
\rho_{c,T}: t \in (-\infty,t_0^-) \to \lVert \bV_{c,T}(t)-\bv^- \rVert_{\scrL}^2 \in \R.
\end{equation}
Notice that \eqref{isolated_alpha-} along with \ref{hyp_levelsets_bis} implies that
\begin{equation}\label{rhocT_bound}
\forall t \leq t_0^-, \hspace{2mm} \rho_{c,T}(t) \leq \frac{(r_0^-)^2}{2}.
\end{equation}
The fact that $\bV_{c,T} \in \CA((-\infty,t_0^-))$ implies that $\rho \in \CC^2((-\infty,t_0^-))$. We have that
\begin{equation}\label{rhocT'}
\forall t < t_0^-, \hspace{2mm}\rho_{c,T}'(t)= 2\langle \bV_{c,T}(t)-\bv^-,\bV_{c,T}'(t) \rangle_{\scrL}
\end{equation}
and
\begin{equation}\label{rhocT''}
\forall t < t_0^-, \hspace{2mm} \rho_{c,T}''(t) = 2\lVert \bV_{c,T}'(t) \rVert_{\scrL}^2+2 \langle \bV_{c,T}(t)-\bv^-,\bV_{c,T}''(t) \rangle_{\scrL}
\end{equation}
so that, using \eqref{isolated_equation}
\begin{align}
\forall t < t_0^-, \hspace{2mm}  \rho_{c,T}''(t)& =  2\lVert \bV_{c,T}'(t) \rVert_{\scrL}^2+2\langle D_{\scrL}\CE(\bV_{c,T}(t)), \bV_{c,T}(t)-\bv^- \rangle_{\scrL}\\&-2c\langle \bV_{c,T}(t)-\bv^-,\bV_{c,T}'(t) \rangle_{\scrL},
\end{align}
which, by \eqref{isolated_omega} and \eqref{rhocT'} gives the inequality
\begin{equation}\label{rhocT_equation}
\forall t < t_0^-, \hspace{2mm} \rho_{c,T}''(t)+c\rho'(t) \geq 0.
\end{equation}
Inequality \eqref{rhocT_equation} implies that $\rho_{c,T}$ does not possess any strict local maximum. Therefore, we can find $\tilde{t}_1 < t_0^-$ such that $\rho_{c,T}$ is monotone in $(-\infty,\tilde{t}_1)$. In combination with the bound \eqref{rhocT_bound}, monotony implies the existence of $l \in [0,(r_0^-)^2/2]$ such that
\begin{equation}\label{isolated_l}
\lim_{t \to -\infty} \rho_{c,T}(t)=l.
\end{equation}
In order to conclude the proof, we need to show that $l=0$. Assume by contradiction that $l \not = 0$. Decreasing the value of $\tilde{t}_1$ if necessary, we find $\tilde{l}>0$ such that for all $t \leq \tilde{t}_1$ it holds $\CE(\bV_{c,T}(t)) \geq \tilde{l}$. Therefore, using \eqref{local_monotonicity} in \ref{hyp_levelsets_bis}, we have that
\begin{equation}
\forall t \leq \tilde{t}_1, \exists \delta(t)>0, \forall \theta \in (1-\delta(t),1+\delta(t)), \hspace{2mm} \frac{d}{d\lambda}(\CE(\bv^-+(\bV_{c,T}(t)-\bv^-))) \geq \omega(\tilde{l}),
\end{equation}
with $\omega(\tilde{l})>0$. In particular, it holds
\begin{equation}\label{isolated_local_monotonicity_2}
\forall t \leq \tilde{t}_1, \hspace{2mm} \langle D_\scrL\CE(\bV_{c,T}(t)), \bV_{c,T}(t)-\bv^- \rangle_{\scrL} \geq \omega(\tilde{l}),
\end{equation}
which by \eqref{rhocT'} and \eqref{rhocT''} gives
\begin{equation}\label{rhocT_omega}
\forall t \leq \tilde{t}_1, \hspace{2mm} \rho_{c,T}''(t)+c\rho_{c,T}'(t) \geq 2\omega(\tilde{l}).
\end{equation}

Suppose first that $\rho_{c,T}$ is non increasing in $(-\infty,\tilde{t}_1)$, meaning that for all $t \leq \tilde{t}_1$, $\rho_{c,T}(\tilde{t}_1) \leq \rho_{c,T}(t)$ and $\rho_{c,T}'(\tilde{t}_1) \leq 0$. Moreover, integrating \eqref{rhocT_omega} in $(s,\tilde{t}_1)$ for $s < \tilde{t}_1$, we obtain
\begin{equation}\label{rhocT_equation_2}
\forall s < \tilde{t}_1, \hspace{2mm} \rho_{c,T}'(\tilde{t}_1)-\rho_{c,T}'(s)+c(\rho_{c,T}(\tilde{t}_1)-\rho_{c,T}(s)) \geq 2\omega(\tilde{l})(\tilde{t}_1-s).
\end{equation}
which becomes
\begin{equation}
\forall s < \tilde{t}_1, \hspace{2mm} 2\omega(\tilde{l})(s-\tilde{t}_1) \geq \rho_{c,T}'(s)
\end{equation}
meaning that $\lim_{s \to -\infty}\rho_{c,T}'(s) =-\infty$, which contradicts \eqref{isolated_l}. Therefore, $\rho_{c,T}$ must be non decreasing in $(-\infty,\tilde{t}_1]$, which means that for all $t \leq \tilde{t}_1$, $\rho_{c,T}'(t) \geq 0$. We now fix $s< t <\tilde{t}_1$ and integrate \eqref{rhocT_omega} in $[s,t]$. We obtain
\begin{equation}
\rho_{c,T}'(t)-\rho_{c,T}'(s)+c(\rho_{c,T}(t)-\rho_{c,T}(s)) \geq 2\omega(\tilde{l})(t-s),
\end{equation}
which, since $\rho_{c,T}$ is non decreasing and we have the bound \eqref{rhocT_bound}, gives
\begin{equation}
\rho_{c,T}'(t) \geq 2\omega(\tilde{l})(t-s)-c\frac{(r_0^-)^2}{2}
\end{equation}
so that, by integrating again
\begin{equation}
\forall s < \tilde{t}_2, \hspace{2mm} \rho_{c,T}(\tilde{t}_1)-\rho_{c,T}(s) \geq \omega(\tilde{t})(\tilde{t}_1-s)^2-c\frac{(r_0^-)^2}{2}(\tilde{t}_1-s)
\end{equation}
which gives a contradiction by passing to the limit $s \to -\infty$, since the right-hand side term tends in that case to $+\infty$. Therefore, we must have that the limit $l$ defined in \eqref{isolated_l} equals 0, which concludes the proof.
\end{proof}
\begin{remark}
The arguments in Lemma \ref{LEMMA_isolated} do not seem to apply to the case in which $\scrF^-$ is no longer a singleton. Indeed, in such an event, the function $\rho_{c,T}$ would have to be defined as
\begin{equation}
\rho_{c,T}: t \in (-\infty,t_0^-) \to \lVert \bV_{c,T}(t)-P^-(\bV_{c,T}(t)) \rVert_{\scrL}^2
\end{equation}
so that, when computing its derivatives, the (possibly nonzero) differential of $DP^-$ appears and with it some terms which do not seem to have necessarily the right sign. Recall also that the limit property given by Lemma \ref{LEMMA_isolated} does not allow, by itself, to obtain information on $\CE(\bV_{c,T}(\cdot))$ at $-\infty$, since $\CE$ is only lower semicontinuous with respect to $\scrL$-convergence.
\end{remark}
\subsection{Proof of Theorem  \ref{THEOREM_abstract_isolated} completed}
It is a particular case of Lemma \ref{LEMMA_isolated}. Indeed, the solution $(c^\star,\bU)$ given by Theorem \ref{THEOREM-ABSTRACT} is given by Proposition \ref{PROPOSITION_existence_bis}, so in particular is a constrained solution in $Y_T$ for suitable $T \geq 1$ and Lemma \ref{LEMMA_isolated} applies.

\qed
\subsection{Proof of Theorem \ref{THEOREM_abstract_bc} completed}
If  \ref{hyp_levelsets_bis} holds, then using Proposition \ref{PROPOSITION_existence_bis}, inequality \eqref{sup_D_bound} in Lemma \ref{LEMMA_set_CD} implies that $c^\star < \gamma^-$, where we have also used \eqref{a_convergence_bis} in \ref{hyp_convergence}. The proof then follows by using identity \eqref{solutions_limit-_norm} in Proposition \ref{PROPOSITION_conv_sol_-}.

\qed

\subsection{On the proof of Theorem \ref{THEOREM_abstract_speed}}

The proof works exactly as the proof of \textit{Theorem 7} in \cite{oliver-bonafoux-tw}, so we skip it.

\section{Proofs of the main results}
The main results of this paper are obtained by the following, which links the main setting with the abstract setting:
\begin{lemma}\label{LEMMA_implication_W}
Assume that \ref{asu_unbalancedW}, \ref{asu_projection_inverse} and \ref{asu_conv_mon} hold. Let
\begin{equation}
\scrL=\scrH=\tilde{\scrH}=\R^k
\end{equation}
and $\CE=W$. Then, \ref{hyp_unbalanced}, \ref{hyp_smaller_space}, \ref{hyp_compactness}, \ref{hyp_projections_inverse}, \ref{hyp_projections_H}, \ref{hyp_regularity} and \ref{hyp_levelsets_bis} hold. Moreover, if \ref{asu_bc} holds, then \ref{hyp_convergence} holds.
\end{lemma}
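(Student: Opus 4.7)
The proof is essentially a dictionary exercise between the concrete assumptions \ref{asu_unbalancedW}--\ref{asu_conv_mon} on $W: \R^k \to \R$ and the abstract assumptions \ref{hyp_unbalanced}--\ref{hyp_convergence}, made under the identification $\scrL = \scrH = \scrH' = \R^k$ (with the Euclidean inner product), $\CE = W$, $\scrF^\pm = \CA^\pm$, $a = \Fh$, $r_0^\pm = \rho_0^\pm$, and $\alpha_- = h_-$, $\alpha_0 = h_0$. The first observation to make, before verifying anything, is that \eqref{W_R0} implies $W$ is coercive on $\R^k$: integrating $\langle \nabla W(u), u \rangle \geq c_0|u|^2$ along rays shows $W(u) \to +\infty$ as $|u|\to\infty$. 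Consequently, every sublevel set $W^h$ is bounded; in particular $\CA^\pm \subset B_{R_0}$ (since points with $|u|\geq R_0$ are non-critical by \eqref{W_R0}) and $W^{h_0,-}$ is bounded.

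With this in hand, \ref{hyp_unbalanced} translates verbatim from \ref{asu_unbalancedW}, taking the abstract constants $C^\pm > 1$ to be the maxima of those in \eqref{coercivity_W} and $1$. Hypothesis \ref{hyp_smaller_space} is trivial since all three Hilbert spaces coincide with $\R^k$: the coercivity inequality reduces to $0 \leq C_c(W(v)-\Fh)$, the differential $D\CE=\nabla W$ and $D_\scrL\CE = \nabla W$ are automatic, and \eqref{CE_R_est} follows from continuity of $\nabla W$ on compact sets. Hypothesis \ref{hyp_compactness} is immediate by Bolzano--Weierstrass once one notes $\CA^\pm$ are closed (being level sets) and bounded. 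Hypothesis \ref{hyp_projections_inverse} is a direct translation of \ref{asu_projection_inverse} (and in fact option 1 is automatic given the boundedness of $\CA^\pm$). Hypothesis \ref{hyp_levelsets_bis} transcribes items 1--3 of \ref{asu_conv_mon} one-for-one, with $\omega = \sigma$ and $\omega(\alpha) = \sigma(h)$; the final global bound property holds with $R := \mathrm{diam}(W^{h_0,-}) + \mathrm{diam}(\CA^-) < +\infty$ by the coercivity observation above.

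For \ref{hyp_projections_H}, the inequality \eqref{H_difference_projections} is trivial since $P^\pm_\scrH = P^\pm$, and the lower bound in \eqref{H_local_est} is exactly \eqref{coercivity_W}. The upper bound $W(v) - \min\{\pm(-\Fh),0\} \leq (C^\pm)^2|v - P^\pm(v)|^2$ follows from a Taylor expansion at $P^\pm(v) \in \CA^\pm$: since $\nabla W(P^\pm(v)) = 0$, one has $W(v) - W(P^\pm(v)) \leq \tfrac12 \|D^2W\|_{L^\infty(\CA^\pm_{\rho_0^\pm})}|v-P^\pm(v)|^2$, and $\|D^2W\|$ is bounded on the compact set $\CA^\pm_{\rho_0^\pm}$; enlarging $C^\pm$ if necessary absorbs this Taylor constant. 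The local bound \eqref{H_local_bound} follows by choosing $\beta^\pm(r^\pm)$ small enough that \eqref{coercivity_W} forces $\dist(v,\CA^\pm) \leq r^\pm$. For \ref{hyp_regularity} one sets $\FP$ to be the radial retraction onto $\overline{B_{R_0}}$: identity on $\overline{B_{R_0}}$ and $v \mapsto R_0 v/|v|$ outside. This map is $1$-Lipschitz, fixes $\CA^\pm$, and strictly decreases $W$ on $\{|v|>R_0\}$ by \eqref{W_R0}. The regularity and asymptotic statements for local minimizers $\bW$ are then standard: bootstrap from the Euler--Lagrange equation gives $\bW \in C^2_\loc$, and the limits at $\pm\infty$ follow from classical ODE conservation arguments applied to the energy $\tfrac12|\bW'|^2 - W(\bW)$ along with the coercivity of $W$.

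For the final assertion: if \ref{asu_bc}(2) holds, a direct computation shows that the constant $\gamma^-$ defined by \eqref{gamma-}--\eqref{sfC} equals the constant $\eta$ defined in \ref{asu_bc}, and $d_{\alpha_0}$ as in \eqref{d_alpha_0} equals the constant $d$ of \ref{asu_bc}; hence \eqref{a_convergence_bis} is exactly the inequality in \ref{asu_bc}(2). If instead \ref{asu_bc}(1) holds ($\CA^-$ is a singleton), no additional verification is needed for the corresponding main-text theorem, since Theorem \ref{THEOREM_W_bc} can be obtained in that case by Theorem \ref{THEOREM_abstract_isolated}, which does not require \ref{hyp_convergence}. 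The main technical point is \ref{hyp_projections_H}, where the two-sided estimate \eqref{H_local_est} must use a single constant $C^\pm$; one handles this by taking $C^\pm$ in the abstract setup to be the maximum of the coercivity constant of \ref{asu_unbalancedW}, the Taylor constant $\tfrac12\|D^2W\|_{L^\infty(\CA^\pm_{\rho_0^\pm})}^{1/2}$, and $1$. All other verifications are routine translations exploiting that the Euclidean norm makes $\scrL = \scrH = \scrH'$.
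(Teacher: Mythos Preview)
Your approach is the same as the paper's---a direct dictionary translation---but far more detailed; the paper's own proof labels most verifications ``tautological'' or ``straightforward'' and defers the construction of $\FP$ to \cite{oliver-bonafoux-tw}. One small internal inconsistency to watch: you (correctly) observe that the upper bound in \eqref{H_local_est} may require enlarging $C^\pm$ beyond the constant of \ref{asu_unbalancedW}, yet you later assert $\gamma^- = \eta$, which needs the abstract and concrete $C^-$ to coincide. The clean fix is to note that \eqref{coercivity_W} is preserved under enlargement of $C^\pm$, so one may without loss of generality take the concrete $C^\pm$ large enough from the outset (so that $(C^\pm)^2 \geq \tfrac12\lVert D^2W\rVert_{L^\infty(\CA^\pm_{\rho_0^\pm})}$) and read the $C^-$ appearing in \ref{asu_bc} as this enlarged constant. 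Finally, your observation that case~1 of \ref{asu_bc} does not literally yield \ref{hyp_convergence}, and that Theorem~\ref{THEOREM_W_bc} should in that case go through Theorem~\ref{THEOREM_abstract_isolated} instead, is sharper than the paper's one-line claim and matches exactly how the lemma is actually used in the proof of Theorem~\ref{THEOREM_W_bc}.
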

\begin{proof}
The proof is straightforward. Indeed, assumption \ref{hyp_unbalanced} follows from \ref{asu_unbalancedW}. Assumption \ref{hyp_smaller_space} becomes tautological because $\scrL=\scrH=\tilde{\scrH}=\R^k$. Assumption \ref{hyp_compactness} is straightforward. Assumption \ref{hyp_projections_inverse} corresponds directly to \ref{asu_projection_inverse}. Assumption \ref{hyp_projections_H} is also tautological. Regarding \ref{hyp_regularity}, the existence of the map $\FP$ follows from \eqref{W_R0}, see the proof of \textit{Lemma 2.3} in \cite{oliver-bonafoux-tw}. The rest of the statement follows from elliptic regularity results or it is tautological. Finally, we clearly have that \ref{hyp_convergence} holds if \ref{asu_bc} holds.
\end{proof}

Notice that most of the strength of the abstract results will not be used as we choose all the Hilbert spaces equal to $\R^k$. This also makes all the 
\subsection{Proof of Theorem  \ref{THEOREM_W_main} completed}
We apply Theorem \ref{THEOREM-ABSTRACT} with the choice of objects given by Lemma \ref{LEMMA_implication_W}. It is clear that \eqref{abstract_equation} reads now as \eqref{PROFILE_eq_Thm} and \eqref{abstract_bc_weak} reads as \eqref{conditions_infinity}, which establishes the existence part as we also have that $\CA(\R)$ becomes a subset of $\CC^2_{\loc}(\R,\R^k)$ by its definition in \eqref{CA_def}. Due to the fact that $X=\CS$, $Y=\overline{\CS}$ (with $X$ as in \eqref{X} and $Y$ as in \eqref{Y}) the uniqueness statement of Theorem \ref{THEOREM_W_main} follows directly from that in Theorem \ref{THEOREM-ABSTRACT}. Finally, it is clear that the exponential convergence in Theorem \ref{THEOREM-ABSTRACT} writes as that in Theorem \ref{THEOREM_W_main}, which concludes the proof.
\qed
\subsection{Proof of Theorem \ref{THEOREM_W_bc} completed}
We apply Theorems \ref{THEOREM_abstract_isolated} and \ref{THEOREM_abstract_bc} with the choice of objects given by Lemma \ref{LEMMA_implication_W}. Then, we have that the case 1. in \ref{asu_bc} corresponds to the assumptions of Theorem \ref{THEOREM_abstract_isolated} and case 2. in \ref{asu_bc} is the abstract assumption \ref{hyp_convergence}, required in Theorem \ref{THEOREM_abstract_bc}. Therefore, Theorem \ref{THEOREM_W_bc} follows.
\qed 
\subsection{Proof of Theorem \ref{THEOREM_W_speed} completed}
We apply Theorem \ref{THEOREM_abstract_speed} with the choice of objects given by Lemma \ref{LEMMA_implication_W}. It is then clear that Theorem \ref{THEOREM_W_speed} holds, as recall that $Y=\overline{\CS}$.
\qed
\printbibliography

\iffalse

\fi
\end{document}